\setlist{nosep}
\setlist{leftmargin=*}
\definecolor{MyBlue}{cmyk}{1,0.13,0,0.63}
\definecolor{MyGreen}{cmyk}{0.91,0,0.88,0.52}
\newcommand{\mylinkcolor}{MyBlue}
\newcommand{\mycitecolor}{MyGreen}
\newcommand{\myurlcolor}{webbrown}
\def\@endtheorem{\endtrivlist}
\theoremstyle{plain}
\newtheorem{thm}{Theorem}[section]
\newtheorem{lem}[thm]{Lemma}
\newtheorem{prop}[thm]{Proposition}
\newtheorem{coro}[thm]{Corollary}
\theoremstyle{definition}
\newtheorem{defn}[thm]{Definition}
\newtheorem{remark}[thm]{Remark}
\newtheorem{assumption}[thm]{Assumption}
\newtheorem{example}[thm]{Example}
\newtheoremstyle{note}
{3pt}
{3pt}
{\bfseries}
{\parindent}
{\bfseries\itshape}
{:}
{.5em}
{}
\theoremstyle{note}
\newtheorem*{Question}{Question}
\newtheorem*{Problem}{Problem}
\newtheorem*{Note}{Note}
\renewcommand{\eqref}[1]{\labelcref{#1}}
\crefname{thm}{Theorem}{Theorems}
\crefname{lem}{Lemma}{Lemmas}
\crefname{prop}{Proposition}{Propositions}
\crefname{coro}{Corollary}{Corollaries}
\crefname{defn}{Definition}{Definitions}
\crefname{example}{Example}{Examples}
\crefname{remark}{Remark}{Remarks}
\newcommand{\sD}{\slashed{D}}
\newcommand{\N}{\mathbb{N}}
\newcommand{\Z}{\mathbb{Z}}
\newcommand{\R}{\mathbb{R}}
\newcommand{\C}{\mathbb{C}}
\newcommand{\mH}{\mathcal{H}}
\newcommand{\D}{\mathcal{D}}
\newcommand{\A}{\mathcal{A}}
\newcommand{\E}{\mathcal{E}}
\newcommand{\mW}{\mathcal{W}}
\newcommand{\mL}{\mathcal{L}}
\newcommand{\mJ}{\mathcal{J}}
\newcommand{\mS}{\mathcal{S}}
\newcommand{\mK}{\mathcal{K}}
\DeclareMathOperator{\End}{End}
\DeclareMathOperator{\Dom}{Dom}
\DeclareMathOperator{\dist}{dist}
\newcommand{\bundlefont}[1]{{\mathtt{#1}}}
\newcommand{\bS}{\bundlefont{S}}
\newcommand{\bE}{\bundlefont{E}}
\newcommand{\til}[1]{\widetilde{#1}}
\newcommand{\la}{\langle}
\newcommand{\ra}{\rangle}
\newcommand{\op}{\textnormal{op}}
\newcommand{\into}{\hookrightarrow}
\newcommand{\Cliff}{{\mathrm{Cl}}}
\newcommand{\CCliff}{{\mathbb{C}\mathrm{l}}}
\newcommand{\dvol}{\textnormal{dvol}}
\renewcommand{\Re}{\mathop{\textnormal{Re}}}
\renewcommand{\Im}{\mathop{\textnormal{Im}}}
\renewcommand{\bar}[1]{\overline{#1}}
\newcommand{\mattwo}[4]{
  \left(\!\!\!\begin{array}{c@{~}c}#1&#2\\#3&#4\\\end{array}\!\!\!\right)
}
\title{Indefinite Kasparov modules and pseudo-Riemannian manifolds}
\author{
Koen van den Dungen$^{1,2}$\footnote{Email: \texttt{koen.vandendungen@anu.edu.au}}$\phantom{x}$
and Adam Rennie$^2$\footnote{Email: \texttt{renniea@uow.edu.au}}\\[4mm]
{\normalsize ${}^1$Mathematical Sciences Institute}, 
{\normalsize Australian National University}\\
{\normalsize Canberra, ACT 0200, Australia}\\[2mm]
{\normalsize ${}^2$School of Mathematics and Applied Statistics},
{\normalsize University of Wollongong}\\
{\normalsize Wollongong, NSW 2522, Australia}\\[2mm]
}
\begin{document}
\date{}
\maketitle

\begin{abstract}
\noindent
We present a definition of indefinite Kasparov modules, a generalisation 
of unbounded Kasparov
modules modelling non-symmetric and non-elliptic (e.g.\ hyperbolic) operators. 
Our main theorem shows that to each indefinite Kasparov module 
we can associate a pair of (genuine) Kasparov modules, and that this process is reversible. 
We present three examples of our framework:  the 
Dirac operator on a pseudo-Riemannian spin 
manifold (i.e.\ a manifold with an indefinite metric);
the harmonic oscillator; and the construction via the Kasparov product of an 
indefinite spectral triple from a family of spectral triples. This last construction corresponds 
to a foliation of a globally hyperbolic spacetime by spacelike hypersurfaces. 

\vspace{\baselineskip}\noindent
\emph{Keywords}:  $KK$-theory; Lorentzian manifolds; noncommutative geometry.

\noindent
\emph{Mathematics Subject Classification 2010}: 19K35, 53C50, 58B34. 
\end{abstract}


\parindent=0.0in
\setlength{\parskip}{3pt plus 3pt minus 1pt}

\section{Introduction}

Both Connes' noncommutative geometry \cite{Connes94} and Kasparov's 
$KK$-theory \cite{Kas80b,BJ83} deal with noncommutative 
generalisations of elliptic, self-adjoint differential operators. 
As such, these frameworks are particularly suited to describe 
{\em Riemannian} manifolds. In this article we aim to extend these 
frameworks to allow for non-elliptic and non-symmetric operators, 
and in particular (normally) hyperbolic operators. Our motivating 
example is the Dirac operator on a {\em pseudo-Riemannian} 
manifold, i.e.\ a manifold equipped with an \emph{indefinite} 
(but non-degenerate) metric. It is precisely this example 
that has inspired the terminology for the \emph{indefinite} 
Kasparov modules we introduce in \cref{defn:indef_Kasp_mod}.

Our definition of indefinite Kasparov modules is a 
generalisation of the usual definition of unbounded 
Kasparov modules \cite{BJ83}. One of our main 
goals is to make sure that this generalised definition 
still allows us to remain in touch with all the usual tools of 
$KK$-theory \cite{Kas80b}. If $(\A,E_B,\D)$ is an indefinite 
unbounded Kasparov module, we can construct from 
the (typically non-symmetric) operator $\D$ two symmetric operators given by
$$
\D_\pm := \Re\D \pm \Im\D = \frac12 (\D+\D^*) \mp \frac i2 (\D-\D^*) .
$$
We then want to make sure that these operators yield two 
unbounded Kasparov modules $(\A,E_B,\D_\pm)$, 
and the main challenge here is to prove self-adjointness for $\D_\pm$. 

This article continues in the spirit of our previous 
paper \cite{vdDPR13}, where we defined \emph{pseudo-Riemannian spectral triples} 
$(\A,\mH,\D)$ as a generalisation of spectral triples, 
and we showed that the operators $\D_\pm$ defined 
as above yield spectral triples.  Although the motivation 
for the present article is the same, there are nonetheless 
several significant differences. 

First, we work more 
generally with Kasparov modules instead of spectral triples. 
Second, while the definition of pseudo-Riemannian spectral 
triples requires assumptions on the \emph{second-order} 
operators $\D\D^*+\D^*\D$ and $\D^2-\D^{*2}$, the definition 
of indefinite Kasparov modules focuses more on \emph{first-order} 
operators (namely $\D$, $\D^*$, $\Re\D$, and $\Im\D$), 
which is more natural. Third, the definition of indefinite 
Kasparov modules has the advantage that it does not 
require any smoothness properties. And fourth, it allows to 
\emph{reverse} the procedure $\D \mapsto \D_\pm$, which 
means that we can characterise all pairs of unbounded Kasparov 
modules that can be obtained from an indefinite Kasparov module in this way. 

As mentioned above, the main technical challenge 
is to obtain self-adjointness for $\D_\pm$. In \cite{vdDPR13}, 
this is achieved by assuming that $\la\D\ra^2 := (\Re\D)^2+(\Im\D)^2$ 
is self-adjoint, and that the anti-commutator $\{\Re\D,\Im\D\}$ 
is `suitably bounded' relative to $\la\D\ra^2$. In this article, 
we prefer to avoid assumptions on the second-order operator 
$\la\D\ra^2$. Instead, we now impose the condition that the 
real and imaginary parts of $\D$ \emph{almost anti-commute}, 
which means that the anti-commutator $\{\Re\D,\Im\D\}$ is 
relatively bounded by $\Re\D$. A theorem of Kaad-Lesch 
\cite{KL12} (quoted in \cref{thm:sum-sa-reg})
then allows us to conclude 
that $\D_\pm$ are self-adjoint. 

Unfortunately, our main motivating 
example, namely the Dirac operator $\sD$ on a pseudo-Riemannian 
manifold, does not satisfy this condition. Indeed, although the 
anti-commutator $\{\Re\sD,\Im\sD\}$ is a first-order differential 
operator, it contains in general both spacelike derivatives and 
timelike derivatives, and thus it is not relatively bounded by $\Re\sD$ (nor by $\Im\sD$). 
In order to ensure that $\Re\sD$ and $\Im\sD$ almost anti-commute, 
we need the timelike part of $\{\Re\sD,\Im\sD\}$ to vanish identically, 
which places a restriction on the geometry of the pseudo-Riemannian manifold (see below). 
This asymmetry between the timelike and spacelike parts of $\{\Re\sD,\Im\sD\}$ is artificial, 
and indicates that it would be desirable to have a more general version of Kaad and Lesch'
theorem: we aim to return to this issue in a future work.

The layout of this article is as follows. 
In \cref{sec:bad-name} we first describe our approach to 
dealing with non-symmetric operators, where we 
emphasise the real and imaginary parts of the operator. 
Subsequently, we gather some results on almost (anti-)commuting 
operators which will be useful later on.

Next, we define indefinite Kasparov modules as well as 
pairs of Kasparov modules in \cref{sec:indef_KK}, and 
we prove our main theorem, which states that these 
definitions are equivalent. We continue in \cref{sec:indef_odd} 
by discussing the odd version of indefinite Kasparov modules. 
As for usual Kasparov modules, it is straightforward to turn an 
odd indefinite Kasparov module into an even one by `doubling it up'. 
We then prove that these odd modules are characterised by pairs of 
Kasparov modules for which the two operators are related via a certain unitary equivalence. 

In \cref{sec:egs} we discuss several examples. We start 
in \cref{sec:pseudo-Riem-mfd} with the main motivating 
example, namely the Dirac operator on a pseudo-Riemannian 
spin manifold. We show, under certain mild assumptions on the 
manifold, that this Dirac operator satisfies all but one condition 
in the definition of indefinite Kasparov modules. The condition 
that fails (as mentioned above) is the assumption that the real 
and imaginary parts of the Dirac operator almost anti-commute in the sense of Kaad-Lesch. 
We continue to show that this condition does hold for the 
case of Lorentzian manifolds with `parallel time'. This example indicates that further study is required
to obtain a more flexible formulation of indefinite Kasparov modules.

The second example (\cref{sec:harm-osc}) considers the 
harmonic oscillator in arbitrary dimensions. This example in 
particular shows that manifolds with indefinite metrics are 
not the only examples of our framework.

Finally, in \cref{sec:families} we discuss families of spectral 
triples (building upon work by Kaad and Lesch \cite{KL13}), 
and we show that one can naturally associate an indefinite 
Kasparov module to such families. Our work on families of 
spectral triples was initially motivated by the study of spacelike foliations 
of spacetime from the perspective of noncommutative geometry. 

{\bf Acknowledgments}
The first author acknowledges support from both the 
Australian National University and the University of Wollongong. The second author acknowledges
the support of the Australian Research Council. Both authors thank Magnus Goffeng and
Bram Mesland for insightful discussions, and the referee for identifying errors in a previous version. 
Both authors also thank the Hausdorff Research Institute for Mathematics (HIM) for their 
hospitality during the Hausdorff Trimester Program \emph{Non-commutative 
Geometry and its Applications} in 2014, where this work was first presented.

\section{Preliminaries on unbounded operators on Hilbert modules}
\label{sec:bad-name}
Let $B$ be a $\Z_2$-graded $C^*$-algebra. 
Recall that a $\Z_2$-graded Hilbert $B$-module $E$ is a vector space 
equipped with a $\Z_2$-graded right action $E\times B\to E$ and with a 
$B$-valued inner product $(\cdot|\cdot)\colon E\times E\to B$, such that 
$E$ is complete in the corresponding norm. 
The endomorphisms $\End_B(E)$ are the adjointable linear operators 
$E\to E$, and the set $\End^0_B(E)$ of compact endomorphisms is 
given by the closure of the finite rank operators. 
For a detailed introduction to Hilbert modules and $\Z_2$-gradings, 
we refer to \cite{Blackadar98,Lance95}. 
\subsection{Non-symmetric operators}
\label{sec:non-symm}
In this section, we describe our approach to dealing with 
non-symmetric operators, namely by studying the real and imaginary parts of such operators. 
Let us start with a useful lemma regarding the `combined graph norm' of 
two closed operators on the intersection of their domains.
\begin{lem}
\label{lem:complete_S_T}
Let $E$ be a right Hilbert $B$-module with inner product 
$(\cdot|\cdot)$. Let $S$ and $T$ be closed regular operators on 
$E$ such that $\Dom S\cap\Dom T$ is dense in $E$. Then 
$\Dom S\cap\Dom T$ is a right Hilbert $B$-module with the inner product
$$
(\phi|\psi)_{S,T} := (\phi|\psi) + (S\phi|S\psi) + (T\phi|T\psi) ,
$$
and the corresponding norm 
$\|\psi\|_{S,T}^2 = \|(\psi|\psi)_{S,T}\|_B$.
\end{lem}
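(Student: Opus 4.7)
The plan is to verify the two non-trivial pieces of the claim: that $(\cdot|\cdot)_{S,T}$ is genuinely a $B$-valued inner product on $\Dom S \cap \Dom T$, and that $\Dom S \cap \Dom T$ is complete in the induced norm. The algebraic properties are immediate once one observes that regular operators on Hilbert modules are $B$-linear on their domains: sesquilinearity follows from sesquilinearity of $(\cdot|\cdot)$, the right $B$-action preserves $\Dom S \cap \Dom T$ (since $\phi \in \Dom S$ and $b\in B$ give $\phi b \in \Dom S$ with $S(\phi b) = (S\phi) b$, and similarly for $T$), positivity is automatic as $(\phi|\phi)_{S,T}$ is a sum of three positive elements of $B$, and positive-definiteness follows from positive-definiteness of $(\cdot|\cdot)$ alone.

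The core of the argument is completeness. I would take a Cauchy sequence $(\psi_n) \subset \Dom S \cap \Dom T$ with respect to $\|\cdot\|_{S,T}$. From the identity
\[
\|\phi\|_{S,T}^2 = \|(\phi|\phi) + (S\phi|S\phi) + (T\phi|T\phi)\|_B \geq \max\bigl(\|\phi\|^2, \|S\phi\|^2, \|T\phi\|^2\bigr),
\]
each of the three sequences $(\psi_n)$, $(S\psi_n)$, $(T\psi_n)$ is Cauchy in the original norm on $E$, and therefore converges in $E$ (by completeness of $E$) to elements $\psi$, $\sigma$, $\tau$ respectively. Because $S$ is closed, the convergences $\psi_n \to \psi$ and $S\psi_n \to \sigma$ force $\psi \in \Dom S$ with $S\psi = \sigma$; applying the same reasoning to $T$ gives $\psi \in \Dom T$ with $T\psi = \tau$. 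Hence $\psi \in \Dom S \cap \Dom T$ and $\psi_n \to \psi$ in $\|\cdot\|_{S,T}$.

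The main (minor) obstacle is really just confirming that the sum-of-three-positives norm dominates each individual graph norm so that one may apply closedness of $S$ and $T$ separately; this is routine but worth stating explicitly since the $B$-valued norm is only a norm after taking $\|\cdot\|_B$, and one has to unpack the inequality $\|a+b+c\|_B \geq \|a\|_B$ for $a,b,c \geq 0$ in $B$. Once this is in hand, the density hypothesis on $\Dom S \cap \Dom T$ is not actually needed for the module structure itself, but it ensures that the resulting Hilbert $B$-module carries a useful inclusion into $E$; I would note this at the end for completeness.
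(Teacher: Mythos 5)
Your proof is correct and follows essentially the same route as the paper's: both rest on the observation that $\|\cdot\|_{S,T}$ dominates (indeed is equivalent to the combination of) the two graph norms, together with the closedness of $S$ and $T$. Your direct Cauchy-sequence formulation is merely a cosmetic variant of the paper's argument via the graph-norm closures $\mW_S$ and $\mW_T$, and your extra remarks (the $B$-linearity of regular operators on their domains, the inequality $\|a\|_B\leq\|a+b+c\|_B$ for positive elements, and the fact that density is not needed for the module structure) are all accurate.
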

\begin{proof}
We need to show that $\Dom S\cap\Dom T$ is complete in the 
norm $\|\cdot\|_{S,T}$. Since $S$ is closed, we know that 
$\Dom S$ is complete for the graph norm $\|\cdot\|_S$ corresponding to the inner product
$$
(\phi|\psi)_S := (\phi|\psi) + (S\phi|S\psi) ,
$$
and a similar statement holds for $\Dom T$. 
The inequalities 
$$
\frac12 (\psi|\psi)_S + \frac12 (\psi|\psi)_T \leq (\psi|\psi)_{S,T} \leq (\psi|\psi)_S + (\psi|\psi)_T
$$
show that convergence in the norm $\|\cdot\|_{S,T}$ is 
equivalent to convergence in both graph norms $\|\cdot\|_S$ 
and $\|\cdot\|_T$. Denote by $\mW_S$ (respectively $\mW_T$) 
the closure of $\Dom S\cap\Dom T$ in the norm $\|\cdot\|_S$ 
(respectively $\|\cdot\|_T$). Then the closure of $\Dom S\cap\Dom T$ 
in the norm $\|\cdot\|_{S,T}$ is contained in the intersection of $\mW_S$ 
and $\mW_T$. Since $\mW_S\subset\Dom S$ and $\mW_T\subset\Dom T$, 
this intersection $\mW_S\cap\mW_T$ is contained in, and hence equal to, 
$\Dom S\cap\Dom T$, so we conclude that $\Dom S\cap\Dom T$ is 
complete in the norm $\|\cdot\|_{S,T}$. 
\end{proof}
In what follows, we will consider a closed regular operator $\D$ 
on a right Hilbert $B$-module $E$, such that $\Dom\D\cap\Dom\D^*$ 
is dense in $E$. The above lemma then tells us that $\Dom\D\cap\Dom\D^*$ 
is a Hilbert $B$-module with the inner product $(\cdot|\cdot)_{\D,\D^*}$.
\begin{defn}
Let $\D$ be a closed regular operator on a Hilbert 
$B$-module $E$, such that $\Dom\D\cap\Dom\D^*$ is dense. 
We define the \emph{real and imaginary parts} of $\D$ by setting
\begin{align*}
\Re\D &:= \frac12 (\D+\D^*) , & \Im\D &:= -\frac i2 (\D-\D^*) ,
\end{align*}
on the initial domain $\Dom\D\cap\Dom\D^*$. Since these 
operators are densely defined and symmetric, they are closable, 
and we denote their closures by $\Re\D$ and $\Im\D$ as well. 
Furthermore, we define the \emph{`Wick rotations'} of $\D$ by 
\begin{align*}
\D_+ &:= \Re\D + \Im\D , & \D_- &:= \Re\D - \Im\D ,
\end{align*}
on the initial domain $\Dom\Re\D\cap\Dom\Im\D$. 
The term `Wick rotation' is borrowed from physics, and its use is motivated by \cref{prop:Wick_commuting_diagram}. 
\end{defn}
\begin{lem}
\label{lem:norms_D}
Let $\D$ be a closed regular operator on a right Hilbert 
$B$-module $E$, such that $\Dom\D\cap\Dom\D^*$ is dense in $E$. 
Then the norms $\|\cdot\|_{\D,\D^*}$, $\|\cdot\|_{\Re\D,\Im\D}$, 
and $\|\cdot\|_{\D_+,\D_-}$ (defined as in \cref{lem:complete_S_T}) are equivalent on $\Dom\D\cap\Dom\D^*$.
\end{lem}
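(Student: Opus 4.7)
The plan is to express all three inner products (restricted to the initial domain $\Dom\D\cap\Dom\D^*$) in terms of $(\cdot|\cdot)_{\D,\D^*}$ and $(\cdot|\cdot)$ by direct algebraic computation, and then read off the norm equivalences from order inequalities of positive elements in the $C^*$-algebra $B$. Since $\Re\D$, $\Im\D$, $\D_+$, $\D_-$ act on $\Dom\D\cap\Dom\D^*$ by their initial formulas rather than closures, the computation stays entirely at the level of finite linear combinations of $\D\psi$, $\D^*\psi$, and $\psi$.

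First I would substitute $\Re\D = \frac12(\D+\D^*)$ and $\Im\D = -\frac{i}{2}(\D-\D^*)$ into $(\Re\D\psi|\Re\D\psi) + (\Im\D\psi|\Im\D\psi)$. Expanding by sesquilinearity of the $B$-valued inner product, the cross terms $(\D\psi|\D^*\psi)$ and $(\D^*\psi|\D\psi)$ cancel, and one is left with
$$
(\Re\D\psi|\Re\D\psi) + (\Im\D\psi|\Im\D\psi) = \tfrac12\bigl((\D\psi|\D\psi) + (\D^*\psi|\D^*\psi)\bigr).
$$
This gives the clean identity $(\psi|\psi)_{\Re\D,\Im\D} = \tfrac12\bigl((\psi|\psi)_{\D,\D^*} + (\psi|\psi)\bigr)$ as positive elements of $B$, whence $\tfrac12(\psi|\psi)_{\D,\D^*} \leq (\psi|\psi)_{\Re\D,\Im\D} \leq (\psi|\psi)_{\D,\D^*}$.

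Second, I would apply the parallelogram identity (valid for $B$-valued inner products since it is a purely algebraic consequence of sesquilinearity) to the pair $\Re\D\psi$, $\pm\Im\D\psi$, obtaining
$$
(\D_+\psi|\D_+\psi) + (\D_-\psi|\D_-\psi) = 2\bigl((\Re\D\psi|\Re\D\psi) + (\Im\D\psi|\Im\D\psi)\bigr).
$$
Combining with the previous identity collapses to $(\psi|\psi)_{\D_+,\D_-} = (\psi|\psi)_{\D,\D^*}$, so these two inner products actually agree on $\Dom\D\cap\Dom\D^*$ (a slightly stronger statement than mere norm equivalence).

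Finally, I would invoke the standard fact that $0 \leq x \leq y$ in a $C^*$-algebra forces $\|x\|_B \leq \|y\|_B$. Applied to the order inequalities above, this yields $\|\psi\|_{\D_+,\D_-} = \|\psi\|_{\D,\D^*}$ and $\tfrac{1}{\sqrt 2}\|\psi\|_{\D,\D^*} \leq \|\psi\|_{\Re\D,\Im\D} \leq \|\psi\|_{\D,\D^*}$, completing the equivalence of all three norms. There is no real obstacle here: the argument is purely algebraic and uses only sesquilinearity of the $B$-valued inner product together with positivity in $B$; no regularity or self-adjointness input beyond what is already present in the hypothesis is required.
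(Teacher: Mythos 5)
Your proposal is correct and follows essentially the same route as the paper: the paper's proof is precisely the elementary computation yielding $(\psi|\psi)_{\Re\D,\Im\D} = \tfrac12(\psi|\psi) + \tfrac12(\psi|\psi)_{\D,\D^*}$ and $(\psi|\psi)_{\D_+,\D_-} = (\psi|\psi)_{\D,\D^*}$, from which norm equivalence follows. You merely spell out the cancellation of cross terms, the parallelogram identity, and the positivity argument that the paper leaves implicit.
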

\begin{proof}
An elementary calculation shows that we have the equalities 
\begin{align*}
(\phi|\psi)_{\Re\D,\Im\D} &= \frac12 (\phi|\psi) + \frac12 (\phi|\psi)_{\D,\D^*} , & (\phi|\psi)_{\D_+,\D_-} 
&= (\phi|\psi)_{\D,\D^*} ,
\end{align*}
from which it follows that the three norms $\|\cdot\|_{\D,\D^*}$, 
$\|\cdot\|_{\Re\D,\Im\D}$, and $\|\cdot\|_{\D_+,\D_-}$ are equivalent. 
\end{proof}
\begin{lem}
\label{lem:common_core}
Let $\D$ be a closed regular operator on a Hilbert 
$B$-module $E$, such that $\Dom\D\cap\Dom\D^*$ is dense. If $\Dom\D\cap\Dom\D^*$ is a core for both $\D$ and $\D^*$, then $\Dom\D\cap\Dom\D^* = \Dom\Re\D\cap\Dom\Im\D$, $\D = \Re\D+i\Im\D$, and $\D^* = \Re\D-i\Im\D$. 
\end{lem}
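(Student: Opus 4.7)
The plan is to prove the three claims together by establishing the domain equality via two inclusions, with the algebraic formulas $\D = \Re\D + i\,\Im\D$ and $\D^* = \Re\D - i\,\Im\D$ falling out along the way. The forward inclusion $\Dom\D \cap \Dom\D^* \subseteq \Dom\Re\D \cap \Dom\Im\D$ is immediate, since the closed operators $\Re\D$ and $\Im\D$ are by definition extensions of operators whose initial domain is precisely $\Dom\D \cap \Dom\D^*$.

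For the reverse inclusion I would use the characterisation $\D = \D^{**}$: a vector $\psi \in E$ lies in $\Dom\D$ exactly when there is some $\phi \in E$ with $(\D^*\eta\mid\psi) = (\eta\mid\phi)$ for every $\eta \in \Dom\D^*$. Given $\psi \in \Dom\Re\D \cap \Dom\Im\D$, the natural candidate is $\phi := \Re\D\,\psi + i\,\Im\D\,\psi$. I would verify the identity first for $\eta$ in the initial domain $\Dom\D\cap\Dom\D^*$, where one honestly has $\D^*\eta = \Re\D\,\eta - i\,\Im\D\,\eta$: using the symmetry of $\Re\D$ and $\Im\D$ to shift the pairings onto $\psi$ and then collecting real and imaginary parts reproduces $(\eta \mid \phi)$.

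The identity is then propagated from the initial domain to all of $\Dom\D^*$ by invoking the hypothesis that $\Dom\D\cap\Dom\D^*$ is a core for $\D^*$: any $\eta \in \Dom\D^*$ admits approximants $\eta_n$ in the initial domain with $\eta_n \to \eta$ and $\D^*\eta_n \to \D^*\eta$, so both sides pass to the limit. This places $\psi$ in $\Dom\D$ with $\D\,\psi = \Re\D\,\psi + i\,\Im\D\,\psi$. An entirely symmetric argument, using the core hypothesis for $\D$ and the identity $\D\,\eta = \Re\D\,\eta + i\,\Im\D\,\eta$ on the initial domain, yields $\psi \in \Dom\D^*$ with $\D^*\psi = \Re\D\,\psi - i\,\Im\D\,\psi$. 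This completes the reverse inclusion and the two operator identities at the same stroke.

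The one delicate point is the symmetry step: one needs $(\Re\D\,\eta \mid \psi) = (\eta \mid \Re\D\,\psi)$ with $\eta$ only in the initial domain and $\psi$ in the (possibly larger) closed domain of $\Re\D$. This is harmless because the closure of a densely defined symmetric operator is itself symmetric, so the identity holds whenever both entries sit in the closed domain, which they do. Beyond that checkpoint the argument is pure bookkeeping with real and imaginary parts; no further analytic input (e.g.\ equivalence of the norms from \cref{lem:norms_D}) is actually required.
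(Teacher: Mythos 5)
Your proof is correct, and it takes a recognizably different (more hands-on) route than the paper's. The paper argues at the operator level: it introduces the auxiliary closed operator $\til\D$, defined as the closure of $\Re\D+i\Im\D$ on $\Dom\Re\D\cap\Dom\Im\D$, observes that $\D$ and $\til\D$ agree on the core $\Dom\D\cap\Dom\D^*$ so that $\D\subset\til\D$ and $\til\D^*\subset\D^*$, uses the core hypothesis for $\D^*$ to get the reverse inclusion $\D^*\subset\til\D^*$, concludes $\D^*=\til\D^*$ and hence $\D=\til\D$, and then reads the domain equality off the sandwich $\Dom\D\cap\Dom\D^*\subset\Dom\Re\D\cap\Dom\Im\D\subset\Dom\til\D\cap\Dom\til\D^*$. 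You instead work vector by vector: for $\psi\in\Dom\Re\D\cap\Dom\Im\D$ you verify the defining adjoint pairings against vectors in the initial domain (where $\D=\Re\D+i\Im\D$ and $\D^*=\Re\D-i\Im\D$ hold honestly) and extend them by the two core hypotheses, which places $\psi$ directly in $\Dom\D\cap\Dom\D^*$ together with the explicit formulas. The ingredients are the same -- the formulas on the initial domain, symmetry of the closures $\Re\D$ and $\Im\D$, the core hypotheses, and $\D=(\D^*)^*$ -- but your version avoids the auxiliary operator and has the merit of making explicit where regularity enters: in the Hilbert-module setting $(\D^*)^*=\D$ is not automatic for a closed, densely defined operator and is exactly what the regularity hypothesis buys; the paper uses the same fact implicitly in the step ``hence $\D=\til\D$''. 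One small interpretive point: the conclusion ``$\D=\Re\D+i\Im\D$'' should be read as ``$\D$ is the closure of $\Re\D+i\Im\D$ on $\Dom\Re\D\cap\Dom\Im\D$''; your pointwise identity gives this because that common domain is by hypothesis a core for $\D$ (and likewise for $\D^*$), so nothing is missing. Your closing observation also matches the paper: its proof, like yours, makes no use of the norm equivalences of \cref{lem:norms_D}.
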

\begin{proof}
The operators $\Re\D$ and $\Im\D$ are initially defined on $\Dom\D\cap\Dom\D^*$, so the inclusion $\Dom\D\cap\Dom\D^* \subset \Dom\Re\D\cap\Dom\Im\D$ is obvious. Suppose that  $\Dom\D\cap\Dom\D^*$ is a core for both $\D$ and $\D^*$. Consider the closed operator $\til\D$  defined as the closure of $\Re\D + i \Im\D$ on the initial domain $\Dom\Re\D\cap\Dom\Im\D$. Obviously, $\til\D$ and $\D$ agree on $\Dom\D\cap\Dom\D^*$, and since this domain is a core for $\D$, it follows that $\til\D$ is an extension of $\D$, and we have $\til\D^* \subset \D^*$. However, on $\Dom\D\cap\Dom\D^*$ both $\til\D^*$ and $\D^*$ are given by $\Re\D-i\Im\D$, and since this domain is a core for $\D^*$, it follows that $\D^* \subset \til\D^*$. Hence $\D^* = \til\D^*$ and therefore $\D = \til\D$. By construction of $\til\D$ we have the domain inclusions
$$
\Dom\D\cap\Dom\D^* \subset \Dom\Re\D\cap\Dom\Im\D \subset \Dom\til\D\cap\Dom\til\D^* .
$$
Since we have shown that $\D=\til\D$, we conclude that these inclusions are equalities. 
\end{proof}
\begin{defn}
Let $\D_1$ and $\D_2$ be closed, regular and symmetric
operators on a Hilbert $B$-module $E$, such that 
$\Dom\D_1\cap\Dom\D_2$ is dense in $E$. 
We define the \emph{reverse Wick rotation} of the pair $(\D_1,\D_2)$ as the closure of
$$
\D := \frac12 (\D_1+\D_2) + \frac i2 (\D_1-\D_2) 
$$
on the initial domain $\Dom\D_1\cap\Dom\D_2$ (note that 
$\D$ is closable, because it is the sum of a symmetric and an 
anti-symmetric operator, which ensures that $\D^*$ is densely defined). 
\end{defn}
\begin{remark}
We emphasise that the reverse Wick rotation $\D'$ of the pair 
$(\D_2,\D_1)$ is \emph{not} equal to the reverse Wick rotation of 
$(\D_1,\D_2)$, but they are related to each other: $\D'$ is the 
closure of the restriction of $\D^*$ to $\Dom\D_1\cap\Dom\D_2$. 
In other words, $\D^*$ is a closed extension of the closure of $\D'$, 
and they are equal if and only if $\Dom\D_1\cap\Dom\D_2$ is a core for $\D^*$.
\end{remark}
\begin{lem}
\label{lem:norms-1-2}
Let $\D_1$ and $\D_2$ be closed, regular and symmetric operators on a 
Hilbert $B$-module $E$ such that $\Dom\D_1\cap\Dom\D_2$ is dense in $E$. 
Let $\D$ be the reverse Wick rotation of $(\D_1,\D_2)$. 
Then the norms
$\|\cdot\|_{\D,\D^*}$, $\|\cdot\|_{\Re\D,\Im\D}$, and $\|\cdot\|_{\D_1,\D_2}$ 
are all equivalent on $\Dom\D_1\cap\Dom\D_2$.
\end{lem}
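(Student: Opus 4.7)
The plan is to reduce this to an essentially algebraic calculation, very much in the spirit of \cref{lem:norms_D}. The starting point is to note that on the initial domain $\Dom\D_1\cap\Dom\D_2$, the reverse Wick rotation $\D$ agrees with the explicit expression $\frac12(\D_1+\D_2)+\frac i2(\D_1-\D_2)$ (since the closure extends the given operator), and the formal adjoint agrees with $\frac12(\D_1+\D_2)-\frac i2(\D_1-\D_2)$, which since $\D_1,\D_2$ are symmetric is a subset of $\D^*$. Thus $\Dom\D_1\cap\Dom\D_2\subset\Dom\D\cap\Dom\D^*$. From this one reads off that on $\Dom\D_1\cap\Dom\D_2$ one has $\Re\D=\frac12(\D_1+\D_2)$ and $\Im\D=\frac12(\D_1-\D_2)$, so that in particular $\Dom\D_1\cap\Dom\D_2\subset\Dom\Re\D\cap\Dom\Im\D$, and all three inner products make sense there.

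Next, I would compute $(\D\psi|\D\psi)+(\D^*\psi|\D^*\psi)$ directly. Writing $A=\frac12(\D_1+\D_2)\psi$ and $B=\frac12(\D_1-\D_2)\psi$, one has
\[
(\D\psi|\D\psi)+(\D^*\psi|\D^*\psi)=(A+iB|A+iB)+(A-iB|A-iB)=2(A|A)+2(B|B),
\]
since the cross terms $\pm i(A|B)\mp i(B|A)$ cancel. Expanding $2(A|A)+2(B|B)=\frac12(\D_1\psi+\D_2\psi\mid\D_1\psi+\D_2\psi)+\frac12(\D_1\psi-\D_2\psi\mid\D_1\psi-\D_2\psi)$ and using the polarisation identity gives $(\D_1\psi|\D_1\psi)+(\D_2\psi|\D_2\psi)$. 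Therefore $(\phi|\psi)_{\D,\D^*}=(\phi|\psi)_{\D_1,\D_2}$ on $\Dom\D_1\cap\Dom\D_2$, and in particular the two norms $\|\cdot\|_{\D,\D^*}$ and $\|\cdot\|_{\D_1,\D_2}$ are equal there.

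For the remaining equivalence with $\|\cdot\|_{\Re\D,\Im\D}$, I would simply invoke \cref{lem:norms_D}: that result gives the equivalence of $\|\cdot\|_{\D,\D^*}$ and $\|\cdot\|_{\Re\D,\Im\D}$ on $\Dom\D\cap\Dom\D^*$, and in particular on the subset $\Dom\D_1\cap\Dom\D_2$. Alternatively one obtains this directly via
\[
(\Re\D\psi|\Re\D\psi)+(\Im\D\psi|\Im\D\psi)=\tfrac14\bigl((\D_1+\D_2)\psi\mid(\D_1+\D_2)\psi\bigr)+\tfrac14\bigl((\D_1-\D_2)\psi\mid(\D_1-\D_2)\psi\bigr)=\tfrac12(\D_1\psi|\D_1\psi)+\tfrac12(\D_2\psi|\D_2\psi),
\]
whence $(\phi|\psi)_{\Re\D,\Im\D}=\tfrac12(\phi|\psi)+\tfrac12(\phi|\psi)_{\D_1,\D_2}$, which sandwiches $\|\cdot\|_{\Re\D,\Im\D}^2$ between $\tfrac12\|\cdot\|_{\D_1,\D_2}^2$ and $\|\cdot\|_{\D_1,\D_2}^2$.

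There is no real obstacle here: the content is an elementary polarisation-type computation with the $B$-valued inner product. The only points requiring care are keeping track of the fact that the closure $\D$ extends the formal expression on $\Dom\D_1\cap\Dom\D_2$ (so all inner products can be evaluated pointwise on this domain without invoking closures), and handling the signs in the $C^*$-valued cross terms $\pm i(A|B)\mp i(B|A)$ when adding the two graph contributions.
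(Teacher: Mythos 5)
Your proposal is correct and follows essentially the same route as the paper: establish that $\Dom\D_1\cap\Dom\D_2\subset\Dom\D^*$ via the symmetry of $\D_1\pm\D_2$, identify $\Re\D=\frac12(\D_1+\D_2)$, $\Im\D=\frac12(\D_1-\D_2)$ (equivalently $\D_1=\D_+$, $\D_2=\D_-$) on that domain, and reduce to \cref{lem:norms_D}. The only difference is that you additionally spell out the polarisation computation that \cref{lem:norms_D} already encapsulates.
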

\begin{proof}
Let us write $\E := \Dom\D_1\cap\Dom\D_2$. 
The operators $\D_1\pm\D_2$ are symmetric on $\E$, so the 
domain of $\D^*$ also contains $\E$ (and in particular $\Dom\D\cap\Dom\D^*$ is dense). 
For $\psi\in\E$ we can then write
$$
\D^* \psi = \frac12 (\D_1+\D_2) \psi - \frac i2 (\D_1-\D_2) \psi .
$$
Hence on the initial domain $\E$ we can write $\D_1 = \D_+$, 
$\D_2 = \D_-$, $\Re\D = \frac12(\D_1+\D_2)$, and 
$\Im\D = \frac12(\D_1-\D_2)$. From \cref{lem:norms_D} it then 
follows that the norms $\|\cdot\|_{\D,\D^*}$, $\|\cdot\|_{\Re\D,\Im\D}$, 
and $\|\cdot\|_{\D_1,\D_2}$  are equivalent on $\E$.
\end{proof}
\subsection{Almost (anti-)commuting operators}
\label{sec:almost-ac-op}
Almost (anti-)commuting operators were considered 
by Mesland in \cite{Mes14}, and later generalised by Kaad
and Lesch in \cite{KL12}, for the construction of the unbounded Kasparov product. 
Almost anti-commuting operators play an important role later in proving
that the Wick rotations of indefinite Kasparov modules are (genuine) Kasparov modules.
In this section, we recall the results from \cite{KL12}, and prove a few further consequences. 
\begin{defn}[see {\cite[Assumption 7.1]{KL12}}]
\label{defn:almost_(anti-)commute}
Let $S$ and $T$ be regular self-adjoint operators on a Hilbert $A$-module $E$ such that\\
1) there exists a submodule $\E\subset\Dom T$ which is a core for $T$, and\\
2) for each $\xi\in\E$ and for all $\mu\in\R\backslash\{0\}$ we have the inclusions
\begin{align*}
(S-i\mu)^{-1} \xi &\in \Dom S\cap\Dom T\quad\mbox{and}\quad  T(S-i\mu)^{-1}\xi \in \Dom S .
\end{align*}
The pair $(S,T)$ is called an \emph{almost commuting pair} if in addition\\
3) The map $[S,T] (S-i\mu)^{-1} \colon \E \to E$ extends to a bounded operator 
in $\End_A(E)$ for all $\mu\in\R\backslash\{0\}$.

Similarly, the pair $(S,T)$ is called an \emph{almost anti-commuting pair} if instead of 3) we have\\
3') The map $\{S,T\} (S-i\mu)^{-1} \colon \E \to E$ extends to a 
bounded operator in $\End_A(E)$ for all $\mu\in\R\backslash\{0\}$.

These conditions are often summarised by simply saying that 
$[S,T](S-i\mu)^{-1}$ (or $\{S,T\}(S-i\mu)^{-1}$) is well-defined and bounded. 
\end{defn}
\begin{lem}
\label{lem:pair_S_ess-sa}
Let $(S,T)$ be a pair of regular self-adjoint operators on a Hilbert module $E$
satisfying $1)$ and $2)$ of Definition \ref{defn:almost_(anti-)commute}. 
Then $S$ is essentially self-adjoint on $\Dom S\cap\Dom T$.
\end{lem}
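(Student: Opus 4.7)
The plan is to show directly that $\Dom S\cap\Dom T$ is a core for $S$; since $S$ is already self-adjoint on $\Dom S$, any symmetric restriction having $S$ as its closure is automatically essentially self-adjoint with closure $S$. Concretely, I need to produce, for every $\eta\in\Dom S$, a sequence $\eta_n\in\Dom S\cap\Dom T$ with $\eta_n\to\eta$ and $S\eta_n\to S\eta$.

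The natural device is resolvent approximation. Fix any $\mu\in\R\setminus\{0\}$. Because $S$ is regular and self-adjoint, the resolvent $(S-i\mu)^{-1}\in\End_A(E)$ is bounded with range $\Dom S$, and for $\eta\in\Dom S$ we may write $\eta=(S-i\mu)^{-1}\zeta$ with $\zeta:=(S-i\mu)\eta\in E$. Since $\E\subset\Dom T$ is a core for $T$, it is dense in $\Dom T$ in the graph norm of $T$, and $\Dom T$ is dense in $E$; hence $\E$ is dense in $E$. Choose $\xi_n\in\E$ with $\xi_n\to\zeta$ in $E$, and set
$$
\eta_n := (S-i\mu)^{-1}\xi_n.
$$
By condition $2)$ of \cref{defn:almost_(anti-)commute}, each $\eta_n$ lies in $\Dom S\cap\Dom T$.

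Boundedness of the resolvent gives $\eta_n\to(S-i\mu)^{-1}\zeta=\eta$ in $E$, and then
$$
S\eta_n = (S-i\mu)\eta_n + i\mu\eta_n = \xi_n + i\mu\eta_n \longrightarrow \zeta + i\mu\eta = S\eta.
$$
This exhibits $\Dom S\cap\Dom T$ as a core for $S$, so the restriction $S|_{\Dom S\cap\Dom T}$ has closure $S$ and is therefore essentially self-adjoint.

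There is no real obstacle here; the only subtlety is the preliminary density statement that $\E$ is dense in $E$, which follows because cores of densely defined closed operators are norm-dense in the underlying module. The entire argument uses condition $2)$ in one place only: to guarantee that $(S-i\mu)^{-1}\E\subset\Dom S\cap\Dom T$, which is exactly the hypothesis one needs for the resolvent approximation to stay inside the intended core.
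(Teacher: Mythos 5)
Your proof is correct and follows essentially the same route as the paper's: both hinge on the single observation that condition 2) places $(S-i\mu)^{-1}\E$ inside $\Dom S\cap\Dom T$, so that the range of $(S-i\mu)$ restricted to that intersection contains the dense set $\E$. Your graph-norm approximation is just the unpacked form of the paper's dense-range criterion for essential self-adjointness, so there is nothing to add.
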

\begin{proof}
By assumption we have $(S\pm i)^{-1}(\xi) \in \Dom S\cap\Dom T$ for all $\xi\in \E$, where $\E$ is dense in $E$.
Since $S$ is self-adjoint, the operator $(S\pm i)^{-1}$ is bounded and has range $\Dom S$, which is dense in $E$. Hence $(S\pm i)^{-1}\E$ is also dense in $E$, from which it follows that $\Dom S\cap\Dom T$ is dense in $E$, so the operator $S|_{\Dom S\cap\Dom T}$ is symmetric and densely defined on $\Dom S\cap\Dom T$. Furthermore, the image of $(S\pm i)|_{\Dom S\cap\Dom T}$ contains $\E$ and is therefore also dense, which implies that $S|_{\Dom S\cap\Dom T}$ is essentially self-adjoint. 
\end{proof}
Given two regular self-adjoint operators $S$ and $T$ on a Hilbert $A$-module $E$, we consider two new operators on $E\oplus E$ given by
\begin{align*}
\til S &:= \mattwo{0}{iS}{-iS}{0} , & \til T &:= \mattwo{0}{T}{T}{0} ,
\end{align*}
with domains $\Dom\til S = (\Dom S)^{\oplus2}$ and $\Dom\til T = (\Dom T)^{\oplus2}$. 
One easily calculates that
\begin{align*}
\{\til S,\til T\} &= i\mattwo{[S,T]}{0}{0}{-[S,T]} , & [\til S,\til T] &= i\mattwo{\{S,T\}}{0}{0}{-\{S,T\}} ,
\end{align*}
whenever these operators are defined. Hence this `doubling trick' allows us to easily switch between almost commuting and anti-commuting operators. 
\begin{lem}
\label{lem:comm_anti-comm}
Let $S$ and $T$ be regular self-adjoint operators on a Hilbert $A$-module $E$, and let $\til S$ and $\til T$ be given as above. Then the following statements hold:\\
1) if $(S,T)$ is an almost commuting pair, then $(\til S,\til T)$ is an almost anti-commuting pair;\\
2) if $(S,T)$ is an almost anti-commuting pair, then $(\til S,\til T)$ is an almost commuting pair.
\end{lem}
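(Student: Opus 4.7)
The plan is to verify Conditions 1), 2), and 3)/3') of \cref{defn:almost_(anti-)commute} for $(\til S,\til T)$ from the corresponding conditions for $(S,T)$, using the matrix identities for $\{\til S,\til T\}$ and $[\til S,\til T]$ already given in the excerpt. Take $\til\E := \E\oplus\E$: this lies in $\Dom\til T = (\Dom T)^{\oplus 2}$, and since the graph norm of $\til T$ on $(\xi_1,\xi_2)$ splits as $\|\xi_1\|_T^2 + \|\xi_2\|_T^2$ and $\E$ is a core for $T$, $\til\E$ is a core for $\til T$, verifying Condition 1).

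For Condition 2), the self-adjoint operator $\til S$ satisfies $\til S^2 = S^2\oplus S^2$, so the resolvent can be computed as
$$
(\til S - i\mu)^{-1} = (\til S + i\mu)(\til S^2+\mu^2)^{-1} = \mattwo{i\mu(S^2+\mu^2)^{-1}}{iS(S^2+\mu^2)^{-1}}{-iS(S^2+\mu^2)^{-1}}{i\mu(S^2+\mu^2)^{-1}}.
$$
The partial fraction identities $S(S^2+\mu^2)^{-1} = \frac12[(S-i\mu)^{-1} + (S+i\mu)^{-1}]$ and $\mu(S^2+\mu^2)^{-1} = \frac{1}{2i}[(S-i\mu)^{-1} - (S+i\mu)^{-1}]$ express each matrix entry, when applied to $\xi\in\E$, as a linear combination of $(S\pm i\mu)^{-1}\xi$. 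Condition 2) for $(S,T)$ then guarantees that each such vector lies in $\Dom S\cap\Dom T$ with $T(S\pm i\mu)^{-1}\xi\in\Dom S$; consequently $(\til S - i\mu)^{-1}\til\xi\in(\Dom S\cap\Dom T)^{\oplus 2} = \Dom\til S\cap\Dom\til T$ and $\til T(\til S - i\mu)^{-1}\til\xi\in(\Dom S)^{\oplus 2}=\Dom\til S$ for every $\til\xi\in\til\E$.

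Finally, multiplying the resolvent formula above on the left by the block-diagonal expression for $\{\til S,\til T\}$ from the excerpt, each entry of the product $\{\til S,\til T\}(\til S - i\mu)^{-1}$ on $\til\E$ is a scalar multiple of either $[S,T]\,S(S^2+\mu^2)^{-1}$ or $[S,T]\,\mu(S^2+\mu^2)^{-1}$; applying the partial fraction identities once more, this becomes a linear combination of $[S,T](S\pm i\mu)^{-1}$, which is bounded by the almost commuting hypothesis on $(S,T)$. Hence $\{\til S,\til T\}(\til S - i\mu)^{-1}$ extends to a bounded operator, proving 1). The same argument with $\{S,T\}$ replacing $[S,T]$ proves 2). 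The main (mild) obstacle is the bookkeeping needed to see that the matrix identities for $\{\til S,\til T\}$ and $[\til S,\til T]$ apply to vectors in $(\til S-i\mu)^{-1}\til\E$; this reduces, via the domain statements just established, to the fact that $S(S\pm i\mu)^{-1}\xi = \xi \mp i\mu(S\pm i\mu)^{-1}\xi\in\Dom T$ for $\xi\in\E$.
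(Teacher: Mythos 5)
Your proof is correct and takes essentially the same approach as the paper: the doubling $\til\E=\E\oplus\E$ together with an explicit computation of $(\til S-i\mu)^{-1}$ in terms of resolvents of $S$, reducing conditions 1), 2) and 3$'$) for $(\til S,\til T)$ to the corresponding hypotheses on $(S,T)$. The only minor difference is that the paper keeps the resolvent factored as $\diag\big((S-i\mu)^{-1},(S-i\mu)^{-1}\big)$ times a bounded matrix mapping $\til\E$ into $\Dom\til T$ and then invokes \cite[Proposition 7.3]{KL12} to replace the core $\E$ by $\Dom T$, whereas your partial-fraction expansion uses the hypotheses at both parameters $\pm\mu$ directly on $\E$, making the verification self-contained.
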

\begin{proof}
We only prove the first statement, as the second statement is similar. So suppose that the operator $[S,T](S-i\mu)^{-1}\colon\E\to E$ is well-defined and bounded. Consider the submodule $\til\E=\E\oplus\E$ of the Hilbert module $\til E=E\oplus E$. 
An explicit calculation shows that we can rewrite
\begin{align*}
(\til S - i \mu)^{-1} &= \mattwo{-i\mu}{iS}{-iS}{-i\mu}^{-1} = \mattwo{(S-i\mu)^{-1}}{0}{0}{(S-i\mu)^{-1}} \mattwo{(S-i\mu)}{0}{0}{(S-i\mu)} \mattwo{-i\mu}{iS}{-iS}{-i\mu}^{-1} \\
&= \mattwo{(S-i\mu)^{-1}}{0}{0}{(S-i\mu)^{-1}} \mattwo{i\mu(S+i\mu)^{-1}}{iS(S+i\mu)^{-1}}{-iS(S+i\mu)^{-1}}{i\mu(S+i\mu)^{-1}} .
\end{align*}
The second matrix on the second line is bounded, and it maps $\til\E$ to $\Dom\til T$ (by assumption, $(S+i\mu)^{-1}$ maps $\E$ to $\Dom S\cap\Dom T$, and $S(S+i\mu)^{-1} = 1 - i\mu (S+i\mu)^{-1}$ maps $\E$ to $\Dom T$). 
Since the submodule $\E$ in \cref {defn:almost_(anti-)commute} can always be replaced by $\Dom T$ (see \cite[Proposition 7.3]{KL12}), this shows that $\til\E$ satisfies conditions 1) and 2). 
Furthermore, the operator
\begin{align*}
\{\til S,\til T\} (\til S - i \mu)^{-1} &= i\mattwo{[S,T](S-i\mu)^{-1}}{0}{0}{-[S,T](S-i\mu)^{-1}}\mattwo{i\mu(S+i\mu)^{-1}}{iS(S+i\mu)^{-1}}{-iS(S+i\mu)^{-1}}{i\mu(S+i\mu)^{-1}} .
\end{align*}
is then well-defined and bounded on $\til\E$. 
\end{proof}
\begin{thm}[{\cite[Theorem 7.10]{KL12}}]
\label{thm:sum-sa-reg}
Let $(S,T)$ be an almost commuting pair of regular self-adjoint operators on $E$. Then the operator
$$
\D := \mattwo{0}{S+iT}{S-iT}{0}
$$
with domain $\Dom\D := \big(\Dom S\cap\Dom T\big)^{\oplus2}$ is self-adjoint and regular.
\end{thm}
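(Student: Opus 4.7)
The plan is to verify in sequence: (i) $\D$ is closed on its stated domain; (ii) $\D$ is symmetric; and (iii) $\D-i\lambda$ is surjective for some $\lambda\in\R\setminus\{0\}$, which in the Hilbert module setting yields both self-adjointness and regularity. Steps (i) and (ii) are largely routine. The domain $(\Dom S\cap\Dom T)^{\oplus 2}$ is dense by the same argument as in \cref{lem:pair_S_ess-sa}, and is complete in its natural graph norm by \cref{lem:complete_S_T} applied componentwise to $S$ and $T$; a direct estimate shows this norm is equivalent to the graph norm of $\D$ on $(\Dom S\cap\Dom T)^{\oplus 2}$, so $\D$ is closed. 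Symmetry is a one-line computation:
$$
(\D(x,y)\mid(u,v)) = ((S+iT)y\mid u) + ((S-iT)x\mid v) = (y\mid(S-iT)u) + (x\mid(S+iT)v) = ((x,y)\mid\D(u,v)),
$$
valid since $u,v\in\Dom S\cap\Dom T$ and $S,T$ are self-adjoint.

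The heart of the matter is (iii), where the almost commuting condition must be used. The plan is to construct the resolvent $(\D - i\lambda)^{-1}$ explicitly for sufficiently large $\lambda>0$. Given $(u,v)\in E\oplus E$, solving $(\D-i\lambda)(x,y)=(u,v)$ and eliminating $y$ from the lower row reduces the question to inverting the operator
$$
H_\lambda := (S+iT)(S-iT) + \lambda^2 = S^2 + T^2 + \lambda^2 - i[S,T],
$$
where the final equality is to be understood on a suitable common core. The operator $S^2+T^2+\lambda^2$ is positive, self-adjoint and regular with bounded inverse, and the goal is to treat $-i[S,T]$ as a relatively bounded perturbation with relative bound less than $1$ for $\lambda$ large. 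Once $H_\lambda$ is invertible as an adjointable operator, one reassembles an explicit formula for $(\D-i\lambda)^{-1}$ which, together with the symmetric closed structure established above, gives self-adjointness and regularity of $\D$.

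The principal obstacle is quantitative. The hypothesis that $[S,T](S-i\mu)^{-1}$ extends boundedly controls the commutator against $S$ alone, not against the combination $S^2+T^2+\lambda^2$ that arises in $H_\lambda$. Bridging this gap requires either (a) using the commutator identity $T(S-i\mu)^{-1} = (S-i\mu)^{-1}T + (S-i\mu)^{-1}[S,T](S-i\mu)^{-1}$ together with the core condition (2) of \cref{defn:almost_(anti-)commute} to propagate the bound to one involving both $S$ and $T$, or (b) approximating $T$ by the bounded family $T_n := nT(n+T^2)^{-1}$, proving the theorem for each resulting $\D_n$ by standard bounded perturbation of the self-adjoint regular operator $\mattwo{0}{S}{S}{0}$, and passing to the limit in the strong resolvent sense while using the almost commuting hypothesis to control the convergence of commutators. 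I would expect route (b) to be the cleanest in practice, as it isolates the almost commuting condition to a single convergence statement and sidesteps direct Kato--Rellich estimates in the Hilbert module category.
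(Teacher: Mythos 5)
First, a point of orientation: the paper does not prove \cref{thm:sum-sa-reg} at all --- it is imported verbatim from Kaad--Lesch \cite[Theorem 7.10]{KL12} (in the form obtained from their ``weakly anticommuting'' version via the doubling trick of \cref{lem:comm_anti-comm}), so there is no in-text proof to compare yours against. Judged on its own terms, your skeleton (closed $+$ symmetric $+$ surjectivity of $\D\pm i\lambda$, the correct criterion for self-adjointness and regularity in the Hilbert-module setting) is the right one, but both of the routes you offer for the decisive step have genuine gaps, and so does step (i). The inequality $\|(S\pm iT)x\|^2\le 2\|Sx\|^2+2\|Tx\|^2$ is indeed a direct estimate, but the reverse inequality $\|Sx\|^2+\|Tx\|^2\le C\big(\|x\|^2+\|(S\pm iT)x\|^2\big)$ is \emph{not}: expanding the square produces the cross term $(x|[S,T]x)$ (defined only on a core where the products make sense), and controlling it is precisely where the almost-commuting hypothesis first enters (this is the content of \cite[Lemmas 7.5--7.6]{KL12}). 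Without that estimate you cannot deduce closedness of $\D$ from \cref{lem:complete_S_T}.

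For step (iii): route (a) begins by asserting that $S^2+T^2+\lambda^2$ is positive, self-adjoint and regular with bounded inverse. In a Hilbert module this is not a freebie --- self-adjointness and regularity of sums of unbounded operators is exactly the phenomenon that \cite[\S7]{KL12} is designed to address, so assuming it for $S^2+T^2$ is essentially circular; one would also need to know that $(S+iT)(S-iT)$ and $S^2+T^2-i[S,T]$ agree as unbounded operators and not merely on a formal core. Route (b) founders on the passage to the limit: each $\D_n$ is self-adjoint and regular on $(\Dom S)^{\oplus2}$ rather than on $(\Dom S\cap\Dom T)^{\oplus2}$, strong resolvent convergence does not identify the domain of the limit, and surjectivity of $\D\pm i\lambda$ --- which is what regularity requires, dense range being insufficient over a general coefficient algebra --- is not preserved under such limits. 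For the record, Kaad and Lesch argue differently: they first establish the a priori graph-norm estimate above, then prove essential self-adjointness on $\Dom S\cap\Dom T$ by the resolvent approximation $\xi_n=(1-\tfrac{i}{n}S)^{-1}\xi$ (the argument reproduced in \cref{prop:sum-sa}), and finally obtain regularity from their local-global principle, which reduces the surjectivity question to localisations that are honest Hilbert spaces, where classical perturbation theory applies. If you want a self-contained proof, that localisation step (or an equivalent substitute for it) is the missing ingredient.
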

Combining Theorem \ref{thm:sum-sa-reg} with  \cref{lem:comm_anti-comm}, 
we obtain a variant of Kaad and Lesch' result.
\begin{coro}
\label{coro:anti-sum-sa-reg}
Let $(S,T)$ be an almost \emph{anti}-commuting pair of regular self-adjoint operators on $E$. Then the operators $S+T$ and $S-T$ with domain $\Dom S\pm T = \Dom S\cap\Dom T$ are regular and self-adjoint. 
\end{coro}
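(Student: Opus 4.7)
My plan is to reduce the statement to \cref{thm:sum-sa-reg} by combining the doubling trick from \cref{lem:comm_anti-comm} with an elementary unitary rearrangement that isolates $S\pm T$.

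First, since $(S,T)$ is almost anti-commuting, \cref{lem:comm_anti-comm}(2) tells us that the doubled pair $(\til S,\til T)$ on $E\oplus E$ is almost commuting. Applying \cref{thm:sum-sa-reg} to $(\til S,\til T)$ then produces a self-adjoint regular operator
$$
\D = \mattwo{0}{\til S + i\til T}{\til S - i\til T}{0}
$$
on $(E\oplus E)^{\oplus 2}$ with domain $(\Dom\til S\cap\Dom\til T)^{\oplus 2} = (\Dom S\cap\Dom T)^{\oplus 4}$.

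Next, I compute the off-diagonal blocks directly from the definitions of $\til S$ and $\til T$:
$$
\til S + i\til T = i\mattwo{0}{S+T}{-(S-T)}{0}, \qquad \til S - i\til T = i\mattwo{0}{S-T}{-(S+T)}{0}.
$$
Writing $\D$ as a $4\times 4$ matrix of operators on $E^{\oplus 4}$, the non-zero entries couple only the first and fourth summands (via $\pm i(S+T)$) and the second and third summands (via $\mp i(S-T)$). Conjugating by the permutation unitary that reorders the summands as $(1,4,2,3)$ therefore turns $\D$ into the orthogonal direct sum $\D'_+ \oplus \D'_-$, where
$$
\D'_+ = \mattwo{0}{i(S+T)}{-i(S+T)}{0}, \qquad \D'_- = \mattwo{0}{-i(S-T)}{i(S-T)}{0},
$$
each on $(\Dom S\cap\Dom T)^{\oplus 2}$. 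Since permutation unitaries preserve self-adjointness and regularity, both $\D'_\pm$ are self-adjoint and regular.

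Finally, conjugating $\D'_\pm$ by the diagonal unitary $\diag(1,\mp i)$ removes the imaginary scalars, yielding $\bigl(\begin{smallmatrix} 0 & S\pm T \\ S\pm T & 0\end{smallmatrix}\bigr)$; a further conjugation by the Hadamard-type unitary $\tfrac{1}{\sqrt{2}}\bigl(\begin{smallmatrix} 1 & 1 \\ 1 & -1\end{smallmatrix}\bigr)$ diagonalises this into $\diag(S\pm T,\,-(S\pm T))$. Self-adjointness and regularity of $S\pm T$ on $\Dom S\cap\Dom T$ then follow by restriction to a single summand. The only substantive input is \cref{thm:sum-sa-reg}, which we use as a black box; the remaining bookkeeping is routine, and the equality of domains is built into the construction.
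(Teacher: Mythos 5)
Your argument is correct and is exactly the route the paper intends: the paper's entire ``proof'' of this corollary is the one-line remark that it follows by combining \cref{lem:comm_anti-comm}(2) with \cref{thm:sum-sa-reg}, and you have simply written out the block-matrix bookkeeping (permutation, diagonal, and Hadamard conjugations) that extracts $S\pm T$ from the resulting self-adjoint regular $4\times4$ operator. The only quibble is an immaterial sign convention in the conjugation by $\diag(1,\mp i)$ (depending on whether you mean $UAU^*$ or $U^*AU$ you may pick up an overall minus sign), which affects neither self-adjointness nor regularity.
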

From the assumption that $(S,T)$ is an almost commuting pair, it does not follow that $S\pm T$ is self-adjoint on $\Dom S\cap\Dom T$ (the obvious counter-example is $S=\mp T$). However, it does follow that $S\pm T$ is \emph{essentially} self-adjoint on $\Dom S\cap\Dom T$.
\begin{prop}
\label{prop:sum-sa}
Let $(S,T)$ be an almost commuting pair of regular self-adjoint operators on $E$. Then the operator $S+T$ is essentially self-adjoint on  $\Dom S\cap\Dom T$. 
\end{prop}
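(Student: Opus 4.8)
The plan is to reduce this statement to the already-established \cref{coro:anti-sum-sa-reg} by means of the doubling trick used throughout this section. First I would observe that it suffices to treat $S+T$, since $S-T = S+(-T)$ and $(S,-T)$ is again an almost commuting pair (the operator $[S,-T](S-i\mu)^{-1} = -[S,T](S-i\mu)^{-1}$ is still well-defined and bounded, and conditions 1)--2) are unchanged). So fix the almost commuting pair $(S,T)$ and consider the doubled operators $\til S$ and $\til T$ on $E\oplus E$. By \cref{lem:comm_anti-comm}, $(\til S,\til T)$ is an almost \emph{anti}-commuting pair, so \cref{coro:anti-sum-sa-reg} applies and tells us that $\til S+\til T$ is regular and self-adjoint on $\Dom\til S\cap\Dom\til T = (\Dom S\cap\Dom T)^{\oplus2}$.

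Next I would unwind what $\til S+\til T$ actually is. Writing the matrices out,
$$
\til S+\til T = \mattwo{0}{S+iT}{-iS+T}{0} = \mattwo{0}{i(T-iS)}{T-iS}{0} \begin{pmatrix} \text{?}\end{pmatrix}
$$
— more carefully, one reads off that $\til S + \til T$ is the off-diagonal operator with upper-right entry $iS + T$ and lower-left entry $-iS + T$. Conjugating by a suitable constant unitary (a diagonal unitary $\diag(u,1)$ together with, if necessary, the flip on $E\oplus E$), this is unitarily equivalent to $\mattwo{0}{S+T}{S+T}{0}$ composed with something diagonal; the point I want to extract is that the self-adjointness and regularity of the off-diagonal ampliation $\mattwo{0}{S+T}{S+T}{0}$ on $(\Dom S\cap\Dom T)^{\oplus 2}$ is equivalent to the self-adjointness and regularity of $S+T$ on $\Dom S\cap\Dom T$ — but that equivalence would give full self-adjointness, which we know can fail. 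So the doubling in \cref{coro:anti-sum-sa-reg} is genuinely stronger than what $S+T$ alone satisfies, and this direct route overshoots.

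The honest route, and the one I would actually carry out, is therefore the one sketched in the statement just before the proposition: invoke \cref{lem:pair_S_ess-sa}. Set $S' := S+T$ with initial domain $\Dom S\cap\Dom T$; it is symmetric and densely defined there. I claim the pair $(S', \text{something})$ can be arranged to satisfy conditions 1)--2) of \cref{defn:almost_(anti-)commute}, or more directly that the argument of \cref{lem:pair_S_ess-sa} applies: using the resolvent identity and the fact that $[S,T](S-i\mu)^{-1}$ is bounded, one shows that $(S-i\mu)^{-1}$ maps the core $\E$ (which we may take to be $\Dom T$) into $\Dom S'\cap\Dom(\cdot)$ and that the relevant resolvents of $S'=S+T$ have dense range intersected with $\Dom S\cap\Dom T$. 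Concretely: for $\xi\in\Dom T$, $(S-i\mu)^{-1}\xi\in\Dom S\cap\Dom T$ by condition 2), and boundedness of $[S,T](S-i\mu)^{-1}$ lets one verify $T(S-i\mu)^{-1}\xi\in\Dom S$, hence $(S-i\mu)^{-1}\xi\in\Dom(S+T)$; feeding this through the symmetric operator $S+T$ and the density of $\Dom S$ shows the image of $(S+T\pm i)|_{\Dom S\cap\Dom T}$ is dense, whence essential self-adjointness. The main obstacle I anticipate is precisely this last step — controlling $T(S-i\mu)^{-1}$ well enough to land inside $\Dom S$ (so that $S+T$ is even defined on these resolvent vectors), using only the hypothesis that $\{$or rather $[\,]\}$ $[S,T](S-i\mu)^{-1}$ is bounded; this is exactly the kind of resolvent bookkeeping that \cref{lem:comm_anti-comm}'s proof does in the doubled picture, and I would either adapt that computation directly or, cleanest of all, deduce essential self-adjointness of $S+T$ from the \emph{already-proven} self-adjointness of $\til S+\til T$ by noting that $\til S+\til T$ restricted to the diagonal-type subspace encodes $S+T$ and that $S+T$ on $\Dom S\cap\Dom T$ is symmetric with $(S+T)^{**}$ sitting between $S+T$ and the self-adjoint operator produced by \cref{coro:anti-sum-sa-reg}, forcing $(S+T)^* = (S+T)^{**}$.
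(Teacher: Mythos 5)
There is a genuine gap. Your instinct that the doubling route ``overshoots'' is right, but note that the unwinding itself is also off: $\til S+\til T=\mattwo{0}{iS+T}{-iS+T}{0}$ has off-diagonal entries $iS+T$ and $-iS+T$, so its self-adjointness is just \cref{thm:sum-sa-reg} (with the roles of $S$ and $T$ swapped) in disguise; it is not a unitary conjugate of an ampliation of $S+T$ and says nothing directly about $S+T$. The real problem is with the route you then commit to. Showing that $(S-i\mu)^{-1}\xi\in\Dom(S+T)$ for $\xi$ in a core does \emph{not} give density of the ranges of $(S+T)\pm i$ on $\Dom S\cap\Dom T$. In \cref{lem:pair_S_ess-sa} the range of $(S\pm i)|_{\Dom S\cap\Dom T}$ contains $\E$ for the trivial reason that $(S\pm i)^{-1}$ inverts $S\pm i$; for $S+T$ you have no inverse in hand, and $(S+T\pm i)(S-i\mu)^{-1}\E$ is not obviously dense --- proving that it is would essentially be proving the proposition. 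Your fallback is also invalid: $(S+T)^{**}$ is just $\overline{S+T}$, and a symmetric operator sandwiched between its closure and a self-adjoint extension need not be essentially self-adjoint (think of $-i\,d/dx$ on $C_c^\infty(0,1)$ inside any of its self-adjoint extensions); moreover \cref{coro:anti-sum-sa-reg} applies to almost \emph{anti}-commuting pairs, so it does not furnish a self-adjoint extension of $S+T$ for the almost commuting pair $(S,T)$, and $\til S+\til T$ has no invariant ``diagonal-type subspace'' on which it restricts to $S+T$.

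The missing idea is the mollification argument of \cite[Proposition 7.7]{KL12}, which the paper adapts. One shows $\Dom(S+T)^*\subset\Dom\overline{(S+T)}$ directly: for $\xi\in\Dom(S+T)^*$ set $\xi_n:=(1-\tfrac in S)^{-1}\xi\in\Dom S$, so $\xi_n\to\xi$ in norm. Pairing $\xi_n$ against $T\eta$ for $\eta\in\Dom T$ and commuting the resolvent past $T$ produces the remainder $R_n:=(\tfrac in S+1)^{-1}[\tfrac in S,T](\tfrac in S+1)^{-1}$, which is bounded by the almost-commuting hypothesis and tends to $0$ strongly. This computation shows $\xi_n\in\Dom T$ with $(S+T)\xi_n=(1-\tfrac in S)^{-1}(S+T)^*\xi-R_n^*\xi\to(S+T)^*\xi$, i.e.\ $\xi\in\Dom\overline{(S+T)}$. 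This quantitative control of the commutator along the resolvents of $S$ is exactly the step your sketch gestures at but does not supply.
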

\begin{proof}
The statement follows from a straightforward adaptation of the proof of \cite[Proposition 7.7]{KL12}, which we include here for completeness. We know that $S+T$ is symmetric on $\Dom S\cap\Dom T$, so it suffices to prove that $\Dom(S+T)^*\subset\Dom\bar{(S+T)}$. Let $\xi\in\Dom(S+T)^*$, and define the sequence
$$
\xi_n := \Big(-\frac inS+1\Big)^{-1} \xi \in \Dom S ,
$$
which converges in norm to $\xi$. For $\eta\in\Dom T$, we can calculate
\begin{align*}
\la\xi_n,T\eta\ra &= \Big\la \xi,\Big(\frac in S+1\Big)^{-1} T\eta \Big\ra 
= \Big\la \xi , T \Big(\frac in S+1\Big)^{-1} \eta \Big\ra 
- \Big\la \xi , \Big(\frac in S+1\Big)^{-1} \Big[\frac in S,T\Big] \Big(\frac in S+1\Big)^{-1} \eta \Big\ra \\
&= \Big\la \xi , (S+T) \Big(\frac in S+1\Big)^{-1} \eta \Big\ra 
- \Big\la \xi , S \Big(\frac in S+1\Big)^{-1} \eta \Big\ra 
- \la \xi , R_n \eta \ra \\
&= \Big\la \Big(-\frac in S+1\Big)^{-1} (S+T)^* \xi , \eta \Big\ra 
- \Big\la S \Big(-\frac in S+1\Big)^{-1} \xi , \eta \Big\ra - \la R_n^* \xi , \eta \ra ,
\end{align*}
where $R_n := (\frac in S+1)^{-1} [\frac in S,T] (\frac in S+1)^{-1}$ is defined as in \cite[Lemma 7.4]{KL12}. This proves that $\xi_n$ is in the domain of $T^*=T$, and that
$$
T\xi_n = \Big(-\frac in S+1\Big)^{-1} (S+T)^* \xi - S \xi_n - R_n^* \xi .
$$
In \cite[Lemma 7.4]{KL12} it is shown that $R_n\to 0$ strongly, and hence
$$
(S+T)\xi_n = \Big(-\frac in S+1\Big)^{-1} (S+T)^* \xi - R_n^* \xi
$$
converges in norm to $(S+T)^*\xi$, which in particular means that $\xi = \lim_{n\to\infty}\xi_n \in \Dom\bar{(S+T)}$. 
\end{proof}

\section{Indefinite Kasparov modules}
\label{sec:indef_KK}
We are now ready to present our framework of indefinite Kasparov modules, after
which we define pairs of Kasparov modules, and show that these definitions are equivalent. 
\begin{defn}
\label{defn:indef_Kasp_mod}
Given (separable) $\Z_2$-graded $C^*$-algebras $A$ and $B$, an \emph{indefinite} unbounded Kasparov 
$A$-$B$-module $(\A,{}_\pi E_B,\D)$ is given by\vspace{-8pt}
\begin{itemize}
\item a $\Z_2$-graded, countably generated, right Hilbert $B$-module $E$;\vspace{-6pt}
\item a $\Z_2$-graded $*$-homomorphism $\pi\colon A\to\End_B(E)$;\vspace{-6pt}
\item a separable dense $*$-subalgebra $\A\subset A$;\vspace{-6pt}
\item a closed, regular, odd operator $\D\colon\Dom\D\subset E\to E$ such that \\
1) there exists a linear subspace $\E\subset\Dom\D\cap\Dom\D^*$ which is dense with respect to 
$\|\cdot\|_{\D,\D^*}$, and which is a core for both $\D$ and $\D^*$;\\
2) the operators $\Re\D$ and $\Im\D$ are regular and essentially self-adjoint on $\E$; \\
3) the pair $(\Re\D,\Im\D)$ is an \emph{almost anti-commuting pair}; \\
4) we have the inclusion 
$\pi(\A)\cdot\E\subset\Dom\D\cap\Dom\D^*$, and the graded commutators 
$[\D,\pi(a)]_\pm$ and $[\D^*,\pi(a)]_\pm$ are bounded on $\E$ for each $a\in\A$;\\
5) the map $\pi(a)\circ\iota\colon \Dom\D\cap\Dom\D^*\into E\to E$ is 
compact for each $a\in A$, where $\iota\colon \Dom\D\cap\Dom\D^*\into E$ 
denotes the natural inclusion map, and $\Dom\D\cap\Dom\D^*$ is considered as a 
Hilbert $B$-module with the inner product $(\cdot|\cdot)_{\D,\D^*}$.
\end{itemize}
\vspace{-8pt}
If no confusion arises, we will often write $(\A,E_B,\D)$ instead of $(\A,{}_\pi E_B,\D)$. 
If $B=\C$ and $A$ is trivially graded, we will write 
$E=\mH$ and refer to $(\A,\mH,\D)$ as an even indefinite spectral triple over $A$. 
\end{defn}
\begin{remark}
If $\D$ is self-adjoint, this is just the usual definition of an 
unbounded Kasparov $A$-$B$-module (or spectral
triple if $B=\C$). 
In this case, note that assumption 5) is equivalent to the more commonly used 
assumption that the resolvent of $\D$ is \emph{locally compact} 
(which means that the operator $\pi(a) (1+\D^2)^{-1/2}$ is compact for each $a\in A$). 

\label{rem:indef_Kasp_mod_dom_eq}
Property 1) and \cref{lem:common_core} imply that we have the equality $\Dom\D\cap\Dom\D^* = \Dom\Re\D\cap\Dom\Im\D$, which we will use repeatedly. 
\end{remark}
\begin{defn}
\label{defn:unit-equiv}
Two indefinite unbounded Kasparov $A$-$B$-modules 
$(\A,E_1,\D_1)$ and $(\A,E_2,\D_2)$ are called 
\emph{unitarily equivalent} if there exists an even unitary 
$U\colon E_1\to E_2$ such that $\D_2 = U\D_1U^*$ and for all $a\in\A$ we have
$\pi_2(a) = U\pi_1(a)U^*$, where $\pi_i\colon A\to\End_B(E_i)$ 
denotes the left action of $A$ (for $i=1,2$). 
\end{defn}
Next, we will show that the linear subspace $\E$ in \cref{defn:indef_Kasp_mod} can always be replaced by $\Dom\D\cap\Dom\D^*$. The trickiest part turns out to be condition 4), for which we prove a separate lemma first. 
\begin{lem}[cf.\ {\cite[Proposition 2.1]{FMR14}}]
\label{lem:dense_bdd_comm}
Let $\D$ be a closed regular operator on a Hilbert $B$-module $E$ such that 
$\Dom\D\cap\Dom\D^*$ is dense. Let $\E\subset\Dom\D\cap\Dom\D^*$ be 
dense with respect to the norm $\|\cdot\|_{\D,\D^*}$, and let 
$\A\subset\End_B(E)$ be a $*$-subalgebra. 
Suppose $\A\cdot\E\subset\Dom\D\cap\Dom\D^*$, 
and for each $a\in\A$ the operators $[\D,a]$ and $[\D^*,a]$ are bounded on $\E$. 
Then $\A$ also preserves $\Dom\D\cap\Dom\D^*$, and $[\D,a]$ and $[\D^*,a]$, 
initially defined on $\Dom\D\cap\Dom\D^*$, extend to bounded endomorphisms on $E$, for all $a\in\A$. 
\end{lem}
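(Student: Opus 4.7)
My plan is as follows.

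The first observation is that since the norm $\|\cdot\|_{\D,\D^*}$ dominates the Hilbert module norm on $E$, density of $\E$ in $\Dom\D\cap\Dom\D^*$ with respect to $\|\cdot\|_{\D,\D^*}$, together with density of $\Dom\D\cap\Dom\D^*$ in $E$, implies that $\E$ is norm-dense in $E$. Consequently, for each $a\in\A$ the operators $[\D,a]$ and $[\D^*,a]$, which by hypothesis are bounded when initially defined on $\E$, extend uniquely to bounded endomorphisms $T_a,S_a\in\End_B(E)$. (Adjointability of $T_a$ and $S_a$ follows from the standard identity $[\D,a]^* = -[\D^*,a^*]$ on suitable vectors, which by density transfers to the extensions.)

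Next I would show that $\A$ preserves $\Dom\D\cap\Dom\D^*$ by a closedness argument. Fix $a\in\A$ and $\xi\in\Dom\D\cap\Dom\D^*$. By density of $\E$ in the $\|\cdot\|_{\D,\D^*}$-norm, pick a sequence $(\xi_n)\subset\E$ with $\xi_n\to\xi$, $\D\xi_n\to\D\xi$, and $\D^*\xi_n\to\D^*\xi$. The assumption $\A\cdot\E\subset\Dom\D\cap\Dom\D^*$ gives $a\xi_n\in\Dom\D\cap\Dom\D^*$, and on $\E$ we have the identities
\begin{align*}
\D(a\xi_n) &= a\D\xi_n + [\D,a]\xi_n = a\D\xi_n + T_a\xi_n, \\
\D^*(a\xi_n) &= a\D^*\xi_n + [\D^*,a]\xi_n = a\D^*\xi_n + S_a\xi_n.
\end{align*}
Using boundedness of $a$, $T_a$, and $S_a$, the right-hand sides converge in $E$ to $a\D\xi+T_a\xi$ and $a\D^*\xi+S_a\xi$ respectively, while $a\xi_n\to a\xi$. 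Closedness of $\D$ and $\D^*$ then yields $a\xi\in\Dom\D\cap\Dom\D^*$, together with
\begin{align*}
\D(a\xi) &= a\D\xi + T_a\xi, & \D^*(a\xi) &= a\D^*\xi + S_a\xi.
\end{align*}

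In particular, the commutators $[\D,a]$ and $[\D^*,a]$ are well-defined on all of $\Dom\D\cap\Dom\D^*$ and agree there with the bounded endomorphisms $T_a$ and $S_a$. This is exactly the claim of the lemma.

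The only real subtlety is bookkeeping: one must not conflate the a priori abstract extensions $T_a,S_a$ (obtained by $E$-norm density of $\E$) with the genuine commutators on $\Dom\D\cap\Dom\D^*$ until the closedness argument identifies them. Apart from this, the proof is a straightforward density-plus-closedness argument with no analytic obstacles.
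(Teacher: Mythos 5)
Your argument is correct and is essentially the paper's own proof, just reorganized: the paper identifies the limit $a\psi_n\to a\psi$ via completeness of $\Dom\D\cap\Dom\D^*$ in $\|\cdot\|_{\D,\D^*}$ and then proves boundedness of the commutator on the larger domain through a densely defined adjoint, while you use closedness of $\D$ and $\D^*$ directly and identify the commutators with the pre-extended bounded operators $T_a,S_a$ --- the same density-plus-closedness mechanism, with the same identity $([\D,a]\xi|\eta)=(\xi|-[\D^*,a^*]\eta)$ (valid since $a^*\in\A$ maps $\E$ into $\Dom\D\cap\Dom\D^*$) underlying adjointability in both treatments. No gaps.
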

\begin{proof}
The proof is a straightforward adaptation of \cite[Proposition 2.1]{FMR14}, which proves the statement for the case of self-adjoint operators on a Hilbert space. For completeness we will work out the details here. 

Let $\psi\in\Dom\D\cap\Dom\D^*$. By assumption there exists a sequence $\psi_n\in\E$ such that $\psi_n\to\psi$ in the norm $\|\cdot\|_{\D,\D^*}$, which is equivalent to $\psi_n\to\psi$, $\D\psi_n\to\D\psi$, and $\D^*\psi_n\to\D^*\psi$, in the usual norm. The sequence $\D a\psi_n$ is Cauchy (in the usual norm), since
\begin{align*}
\|\D a\psi_n - \D a\psi_m\| = \|a\D\psi_n - a\D\psi_m + [\D,a]\psi_n - [\D,a]\psi_m\| \leq \|a\| \|\D\psi_n-\D\psi_m\| + \|[\D,a]\| \|\psi_n-\psi_m\| ,
\end{align*}
and similarly $\D^*a\psi_n$ is also Cauchy. Hence the sequence $a\psi_n\in\Dom\D\cap\Dom\D^*$ is Cauchy in the norm $\|\cdot\|_{\D,\D^*}$, so there exists a $\xi\in\Dom\D\cap\Dom\D^*$ such that $a\psi_n\to\xi$ in the norm $\|\cdot\|_{\D,\D^*}$. But this implies that $a\psi_n\to\xi$ in the usual norm, and since we already know that $a\psi_n\to a\psi$ in the usual norm, we conclude that $\xi = a\psi$, and hence $a\psi \in \Dom\D\cap\Dom\D^*$. Thus we have shown that $a$ preserves $\Dom\D\cap\Dom\D^*$. 

To conclude that $[\D,a]$ (and similarly $[\D^*,a]$), initially defined on $\Dom\D\cap\Dom\D^*$, extends to a bounded endomorphism, it suffices to show that its adjoint is densely defined, since then it is closable, 
and $\bar{[\D,a]} \supset \bar{[\D,a]|_\E}$, which is everywhere defined and bounded. For $\psi\in\Dom\D$ and $\eta\in\E$, we have
$$
([\D,a]\psi| \eta) = ( \D a\psi|\eta)  - (a\D\psi|\eta) = (\psi|a^*\D^*\eta) - (\psi|\D^*a^*\eta) = (\psi|-[\D^*,a^*]\eta) ,
$$
which is well-defined because $a^*\in\A$ maps $\E$ to 
$\Dom\D\cap\Dom\D^*$. Hence the domain of $[\D,a]^*$ 
contains the dense subset $\E$, which implies that $[\D,a]$ is closable. 
The same argument applies to $[\D^*,a]$. 
\end{proof}
\begin{prop}
\label{prop:complete_E}
If $(\A,E_B,\D)$ is an indefinite unbounded Kasparov $A$-$B$-module, then the subset $\E$ in \cref{defn:indef_Kasp_mod} can be replaced by $\Dom\D\cap\Dom\D^*$. 
\end{prop}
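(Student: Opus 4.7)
The plan is to verify each of the five conditions in \cref{defn:indef_Kasp_mod} with $\E$ replaced by $\mathcal{F} := \Dom\D\cap\Dom\D^*$. Condition 5) does not involve $\E$ at all, so there is nothing to check. Condition 1) is automatic: $\mathcal{F}$ is trivially dense in itself in $\|\cdot\|_{\D,\D^*}$, and any subset of $\Dom\D\cap\Dom\D^*$ containing a core for $\D$ (resp.\ $\D^*$) is itself a core.

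For condition 2), I would first invoke \cref{lem:common_core} (via \cref{rem:indef_Kasp_mod_dom_eq}) to identify $\mathcal{F}$ with $\Dom\Re\D\cap\Dom\Im\D$. The chain of inclusions $\E\subset\mathcal{F}\subset\Dom\Re\D$ yields
$(\Re\D)|_\E \subset (\Re\D)|_\mathcal{F} \subset \Re\D$, so passing to closures gives $\Re\D \subset \overline{(\Re\D)|_\mathcal{F}} \subset \Re\D$, where the first inclusion uses the hypothesis that $\E$ is a core. Hence $\Re\D$ is also essentially self-adjoint on $\mathcal{F}$, and similarly for $\Im\D$. Regularity is a property of the closures themselves and is unaffected by the choice of core.

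For condition 3), the key observation is that by \cite[Proposition 7.3]{KL12} (already used inside the proof of \cref{lem:comm_anti-comm}), the submodule appearing in \cref{defn:almost_(anti-)commute} can always be enlarged to all of $\Dom\Im\D$. Since $\mathcal{F}\subset\Dom\Im\D$ and, by the preceding step, $\mathcal{F}$ is itself a core for $\Im\D$, the almost anti-commuting condition restricts to $\mathcal{F}$ without loss: the domain inclusions in clause 2) of \cref{defn:almost_(anti-)commute} transfer from $\Dom\Im\D$ to its subset $\mathcal{F}$, and the boundedness of $\{\Re\D,\Im\D\}(\Re\D-i\mu)^{-1}$ is trivially inherited on any subset of its original domain of definition.

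The only genuinely non-trivial step, and the one I expect to be the main obstacle, is condition 4): passing from the invariance of $\E$ and the boundedness of the commutators on $\E$ to the analogous statements for the larger domain $\mathcal{F}$. This is handled precisely by \cref{lem:dense_bdd_comm}, applied to the $*$-subalgebra $\pi(\A)\subset\End_B(E)$: its hypotheses are exactly the density of $\E$ in $\mathcal{F}$ with respect to $\|\cdot\|_{\D,\D^*}$ (given by condition 1)) together with the invariance and boundedness assumptions on $\E$ (given by condition 4) of the original definition). The conclusion of that lemma is that $\pi(\A)$ preserves $\mathcal{F}$ and that $[\D,\pi(a)]$ and $[\D^*,\pi(a)]$ extend from $\mathcal{F}$ to bounded endomorphisms of $E$, which is exactly what is required.
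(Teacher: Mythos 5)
Your proof is correct and follows essentially the same route as the paper: condition 1) is immediate, condition 2) follows from the sandwich argument for cores of $\Re\D$ and $\Im\D$, and condition 4) is exactly \cref{lem:dense_bdd_comm}. Your extra treatment of condition 3) (via \cite[Proposition 7.3]{KL12}) is sound but not needed in the paper's proof, since that condition does not refer to the specific subspace $\E$.
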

\begin{proof}
If $\E\subset\Dom\D\cap\Dom\D^*$ is a core for $\D$ and $\D^*$, then so is 
$\Dom\D\cap\Dom\D^*$. By construction,  
$\Dom\D\cap\Dom\D^*$ is contained in the domains of 
$\Re\D$ and $\Im\D$, so the operators $\Re\D$ and 
$\Im\D$ are also essentially self-adjoint on $\Dom\D\cap\Dom\D^*$. 
Using \cref{lem:dense_bdd_comm} then concludes the proof. 
\end{proof}
\subsection{Pairs of Kasparov modules}
\begin{defn}
\label{defn:even_pair}
We say $(\A,E_B,\D_1,\D_2)$ is a 
\emph{pair of unbounded Kasparov $A$-$B$-modules} if $(\A,E_B,\D_1)$ and 
$(\A,E_B,\D_2)$ are unbounded Kasparov $A$-$B$-modules such that:\\
1) there exists a linear subspace $\E\subset\Dom\D_1\cap\Dom\D_2$ which is a common core for $\D_1$ and $\D_2$;\\
2) the operators $\D_1+\D_2$ and $\D_1-\D_2$ are regular and essentially self-adjoint on $\E$; \\
3) the pair $(\D_1+\D_2,\D_1-\D_2)$ is an almost anti-commuting pair. \\
If $B=\C$ and $A$ is trivially graded, we will write 
$E=\mH$ and refer to $(\A,\mH,\D_1,\D_2)$ as an even pair of spectral triples over $A$. 
\end{defn}
\begin{remark}
\label{remark:even_pair}
Since $(\D_1+\D_2,\D_1-\D_2)$ is 
an almost anti-commuting pair, it follows from 
\cref{coro:anti-sum-sa-reg} that in fact $\Dom\D_1 = \Dom\D_2$.  
Similarly to \cref{prop:complete_E}, we can then replace $\E$ by $\Dom\D_1=\Dom\D_2$. 
\end{remark}
\begin{prop}[Wick rotation]
\label{prop:indef-wick}
Let $(\A,E_B,\D)$ be an indefinite unbounded Kasparov $A$-$B$-module. 
Then the Wick rotations $\D_+$ and $\D_-$ form a pair of unbounded Kasparov 
$A$-$B$-modules $(\A,E_B,\D_+,\D_-)$. 
\end{prop}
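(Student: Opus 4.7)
The plan is to verify, in order, that each of the Wick rotations yields an unbounded Kasparov module $(\A, E_B, \D_\pm)$ in the usual sense, and then to confirm the three further conditions of \cref{defn:even_pair} that make them into a pair.

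The self-adjointness and regularity of $\D_\pm$ on $\Dom\Re\D \cap \Dom\Im\D = \Dom\D\cap\Dom\D^*$ is immediate from \cref{coro:anti-sum-sa-reg}, applied to the almost anti-commuting pair $(\Re\D, \Im\D)$ furnished by condition 3) of \cref{defn:indef_Kasp_mod}, with the domain identification coming from \cref{rem:indef_Kasp_mod_dom_eq}. On this common domain $\pi(\A)$ acts with bounded graded commutators $[\D_\pm, \pi(a)]_\pm$, since by \cref{prop:complete_E} and condition 4) of \cref{defn:indef_Kasp_mod} the commutators $[\D, \pi(a)]_\pm$ and $[\D^*, \pi(a)]_\pm$ are already bounded endomorphisms, and $\D_\pm$ is a real linear combination of $\D$ and $\D^*$.

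The main obstacle is local compactness, namely that $\pi(a)(1+\D_\pm^2)^{-1/2}$ is compact for every $a \in A$. Condition 5) of \cref{defn:indef_Kasp_mod} only yields compactness of $\pi(a) \circ \iota$ with respect to the potentially stronger graph norm $\|\cdot\|_{\D,\D^*}$ on $\Dom\D\cap\Dom\D^*$, so the task is to show that $(1+\D_\pm^2)^{-1/2}$ maps $E$ boundedly into this Hilbert module. The key step is to argue that the two graph norms $\|\cdot\|_{\D_+}$ and $\|\cdot\|_{\D_-}$ are equivalent on their common domain $\Dom\D_+ = \Dom\D_-$: both render it a complete Banach space and the identity map between them has closed graph, so the closed graph theorem delivers equivalence. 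Consequently $\D_\mp (1+\D_\pm^2)^{-1/2}$ extends to a bounded endomorphism of $E$. Combined with the identity $(\cdot|\cdot)_{\D_+,\D_-} = (\cdot|\cdot)_{\D,\D^*}$ recorded in \cref{lem:norms_D}, this shows that $(1+\D_\pm^2)^{-1/2}\colon E \to (\Dom\D\cap\Dom\D^*, \|\cdot\|_{\D,\D^*})$ is bounded, and composing with the compact map $\pi(a) \circ \iota$ from condition 5) yields the required compactness.

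The additional pair conditions in \cref{defn:even_pair} are then a rephrasing of the hypotheses in \cref{defn:indef_Kasp_mod}: a common core is supplied by $\Dom\D_+ = \Dom\D_- = \Dom\D\cap\Dom\D^*$; the combinations $\D_+ + \D_- = 2\Re\D$ and $\D_+ - \D_- = 2\Im\D$ are essentially self-adjoint by condition 2); and $(2\Re\D, 2\Im\D)$ forms an almost anti-commuting pair because $(\Re\D, \Im\D)$ does so by condition 3) and the almost anti-commutation condition is insensitive to nonzero scalar rescaling.
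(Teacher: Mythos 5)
Your proof is correct and follows essentially the same route as the paper: self-adjointness of $\D_\pm$ via \cref{coro:anti-sum-sa-reg}, bounded commutators from linearity in $\D$ and $\D^*$, local compactness from the identification of $\Dom\D_\pm$ with $(\Dom\D\cap\Dom\D^*,\|\cdot\|_{\D,\D^*})$, and the pair axioms from $\D_++\D_-=2\Re\D$, $\D_+-\D_-=2\Im\D$. If anything, your closed-graph argument for the equivalence of the single graph norms $\|\cdot\|_{\D_+}$ and $\|\cdot\|_{\D_-}$ makes explicit a step the paper compresses into the phrase ``with the same norm-topology''.
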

\begin{proof}
By assumption, the operators $\Re\D$ and $\Im\D$ are essentially self-adjoint on $\Dom\Re\D\cap\Dom\Im\D$, and they form an 
almost anti-commuting pair $(\Re\D,\Im\D)$. 
By construction, we have the domain inclusions
$$
\Dom\Re\D\cap\Dom\Im\D \subset \Dom\D_+\cap\Dom\D_- \subset \Dom(\D_++\D_-)\cap\Dom(\D_+-\D_-) .
$$
Since $\D_++\D_-$ and $\D_+-\D_-$ are symmetric extensions of the essentially self-adjoint operators $2\Re\D$ and $2\Im\D$ (respectively), we must have $\D_++\D_- = 2\Re\D$ and 
$\D_+-\D_- = 2\Im\D$, which implies that the above domain inclusions are in fact equalities. This also proves properties 2) and 3). 
By \cref{coro:anti-sum-sa-reg} 
it then follows that $\D_\pm = \Re\D\pm\Im\D$ are self-adjoint on the domain 
$\E := \Dom\D_\pm = \Dom\Re\D\cap\Dom\Im\D$, which shows property 1). 

To complete the proof that $\D_\pm$ yield unbounded Kasparov modules, 
first observe that $[\Re\D,a]$ and $[\Im\D,a]$ are bounded on 
$\Dom\D_\pm = \Dom\Re\D\cap\Dom\Im\D$, 
and hence it follows that $[\Re\D\pm\Im\D,a]$, 
initially defined on $\Dom\Re\D\cap\Dom\Im\D$, extend to bounded endomorphisms on $E$. 

Finally, we know from \cref{lem:common_core} that $\Dom\D_\pm=\Dom\Re\D\cap\Dom\Im\D$ is equal to $\Dom\D\cap\Dom\D^*$ (with the same norm-topology), and by assumption the map $\pi(a)\circ\iota\colon\Dom\D\cap\Dom\D^*\to E$ is compact. 
Thus the Wick rotations $(\A,E_B,\D_\pm)$ are indeed unbounded Kasparov modules. 
\end{proof}
\begin{prop}[reverse Wick rotation]
\label{prop:pair_to_indef}
Let $(\A,E_B,\D_1,\D_2)$ be a pair of unbounded Kasparov $A$-$B$-modules, 
and let $\D$ be the reverse Wick rotation of $(\D_1,\D_2)$. 
Then $(\A,E_B,\D)$ is an indefinite unbounded Kasparov $A$-$B$-module. 
\end{prop}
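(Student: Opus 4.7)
The plan is to verify the five conditions of \cref{defn:indef_Kasp_mod} in order, using $\E := \Dom\D_1\cap\Dom\D_2$ (which, by \cref{remark:even_pair}, equals $\Dom\D_1 = \Dom\D_2$). Write $R := \frac{1}{2}(\D_1+\D_2)$ and $I := \frac{1}{2}(\D_1-\D_2)$, initially on $\E$; their closures $\bar R, \bar I$ are regular self-adjoint, and the pair $(\bar R, \bar I)$ is almost anti-commuting by a trivial rescaling of the hypothesis on $(\overline{\D_1+\D_2}, \overline{\D_1-\D_2})$. Combining \cref{coro:anti-sum-sa-reg} with the maximality of self-adjoint operators among symmetric ones identifies $\bar R \pm \bar I = \D_{1,2}$ and yields $\Dom\bar R\cap\Dom\bar I = \Dom\D_1 = \E$. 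A direct formal-adjoint computation places $\E \subset \Dom\D^*$ with $\D^*\xi = (R - iI)\xi$ for $\xi \in \E$.

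The hard step is condition~1): although $\E$ is a core for $\D$ by construction of the reverse Wick rotation, one must still show it is a core for $\D^*$. My approach is a Pauli-matrix doubling trick. Since $\sigma_1\sigma_2 = -\sigma_2\sigma_1$, one has
$$
[\bar R\otimes\sigma_1, \bar I\otimes\sigma_2] = i\{\bar R,\bar I\}\otimes\sigma_3 ,
$$
and together with an explicit block form for $(\bar R\otimes\sigma_1 - i\mu)^{-1}$, the almost anti-commuting property of $(\bar R,\bar I)$ upgrades to show that $(\bar R\otimes\sigma_1, \bar I\otimes\sigma_2)$ is an almost commuting pair of regular self-adjoint operators on $E\oplus E$. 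By \cref{prop:sum-sa}, the sum
$$
\mathbb D := \bar R\otimes\sigma_1 + \bar I\otimes\sigma_2 = \mattwo{0}{\bar R - i\bar I}{\bar R + i\bar I}{0}
$$
is essentially self-adjoint on $(\Dom\bar R\cap\Dom\bar I)^{\oplus 2} = \E^{\oplus 2}$; computing the closure of this off-diagonal block matrix entry by entry and using self-adjointness of $\overline{\mathbb D}$ forces $\D^* = \overline{(R-iI)|_\E}$, so $\E$ is indeed a core for $\D^*$.

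Once $\E$ is a core for both $\D$ and $\D^*$, \cref{lem:common_core} gives $\Dom\D\cap\Dom\D^* = \Dom\Re\D\cap\Dom\Im\D$, and maximality of self-adjoint operators once more identifies $\Re\D = \bar R$, $\Im\D = \bar I$, so that $\Dom\D\cap\Dom\D^* = \E$ with $\E$ trivially dense in $\|\cdot\|_{\D,\D^*}$. Conditions~2) and 3) are then immediate from the pair hypotheses. Condition~4) follows from the identity $[\D,\pi(a)] = \frac{1+i}{2}[\D_1,\pi(a)] + \frac{1-i}{2}[\D_2,\pi(a)]$ on $\E$ (and the analogous formula for $[\D^*,\pi(a)]$), combined with $\pi(\A)\cdot\E \subset \E$ from the Kasparov-module structure of each $\D_j$. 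For condition~5), \cref{lem:norms-1-2} yields equivalence of $\|\cdot\|_{\D,\D^*}$ with $\|\cdot\|_{\D_1,\D_2}$ on $\E$, which dominates the graph norm $\|\cdot\|_{\D_1}$; hence $\pi(a)\circ\iota$ factors through the compact endomorphism provided by $(\A,E_B,\D_1)$ and is itself compact.
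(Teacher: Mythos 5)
Your proposal is correct and follows essentially the same route as the paper: identify $\D_1=\D_+$, $\D_2=\D_-$ by maximality of self-adjoint operators, verify the commutator and local-compactness conditions directly from the pair hypotheses via \cref{lem:norms-1-2}, and obtain the core property for $\D^*$ by doubling the almost anti-commuting pair into an almost commuting one and invoking \cref{prop:sum-sa}. Your Pauli-matrix doubling $(\bar R\otimes\sigma_1,\bar I\otimes\sigma_2)$ is just a unitarily equivalent rewriting of the operators $\til S,\til T$ in \cref{lem:comm_anti-comm}, so you are re-deriving that lemma rather than citing it, but the argument is the same.
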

\begin{proof}
As mentioned in \cref{remark:even_pair}, we can pick $\E = \Dom\D_1 = \Dom\D_2$. 
By construction, we have the domain inclusions
$$
\E \subset \Dom\D\cap\Dom\D^* \subset \Dom\Re\D\cap\Dom\Im\D \subset \Dom(\Re\D+\Im\D)\cap\Dom(\Re\D-\Im\D) .
$$
The operators $\D_+=\Re\D+\Im\D$ and $\D_-=\Re\D-\Im\D$ are symmetric extensions of the self-adjoint operators $\D_1$ and $\D_2$, and hence $\D_1 = \D_+$ and $\D_2=\D_-$. This implies that the above domain inclusions are in fact equalities. By definition, $\E$ is a core for the reverse Wick rotation $\D$. 
On this domain we can write
\begin{align*}
\Re\D \,\psi &= \frac12 (\D_1+\D_2) \psi , & \Im\D\, \psi &= \frac12 (\D_1-\D_2) \psi .
\end{align*}
Thus by assumption the operators $\Re\D$ and $\Im\D$ are 
essentially self-adjoint on $\E$, and they form an almost anti-commuting pair $(\Re\D,\Im\D)$. Since 
$\D_1$ and $\D_2$ have bounded commutators 
with  $\A$, it follows 
immediately that $\Re\D$ and $\Im\D$ also have 
bounded commutators with $\A$. We observe 
that the identity map 
$(\E,\|\cdot\|_{\D_1,\D_2})\to(\Dom\D_1,\|\cdot\|_{\D_1})$ 
is continuous, because the graph norm of $\D_1$ is 
bounded by the norm $\|\cdot\|_{\D_1,\D_2}$ on 
$\E$ (and similarly for $\D_2$).  
Since $\D_1$ (or $\D_2$) has locally compact resolvent, 
it then follows that the map $\pi(a)\circ\iota\colon\E\into E\to E$ 
is compact for each $a\in\A$.
Finally, it remains to show that $\E$ is a core for $\D^*$. Since $(\Re\D,\Im\D)$ is an almost anti-commuting pair, we can use the `doubling trick' and apply \cref{prop:sum-sa} to conclude that 
$$
\mattwo{0}{i(\Re\D-i\Im\D)}{-i(\Re\D+i\Im\D)}{0}
$$
is essentially self-adjoint on $\E\oplus\E$. Thus $\D^* = (\Re\D+i\Im\D)^*$ equals the closure of $\Re\D-i\Im\D$ on $\E$, so $\E$ is indeed a core for $\D^*$. 
\end{proof}
\begin{remark}
Let $(\A,E_B,\D)$ be an indefinite unbounded 
Kasparov module with Wick rotations $\D_+$ and $\D_-$. 
Denote by $\til\D$ the reverse Wick rotation of $(\D_+,\D_-)$. By construction we then have the domain inclusions
$$
\Dom\D\cap\Dom\D^* \subset \Dom\Re\D\cap\Dom\Im\D \subset \Dom\D_+\cap\Dom\D_- \subset \Dom\til\D\cap\Dom\til\D^*
$$
As in the proof of \cref{lem:common_core}, the assumption that $\Dom\D\cap\Dom\D^*$ is a core for both $\D$ and $\D^*$ implies that $\til\D = \D$.
Thus, 
this assumption ensures that our procedure of Wick rotation is reversible. 

We can consider unitary equivalences of indefinite 
Kasparov modules or pairs of Kasparov modules as in \cref{defn:unit-equiv}, 
and one easily sees that Wick rotations and reverse Wick rotations respect such unitary equivalences. 
\end{remark}
Combining these observations with \cref{prop:indef-wick,prop:pair_to_indef}, 
we can summarise our results as follows:
\begin{thm}
\label{thm:even_indef_pair_bij}
The procedure of (reverse) Wick rotation implements a bijection between
 indefinite unbounded Kasparov $A$-$B$-modules $(\A,E_B,\D)$ and 
 pairs of unbounded Kasparov $A$-$B$-modules $(\A,E_B,\D_1,\D_2)$. 
This bijection also descends to the corresponding unitary equivalence classes.
\end{thm}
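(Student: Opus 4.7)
The plan is to read this theorem off as essentially an immediate consequence of \cref{prop:indef-wick,prop:pair_to_indef}. Those propositions supply the two maps
\[
(\A,E_B,\D) \longmapsto (\A,E_B,\D_+,\D_-), \qquad (\A,E_B,\D_1,\D_2) \longmapsto (\A,E_B,\D),
\]
where $\D_\pm = \Re\D\pm\Im\D$ in the first map and $\D$ is the reverse Wick rotation of the pair in the second. The remaining work is to verify that these two assignments are mutual inverses and that both descend to unitary equivalence classes.

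For the composition indefinite $\to$ pair $\to$ indefinite, I would start from $(\A,E_B,\D)$, form the pair $(\D_+,\D_-)$, and then denote by $\til\D$ its reverse Wick rotation. As noted in the remark immediately preceding the theorem, one has the domain chain
\[
\Dom\D\cap\Dom\D^* \subset \Dom\Re\D\cap\Dom\Im\D \subset \Dom\D_+\cap\Dom\D_- \subset \Dom\til\D\cap\Dom\til\D^* ,
\]
and the core assumption in condition 1) of \cref{defn:indef_Kasp_mod} allows one to rerun the argument of \cref{lem:common_core} to conclude $\til\D=\D$.

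For the composition pair $\to$ indefinite $\to$ pair, the identification is already contained in the proof of \cref{prop:pair_to_indef}: on the common core $\E=\Dom\D_1=\Dom\D_2$ one has $\Re\D=\frac12(\D_1+\D_2)$ and $\Im\D=\frac12(\D_1-\D_2)$, so that
\[
\D_+ = \Re\D+\Im\D = \D_1 \quad\text{and}\quad \D_- = \Re\D-\Im\D = \D_2
\]
on $\E$. Since $\D_\pm$ are self-adjoint by \cref{prop:indef-wick} and $\D_1,\D_2$ are self-adjoint by hypothesis, and since two self-adjoint operators that agree on a common core must coincide, the equalities hold on the full domains.

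For unitary equivalence it suffices to observe that both operations $\D\mapsto\D_\pm$ and $(\D_1,\D_2)\mapsto\D$ are built out of adjoints and linear combinations, all of which commute with conjugation by an even unitary; an intertwiner $U$ for the indefinite module therefore induces an intertwiner for the Wick-rotated pair, and vice versa. I expect no serious obstacle beyond keeping careful track, when invoking cores and essential self-adjointness, of precisely which domains the various identities hold on — everything substantive has already been done in the preceding propositions.
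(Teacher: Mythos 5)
Your proposal is correct and follows essentially the same route as the paper: the two propositions supply the maps, the domain-chain plus the core assumption (rerunning \cref{lem:common_core}) gives $\til\D=\D$, the agreement of $\D_\pm$ with $\D_1,\D_2$ on the common core together with self-adjointness (maximal symmetry) gives the other composition, and compatibility with unitary equivalence is immediate. No gaps worth noting.
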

\subsection{Odd indefinite Kasparov modules}
\label{sec:indef_odd}
We introduce an odd version of indefinite Kasparov modules, 
where all $\Z_2$-gradings are trivial, and the operator 
$\D$ is (of course) no longer assumed to be odd. 
\begin{defn}
\label{defn:odd_indef_Kasp_mod}
Given  trivially graded $C^*$-algebras $A$ and $B$, an 
\emph{odd indefinite} unbounded Kasparov $A$-$B$-module $(\A,E_B,\D)$ is given by\vspace{-8pt}
\begin{itemize}
\item a  trivially graded, countably generated, right Hilbert $B$-module $E$;\vspace{-6pt}
\item a $*$-homomorphism $\pi\colon A\to\End_B(E)$;\vspace{-6pt}
\item a separable dense $*$-subalgebra $\A\subset A$;\vspace{-6pt}
\item a closed, regular operator $\D\colon\Dom\D\subset E\to E$ such that \\
1)  there exists a linear subspace $\E\subset\Dom\D\cap\Dom\D^*$ which is dense 
in the norm $\|\cdot\|_{\D,\D^*}$ and which is a core for both $\D$ and $\D^*$;\\
2) the operators $\Re\D$ and $\Im\D$ are regular and essentially self-adjoint on $\E$; \\
3) the pair $(\Re\D,\Im\D)$ is an \emph{almost commuting pair}; \\
4) we have the inclusion $\pi(\A)\cdot\E\subset\Dom\D\cap\Dom\D^*$, 
and the commutators $[\D,\pi(a)]$ and $[\D^*,\pi(a)]$ 
are bounded on $\E$ for each $a\in\A$;\\
5) the map $\pi(a)\circ\iota\colon \Dom\D\cap\Dom\D^*\into E\to E$ is 
compact for each $a\in A$, where $\iota\colon \Dom\D\cap\Dom\D^*\into E$ 
denotes the natural inclusion map, and $\Dom\D\cap\Dom\D^*$ is considered as a 
Hilbert module with the inner product $(\cdot|\cdot)_{\D,\D^*}$.
\end{itemize}
If $B=\C$, we will write $E=\mH$ and refer to $(\A,\mH,\D)$ 
as an odd indefinite spectral triple over $A$. 
If $\D$ is self-adjoint, we recover the definition of an odd unbounded Kasparov module (or odd spectral triple). 
\end{defn}
\begin{remark}
\label{remark:odd_indef_Kasp_mod}
We emphasise that, in the odd case, the pair $(\Re\D,\Im\D)$ is assumed to almost 
\emph{commute} (instead of almost \emph{anti}-commute). This assumption
can be reinterpreted as saying that the commutator $[\D,\D^*]$ is relatively bounded 
by the sum $\D+\D^*$; in this sense $\D$ is \emph{`almost normal'}.  

It follows from 3) and 
\cref{thm:sum-sa-reg} 
that in fact we have $\Dom\D=\Dom\D^*$, and 
as in \cref{prop:complete_E} we can then always replace $\E$ by $\Dom\D$. 
\end{remark}
Given an odd indefinite unbounded Kasparov module 
$(\A,E_B,\D)$, we can again consider its Wick rotations 
\begin{align*}
\D_+ &:= \Re\D + \Im\D , &  \D_- &:= \Re\D - \Im\D ,
\end{align*}
on the initial domain $\Dom\Re\D\cap\Dom\Im\D$. The following example shows that these 
Wick rotations are not as well-behaved as in the $\Z_2$-graded case.
\begin{example}
Let $(\A,E_B,\D)$ be an odd unbounded Kasparov module, 
and consider the operator $\til\D:=(1+i)\D$. Then $(\A,E_B,\til\D)$ 
is an odd indefinite unbounded Kasparov module, and its 
Wick rotations are $\til\D_+ = 2\D$ and $\til\D_- = 0$. 
The problematic one is obviously $\til\D_-$, as it is not closed on 
$\Dom\D$, and it does not have locally compact resolvent. 
\end{example}
Hence the assumptions of an odd indefinite unbounded 
Kasparov module do not imply that the Wick rotations 
yield odd unbounded Kasparov modules. However, 
by \cref{prop:sum-sa} we do know that $\D_+$ and $\D_-$ 
are essentially self-adjoint, and we will denote their 
self-adjoint closures by $\D_+$ and $\D_-$ as well.

Given an odd unbounded Kasparov module $(\A,E_B,\D)$, it is straightforward to 
construct an (even) unbounded Kasparov module $(\A,\til E_B,\til\D)$ by defining the 
$\Z_2$-graded Hilbert module $\til E := E\oplus E$ (where the first 
summand is considered even and the second summand odd) and the odd operator
$$
\til\D := \mattwo{0}{\D}{\D}{0} .
$$
The following theorem gives a similar `doubling trick' for the indefinite case. 
\begin{thm}
\label{thm:odd_indef-even_pair}
Given trivially graded $C^*$-algebras $A$ and $B$, 
let $E_B$ be a trivially graded, countably generated right Hilbert $B$-module with a 
$*$-homomorphism $\pi\colon A\to\End_B(E)$, 
let $\A\subset A$ be a separable dense $*$-subalgebra, 
and let $\D\colon\Dom\D\to E$ be a closed, regular operator. 
Consider (the closures of) the operators
\begin{align*}
\D_+ &:= \Re\D + \Im\D , & \D_- &:= \Re\D - \Im\D , & \til\D &:= \mattwo{0}{\D_+}{\D_-}{0} , 
& \til\D_+ &:= \mattwo{0}{\D^*}{\D}{0} , & \til\D_- &:= \mattwo{0}{\D}{\D^*}{0} .
\end{align*}
Then the following are equivalent:\\
1) $(\A,E_B,\D)$ is an odd indefinite unbounded Kasparov $A$-$B$-module;\\
2) $(\A,(E\oplus E)_B,\til\D)$ is an indefinite unbounded Kasparov $A$-$B$-module;\\
3) $(\A,(E\oplus E)_B,\til\D_+,\til\D_-)$ is a pair of unbounded Kasparov $A$-$B$-modules.
\end{thm}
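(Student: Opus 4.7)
The plan is to obtain 2) $\Leftrightarrow$ 3) directly from \cref{thm:even_indef_pair_bij}, and then to establish 1) $\Leftrightarrow$ 2) by translating the odd indefinite axioms for $\D$ into the even indefinite axioms for $\til\D$ via the doubling trick (\cref{lem:comm_anti-comm}).

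For 2) $\Leftrightarrow$ 3), I would compute directly. In either case $\D_+$ and $\D_-$ are self-adjoint (for 3), by the definition of a pair; for 2), after passing to the closures granted by \cref{prop:sum-sa}), so $\til\D^* = \mattwo{0}{\D_-}{\D_+}{0}$. A short calculation then yields
$$
\Re\til\D = \mattwo{0}{\Re\D}{\Re\D}{0}, \qquad \Im\til\D = \mattwo{0}{-i\Im\D}{i\Im\D}{0},
$$
and consequently $\Re\til\D + \Im\til\D = \til\D_+$ and $\Re\til\D - \Im\til\D = \til\D_-$, so $\til\D_\pm$ are precisely the Wick rotations of $\til\D$. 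Thus \cref{thm:even_indef_pair_bij} delivers the equivalence.

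For 1) $\Rightarrow$ 2), I would verify each of conditions 1)--5) of \cref{defn:indef_Kasp_mod} for the odd operator $\til\D$ on the $\Z_2$-graded module $E \oplus E$ (with the first summand even). Most conditions are entrywise consequences of the corresponding odd conditions for $\D$: a core $\E$ for $\D$ and $\D^*$ yields $\E \oplus \E$ as a core for $\til\D$ and $\til\D^*$; essential self-adjointness of $\Re\til\D$ and $\Im\til\D$ on $\E \oplus \E$ follows from the block form together with \cref{lem:pair_S_ess-sa}; and bounded commutators and local compactness transfer from the diagonal left action $\pi \oplus \pi$. The substantive step is condition 3): I need the pair $(\Re\til\D, \Im\til\D)$ to be almost anti-commuting. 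Setting $T := \Re\D$ and $S := -\Im\D$, direct inspection identifies $\Re\til\D$ and $\Im\til\D$ with the doubled operators $\til T$ and $\til S$ of \cref{lem:comm_anti-comm}, and since the almost commuting property is insensitive to a sign change on either entry, $(S,T)$ is almost commuting by assumption. \Cref{lem:comm_anti-comm}(1) then provides the desired almost anti-commutativity. The reverse direction 2) $\Rightarrow$ 1) is symmetric, invoking \cref{lem:comm_anti-comm}(2).

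The main obstacle is keeping the bookkeeping for condition 3) straight, both the sign conventions between the off-diagonal entries and the transition between essential self-adjointness (as in the odd indefinite definition) and self-adjointness of the Wick rotations required by the even indefinite definition and the pair definition. Once these are handled, every other axiom passes routinely via the block structure.
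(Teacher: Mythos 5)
Your overall architecture matches the paper's (the paper also gets 2) $\Leftrightarrow$ 3) from \cref{thm:even_indef_pair_bij} after observing that $\til\D$ and $(\til\D_+,\til\D_-)$ are related by (reverse) Wick rotation, and then handles the remaining equivalence by the doubling trick; the only routing difference is that the paper proves 1) $\Leftrightarrow$ 3) directly while you prove 1) $\Leftrightarrow$ 2), which is immaterial). However, your key step for condition 3) contains a genuine error. You set $T:=\Re\D$, $S:=-\Im\D$, identify $(\Re\til\D,\Im\til\D)=(\til T,\til S)$, and then claim that ``$(S,T)$ is almost commuting by assumption'' because the property is ``insensitive to a sign change on either entry.'' Signs are indeed harmless, but you have also \emph{swapped the entries}, and the relation of \cref{defn:almost_(anti-)commute} is not symmetric: the (anti-)commutator must be relatively bounded by the \emph{first} operator of the pair, via its resolvent $(S-i\mu)^{-1}$, and the core condition refers to the second. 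The hypothesis in \cref{defn:odd_indef_Kasp_mod} is that $[\Re\D,\Im\D](\Re\D-i\mu)^{-1}$ is bounded; your pair $(-\Im\D,\Re\D)$ being almost commuting would instead require relative boundedness with respect to $\Im\D$, which is not assumed (and typically fails — compare the paper's discussion of $\{\Re\sD,\Im\sD\}$ for the Dirac operator, which is controlled by spacelike but not timelike derivatives). Moreover, even granting your hypothesis, \cref{lem:comm_anti-comm}(1) would deliver $(\til S,\til T)=(\Im\til\D,\Re\til\D)$ almost anti-commuting, i.e.\ control by the resolvent of $\Im\til\D$, whereas \cref{defn:indef_Kasp_mod} requires the pair in the order $(\Re\til\D,\Im\til\D)$, i.e.\ control by $\Re\til\D$. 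So as written your argument proves a transposed statement (everything controlled by the imaginary parts) rather than the one needed.

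The gap is fixable, and the repair shows why the paper keeps the ordering it does: apply \cref{lem:comm_anti-comm}(1) with $(S,T)=(\Re\D,\Im\D)$, exactly as assumed, to get that $\bigl(\til S,\til T\bigr)$ with
$$
\til S=\mattwo{0}{i\Re\D}{-i\Re\D}{0},\qquad \til T=\mattwo{0}{\Im\D}{\Im\D}{0}
$$
is almost anti-commuting, and then conjugate simultaneously by the unitary $u=\diag(1,i)\in\End_B(E\oplus E)$: one checks $u\til S u^*=\Re\til\D$ and $u\til T u^*=\Im\til\D$, and the conditions of \cref{defn:almost_(anti-)commute} are invariant under such simultaneous unitary conjugation (replace $\E$ by $u\E$). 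This keeps the resolvent attached to $\Re\D$ and its doubled counterpart throughout, which is exactly how the paper phrases its version of the step, with the pair $(\til\D_++\til\D_-,\til\D_+-\til\D_-)$ of \cref{defn:even_pair} whose first member is built from $\Re\D$. The same care is needed in your converse direction: \cref{lem:comm_anti-comm}(2) passes from a single pair to its doubled pair, not back, so descending from $(\Re\til\D,\Im\til\D)$ almost anti-commuting to $(\Re\D,\Im\D)$ almost commuting requires either a short direct computation (the anticommutator of the doubled operators is $\diag(i[\Re\D,\Im\D],-i[\Re\D,\Im\D])$, and the resolvent of $\Re\til\D$ decomposes after diagonalising the plain doubling) or the same conjugation argument; invoking the lemma ``symmetrically'' does not literally apply.
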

\begin{proof}
One easily sees that the reverse Wick rotation of 
$(\til\D_+,\til\D_-)$ equals $\til\D$, and the equivalence of 2) and 3) then follows from \cref{thm:even_indef_pair_bij}. Hence it suffices to prove the equivalence of 1) and 3). 
\begin{description}
\item[1)$\Rightarrow$3):] Let $(\A,E_B,\D)$ be an odd indefinite unbounded 
Kasparov $A$-$B$-module. 
From \cref{remark:odd_indef_Kasp_mod} we have the equality  $\Dom\D = \Dom\D^*$, and from \cref{lem:common_core} we then know that $\Dom\D = \Dom\Re\D\cap\Dom\Im\D$, and we can write $\D = \Re\D + i \Im\D$ and $\D^* = \Re\D-i\Im\D$.
Thus the operators
\begin{align*}
\til\D_+ &= \mattwo{0}{\D^*}{\D}{0} 
 , & \til\D_- &= \mattwo{0}{\D}{\D^*}{0} 
\end{align*}
are self-adjoint on $(\Dom\Re\D\cap\Dom\Im\D)^{\oplus2}$. 
For all $a\in\A$, we know that $[\D,a]$ and $[\D^*,a]$ are bounded, 
and therefore $[\til\D_+,a]$ and $[\til\D_-,a]$ are also bounded. 
Furthermore, the inclusion of the domain $(\Dom\Re\D\cap\Dom\Im\D)^{\oplus2}$ 
in $E\oplus E$ is locally compact, because the inclusion 
$\Dom\Re\D\cap\Dom\Im\D = \Dom\D \hookrightarrow E$ is 
locally compact by assumption. Thus both $(\A,(E\oplus E)_B,\til\D_\pm)$ 
are unbounded Kasparov $A$-$B$-modules. 

The operators $\til\D_+\pm\til\D_-$ are essentially self-adjoint on 
$(\Dom\Re\D\cap\Dom\Im\D)^{\oplus2}$, because $\Re\D$ and 
$\Im\D$ are essentially self-adjoint on $\Dom\Re\D\cap\Dom\Im\D$. 
Since $(\Re\D,\Im\D)$ is an almost commuting pair, it follows from \cref{lem:comm_anti-comm} that $(\til\D_++\til\D_-,\til\D_+-\til\D_-)$ is an almost anti-commuting pair.
Thus $(\A,(E\oplus E)_B,\til\D_+,\til\D_-)$ is indeed a pair of 
unbounded Kasparov $A$-$B$-modules. 

\item[3)$\Rightarrow$1):] Suppose $(\A,(E\oplus E)_B,\til\D_+,\til\D_-)$ 
is a pair of unbounded Kasparov $A$-$B$-modules. 
The property $\Dom\til\D_+=\Dom\til\D_-$ (see \cref{remark:even_pair}) then implies that 
$\Dom\D=\Dom\D^*$. Since $\til\D_+\pm\til\D_-$ are essentially 
self-adjoint, it follows that $\Re\D$ and $\Im\D$ are essentially 
self-adjoint on $\Dom\D$. Since $(\til\D_++\til\D_-,\til\D_+-\til\D_-)$ is an almost anti-commuting pair, it follows from \cref{lem:comm_anti-comm} that $(\Re\D,\Im\D)$ is an almost commuting pair.
For all $a\in\A$, we know that $[\til\D_+,a]$ and $[\til\D_-,a]$ are bounded, 
and therefore $[\D,a]$ and $[\D^*,a]$ are also bounded.  
Finally, since the inclusion $(\Dom\D)^{\oplus2}\hookrightarrow E\oplus E$ 
is locally compact, it follows that the inclusion $\Dom\D\hookrightarrow E$ 
is also locally compact. Thus $(\A,E_B,\D)$ is 
indeed an odd indefinite unbounded Kasparov $A$-$B$-module. \qedhere
\end{description}
\end{proof}
We point out that the indefinite Kasparov module $(\A,(E\oplus E)_B,\til\D)$, 
given (as defined above) by the operator
$$
\til\D = \mattwo{0}{\D_+}{\D_-}{0} ,
$$ 
is a very special type of indefinite Kasparov module. 
For instance, its entries $\D_+$ and $\D_-$ are both 
essentially self-adjoint, and they have a common 
core (namely $\Dom\D$). The special nature of such $\til\D$ 
is reflected by the following property of the Wick rotations.

Given a Hilbert $A$-$B$-bimodule ${}_\pi E_B$, recall the \emph{opposite module} ${}_{\pi^\op}E^\op_B$, which is defined as the Hilbert module $E_B$ with the opposite grading (i.e.\ $(E^\op)^0 = E^1$ and $(E^\op)^1 = E^0$), and with the left action $\pi^\op(a) := \pi(a^0) - \pi(a^1)$ for $a = a^0+a^1 \in A^0\oplus A^1 = A$.
\begin{prop}
\label{prop:odd_opp_class}
Let $(\A,E_B,\D)$ be an odd indefinite unbounded Kasparov 
$A$-$B$-module, and consider the corresponding pair of unbounded 
Kasparov modules $(\A,(E\oplus E)_B,\til\D_+,\til\D_-)$ (as in \cref{thm:odd_indef-even_pair}). 
Then $(\A,(E\oplus E)_B,\til\D_+)$ is unitarily equivalent to 
$(\A,(E\oplus E)^\op_B,-\til\D_-)$, and for their $KK$-classes we therefore have 
$[(\A,(E\oplus E)_B,\til\D_+)] = - [(\A,(E\oplus E)_B,\til\D_-)] \in KK(A,B)$.
\end{prop}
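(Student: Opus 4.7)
The plan is to exhibit an explicit even unitary $U\colon(E\oplus E)_B\to(E\oplus E)^\op_B$ that intertwines $\til\D_+$ with $-\til\D_-$ and the two left actions of $A$, and then to deduce the $KK$-class identity from the standard sign conventions for opposite graded modules in Kasparov theory.

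First I would take
$$
U:=\mattwo{0}{1}{-1}{0}\in\End_B(E\oplus E)
$$
and check that $UU^*=U^*U=1$. Since the grading operators on $(E\oplus E)_B$ and $(E\oplus E)^\op_B$ are $\gamma=\diag(1,-1)$ and $-\gamma$ respectively, a one-line matrix identity gives $(-\gamma)U=U\gamma$, so $U$ is an even map between the two graded modules. The main computation is then
$$
U\til\D_+U^* \;=\; \mattwo{0}{1}{-1}{0}\mattwo{0}{\D^*}{\D}{0}\mattwo{0}{-1}{1}{0} \;=\; \mattwo{0}{-\D}{-\D^*}{0} \;=\; -\til\D_-,
$$
valid on the common domain $(\Dom\D)^{\oplus2}=(\Dom\D^*)^{\oplus2}$, using the equality $\Dom\D=\Dom\D^*$ from \cref{remark:odd_indef_Kasp_mod}. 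Because $A$ is trivially graded, $\pi^\op(a)=\pi(a)$ for every $a\in A$ and both left actions on $E\oplus E$ are simply $\diag(\pi(a),\pi(a))$, so $U$ automatically intertwines them. This establishes the claimed unitary equivalence $(\A,(E\oplus E)_B,\til\D_+)\cong(\A,(E\oplus E)^\op_B,-\til\D_-)$.

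For the $KK$-class identity, I would combine three facts: (i) unitary equivalence preserves $KK$-classes; (ii) passing to the opposite graded module, $M\mapsto M^\op$, negates a class in $KK(A,B)$; and (iii) $(\A,(E\oplus E)_B,\til\D_-)$ and $(\A,(E\oplus E)_B,-\til\D_-)$ are unitarily equivalent via the even grading operator $\gamma=\diag(1,-1)$ on $E\oplus E$, since $\til\D_-$ is odd and so $\gamma\til\D_-\gamma=-\til\D_-$; hence they represent the same $KK$-class. Concatenating gives
$$
[(\A,(E\oplus E)_B,\til\D_+)] \;=\; [(\A,(E\oplus E)^\op_B,-\til\D_-)] \;=\; -[(\A,(E\oplus E)_B,-\til\D_-)] \;=\; -[(\A,(E\oplus E)_B,\til\D_-)],
$$
as claimed.

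No serious obstacle arises; the heart of the argument is essentially guessing the correct unitary. The only point that really demands care is the sign bookkeeping: the minus sign in the statement comes from the opposite-grading step, \emph{not} from the $-\til\D_-$ inside the unitary equivalence, because negation of $\til\D_-$ alone is implemented by conjugation with the even grading operator and therefore preserves the $KK$-class.
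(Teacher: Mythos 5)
Your argument is correct and essentially identical to the paper's: you use the same off-diagonal unitary $\mattwo{0}{1}{-1}{0}$, verify the same conjugation identities for $\til\D_+$ and for the grading operator, and conclude the unitary equivalence with $(\A,(E\oplus E)^\op_B,-\til\D_-)$. The only cosmetic difference is that you split the final $KK$-class step into two sub-facts (passing to the opposite module negates the class, and conjugation by the even grading operator removes the sign on $\til\D_-$), whereas the paper directly invokes the standard fact that $(\A,(E\oplus E)^\op_B,-\til\D_-)$ represents the negative of the class of $(\A,(E\oplus E)_B,\til\D_-)$.
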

\begin{proof}
First, the operators $\til\D_+$ and $-\til\D_-$ are unitarily equivalent:
$$
\mattwo{0}{1}{-1}{0} \mattwo{0}{\D^*}{\D}{0} \mattwo{0}{-1}{1}{0} = \mattwo{0}{-\D}{-\D^*}{0} .
$$
However, we also find that
$$
\mattwo{0}{1}{-1}{0} \mattwo{1}{0}{0}{-1} \mattwo{0}{-1}{1}{0} = \mattwo{-1}{0}{0}{1} ,
$$
so under this unitary equivalence the $\Z_2$-grading becomes the opposite. 
Thus, we have the unitary equivalence 
$(\A,(E\oplus E)_B,\til\D_+) \sim (\A,(E\oplus E)^\op_B,-\til\D_-)$. 
Recalling that the class of $(\A,(E\oplus E)^\op_B,-\til\D_-)$ is the negative of the class of 
$(\A,(E\oplus E)_B,\til\D_-)$, the last statement follows immediately.
\end{proof}
We would like to characterise the types of indefinite 
Kasparov modules that are obtained from odd indefinite 
Kasparov modules, and for this purpose we prove a 
converse to the above proposition.
\begin{prop}
\label{prop:odd_opp_class_converse}
Let $A$ and $B$ be trivially graded $C^*$-algebras. 
Let $(\A,E_B,\D_1,\D_2)$ be a pair of unbounded Kasparov 
$A$-$B$-modules such that $(\A,E_B,\D_1)$ is unitarily equivalent to 
$(\A,E^\op_B,-\D_2)$ via an \emph{anti-self-adjoint} unitary 
$$
\mattwo{0}{-U^*}{U}{0} ,
$$
where $U$ is a unitary isomorphism $E^0\to E^1$ and we identify 
$E^\op \simeq E = E^0\oplus E^1$ as \emph{ungraded} modules. 
Then $(\A,E^0_B,U^* \D_1|_{E^0})$ is an odd indefinite unbounded Kasparov $A$-$B$-module. 
\end{prop}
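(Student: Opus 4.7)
The plan is to reduce the proposition to the implication $3) \Rightarrow 1)$ of \cref{thm:odd_indef-even_pair}, by exhibiting an explicit even unitary that identifies the given pair $(\A, E_B, \D_1, \D_2)$ with the doubling-trick pair associated to the candidate operator $\D := U^* \D_1|_{E^0}$ on $E^0$.

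First I would write $E = E^0 \oplus E^1$ and decompose the odd operators as
$$\D_i = \mattwo{0}{S_i^*}{S_i}{0}$$
for closed regular $S_i \colon \Dom S_i \subset E^0 \to E^1$, so that $\D = U^* S_1$ is a closed regular operator on $\Dom S_1 \subset E^0$. Next I would unpack the hypotheses: since $A$ is trivially graded we have $\pi^\op = \pi$, so the commutation of the intertwiner $V = \mattwo{0}{-U^*}{U}{0}$ with $\pi(a)$ forces $U \pi^0(a) = \pi^1(a) U$; and a direct block-matrix computation of $V \D_1 V^* = -\D_2$ yields the key identity $S_2 = U S_1^* U$, equivalently $S_2^* = U^* S_1 U^*$.

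The main step is then to introduce the even unitary
$$W := \mattwo{1}{0}{0}{U} \colon E^0 \oplus E^0 \to E^0 \oplus E^1 = E,$$
where $E^0 \oplus E^0$ is equipped with the standard grading (first summand even, second odd). Conjugation by $W$ carries the action of $A$ to the diagonal action $\pi^0 \oplus \pi^0$, and a short computation using $S_2 = U S_1^* U$ yields
$$W^* \D_1 W = \mattwo{0}{\D^*}{\D}{0} = \til\D_+ \quad\text{and}\quad W^* \D_2 W = \mattwo{0}{\D}{\D^*}{0} = \til\D_-,$$
where $\til\D_\pm$ are precisely the doubling-trick operators of \cref{thm:odd_indef-even_pair} applied to $\D$. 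Thus $(\A, E_B, \D_1, \D_2)$ is unitarily equivalent, as a pair of Kasparov modules, to $(\A, (E^0 \oplus E^0)_B, \til\D_+, \til\D_-)$; since unitary equivalence preserves the axioms of a pair, the latter is itself a pair of unbounded Kasparov $A$-$B$-modules, and the implication $3) \Rightarrow 1)$ of \cref{thm:odd_indef-even_pair} immediately yields that $(\A, E^0_B, \D)$ is an odd indefinite unbounded Kasparov module.

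The argument is essentially bookkeeping, and I do not expect a serious obstacle. The only step requiring some care is extracting $S_2 = U S_1^* U$ from the anti-self-adjointness of the intertwiner by block multiplication, but this is a direct computation. Once that identity is in hand, the choice of $W$ bringing $(\D_1,\D_2)$ into doubling-trick form for a single operator $\D$ on $E^0$ is essentially forced, and \cref{thm:odd_indef-even_pair} does the remaining work.
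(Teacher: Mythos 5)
Your proposal is correct and takes essentially the same route as the paper's proof: the paper likewise extracts $\D_2$ in the form $\left(\begin{smallmatrix}0 & U^*\D_0U^*\\ U\D_0^*U & 0\end{smallmatrix}\right)$ from the anti-self-adjoint intertwiner, identifies $E^1$ with $E^0$ via $U$ (your unitary $W=\mathrm{diag}(1,U)$ is just this identification made explicit), and then invokes \cref{thm:odd_indef-even_pair}. The computations you outline ($S_2=US_1^*U$, $a_1=Ua_0U^*$, and the conjugated forms $\til\D_\pm$) all check out.
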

\begin{remark}
Suppose that $\D_1=\D_2$, so we just have an unbounded Kasparov module $(\A,E_B,\D_1)$. 
The anti-self-adjoint unitary operator given above can be seen as the 
generator of the Clifford algebra $\CCliff_1$. Since it is odd and 
anti-commutes with $\D_1=\D_2$, this means that $(\A,E_B,\D_1)$ extends to
an unbounded Kasparov module $(\A\otimes \CCliff_1,E_B,\D_1)$
and thus  represents 
a class in the \emph{odd} $KK$-theory $KK^1(A,B) = KK(A\otimes\CCliff_1,B)$. 
If $\D_1\neq\D_2$ however, the anti-self-adjoint unitary does \emph{not} 
anti-commute with $\D_1$ 
(nor $\D_2$), so the pair of Kasparov $A$-$B$-modules 
does \emph{not} extend to a pair of Kasparov $A\otimes\CCliff_1$-$B$-modules. 
\end{remark}
\begin{proof}
Using the isomorphism $E^\op \simeq E = E^0\oplus E^1$ as 
\emph{ungraded} modules, any \emph{even} unitary isomorphism 
$E \to E^\op$ can be written in the form
$$
\mattwo{0}{-V^*}{U}{0} ,
$$
where $U$ and $V$ are unitary isomorphisms $E^0\to E^1$. The 
assumption that this unitary isomorphism is anti-self-adjoint implies that $U=V$. If we write the self-adjoint operator $\D_1$ on $E^0\oplus E^1$ as
$$
\D_1 = \mattwo{0}{\D_0^*}{\D_0}{0} ,
$$
the unitary equivalence of $\D_1$ and $-\D_2$ then yields
$$
\D_2 = - \mattwo{0}{-U^*}{U}{0} \mattwo{0}{\D_0^*}{\D_0}{0} \mattwo{0}{U^*}{-U}{0} 
= \mattwo{0}{U^*\D_0U^*}{U\D_0^*U}{0} .
$$
The algebra $A$ is trivially graded, so its representation on 
$E$ and $E^\op$ is the same. Writing $a = a_0\oplus a_1$, we find that 
$a_1 = Ua_0U^*$. Hence the representation of 
$A$ on $E$ is determined by its representation on $E^0$.  
Using the identification $E^1 = U E^0$, we can rewrite $a$, 
$\D_1$, and $\D_2$ as operators on $E^0\oplus E^0$ as
\begin{align*}
a &= \mattwo{a_0}{0}{0}{a_0} , & \D_1 
&= \mattwo{0}{\D_0^*U}{U^*\D_0}{0} , & \D_2 &= \mattwo{0}{U^*\D_0}{\D_0^*U}{0} .
\end{align*}
By defining $\D := U^*\D_0\colon\Dom\D_0\to E^0$, this can be rewritten as
\begin{align*}
\D_1 &= \mattwo{0}{\D^*}{\D}{0} , & \D_2 &= \mattwo{0}{\D}{\D^*}{0} .
\end{align*}
Hence 
it follows from \cref{thm:odd_indef-even_pair} that $(\A,E^0_B,\D)$ 
is an odd indefinite unbounded Kasparov module. 
\end{proof}
We point out that our constructions are well-defined and 
reversible up to unitary equivalence (where we need to 
allow for unitary equivalence because of the 
freedom in the unitary isomorphism $U\colon E^0\to E^1$). 
Combining the previous two propositions with \cref{thm:odd_indef-even_pair}, 
we thus obtain:
\begin{thm}
Let $A$ and $B$ be trivially graded $C^*$-algebras. 
The constructions of \cref{prop:odd_opp_class,prop:odd_opp_class_converse} implement a bijection between 
unitary equivalence classes of \emph{odd} 
indefinite unbounded Kasparov $A$-$B$-modules $(\A,E_B,\D)$ and
unitary equivalence classes of \emph{pairs} of 
unbounded Kasparov $A$-$B$-modules $(\A,E_B,\D_1,\D_2)$ 
such that $(\A,E_B,\D_1)$ is unitarily equivalent to 
$(\A,E^\op_B,-\D_2)$ via an \emph{anti-self-adjoint} even unitary.
\end{thm}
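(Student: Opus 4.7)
The plan is to verify that the constructions of \cref{prop:odd_opp_class,prop:odd_opp_class_converse} are mutually inverse at the level of unitary equivalence classes. Both constructions are built from direct sums, matrix operators, and compositions with a unitary $U$, and so are visibly natural with respect to unitary equivalences; I would take this well-definedness for granted and focus on verifying inverse-ness.

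For the forward direction, starting with an odd indefinite unbounded Kasparov module $(\A,E_B,\D)$, I would apply \cref{prop:odd_opp_class} (via \cref{thm:odd_indef-even_pair}) to obtain the pair $(\A,(E\oplus E)_B,\til\D_+,\til\D_-)$ with the natural $\Z_2$-grading on $E\oplus E$. In the notation of \cref{prop:odd_opp_class_converse}, this puts $E^0 = E^1 = E$, the lower off-diagonal entry of $\til\D_+$ is $\D_0 = \D$, and the anti-self-adjoint unitary is $\mattwo{0}{-1}{1}{0}$, so $U = \mathrm{id}_E$. The associated odd indefinite Kasparov module is then $(\A,E^0_B,U^*\D_0) = (\A,E_B,\D)$, recovering the input on the nose.

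For the reverse direction, suppose $(\A,E_B,\D_1,\D_2)$ is a pair satisfying the hypothesis of \cref{prop:odd_opp_class_converse} via a unitary $U\colon E^0\to E^1$, and let $(\A,E^0_B,\D)$ with $\D := U^*\D_0$ be the resulting odd indefinite Kasparov module. Applying \cref{prop:odd_opp_class} to this produces the pair on $E^0\oplus E^0$ with operators
$$
\til\D_+' = \mattwo{0}{\D^*}{\D}{0}, \qquad \til\D_-' = \mattwo{0}{\D}{\D^*}{0}.
$$
I would then exhibit the even unitary $W := \mattwo{1}{0}{0}{U}\colon E^0\oplus E^0 \to E^0\oplus E^1 = E$ and verify by direct matrix computation, using $\D = U^*\D_0$ and $\D^* = \D_0^*U$, that $W\til\D_+' W^* = \D_1$ and $W\til\D_-' W^* = \D_2$. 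The relation $a_1 = Ua_0U^*$ derived in the proof of \cref{prop:odd_opp_class_converse} ensures that $W$ simultaneously intertwines the representations of $A$, so the recovered pair is unitarily equivalent to the original.

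The main point requiring care is that the unitary $U\colon E^0\to E^1$ implementing the anti-self-adjoint equivalence is not uniquely determined by the pair $(\A,E_B,\D_1,\D_2)$; a different choice $U'$ would differ from $U$ by a unitary on $E^0$, and the resulting odd indefinite Kasparov modules on $E^0$ differ by that inner automorphism, which is exactly a unitary equivalence in the sense of \cref{defn:unit-equiv}. Once this ambiguity is absorbed into passing to equivalence classes, the matrix computations above give the bijection.
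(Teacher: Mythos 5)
Your explicit computations are correct, and your route is the same as the paper's: the paper proves this theorem only by combining \cref{prop:odd_opp_class,prop:odd_opp_class_converse} with \cref{thm:odd_indef-even_pair} and remarking that the freedom in $U$ is absorbed by unitary equivalence. Your verification that the two constructions are mutually inverse---identifying $U=\mathrm{id}$ in the forward direction, and in the reverse direction conjugating by the even unitary $W=\mattwo{1}{0}{0}{U}$ to recover $\D_1$, $\D_2$ and the representation via $a_1=Ua_0U^*$---is accurate and in fact supplies detail that the published argument leaves implicit.

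The gap is in your final paragraph, and it is exactly the point the paper itself waves at. If $U$ and $U'$ both implement the anti-self-adjoint equivalence for the \emph{same} pair $(\A,E_B,\D_1,\D_2)$, with $\D_1=\mattwo{0}{\D_0^*}{\D_0}{0}$, then the two resulting odd operators are $\D=U^*\D_0$ and $\D'=U'^*\D_0=(U^*U')^*\D$: they differ by a \emph{one-sided} multiplication by a unitary commuting with $\pi(A)|_{E^0}$, not by a conjugation, so this is not ``exactly a unitary equivalence'' in the sense of \cref{defn:unit-equiv}, and in general the equivalence class does change. Concretely, take $A=B=\C$, $E^0=E^1=\C^2$, $\D_0=\diag(1,2)$: both $U=1$ and $U'=\diag(1,-1)$ satisfy $U'^*\D_0U'^*=U^*\D_0U^*$ and $U'\D_0^*U'=U\D_0^*U$, hence determine the same pair, yet $U^*\D_0=\diag(1,2)$ and $U'^*\D_0=\diag(1,-2)$ are not unitarily equivalent (their doubled pairs, however, \emph{are} equivalent, via $\diag(1,1,1,-1)$). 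So with the naive notion of unitary equivalence of pairs, the backward assignment is not well defined on classes and the forward one is not injective; the statement only becomes correct if the equivalence of pairs is required to intertwine the anti-self-adjoint unitaries as well. With that refinement your argument closes: an equivalence $W_0\oplus W_1$ then satisfies $W_1UW_0^*=U'$, whence $\D'=W_0\D W_0^*$, and together with your matrix computations this gives the bijection. You should flag this refinement explicitly rather than appeal to an ``inner automorphism''; as it stands that step would fail.
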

\section{Examples}
\label{sec:egs}
\subsection{Pseudo-Riemannian spin manifolds}
\label{sec:pseudo-Riem-mfd}
In this section we describe the main example for indefinite Kasparov modules, namely pseudo-Riemannian spin manifolds. We briefly recall the construction of the canonical Dirac operator on a pseudo-Riemannian spin manifold, and for more details we refer to \cite{Baum81}. 

Let $(M,g)$ be an $n$-dimensional time- and space-oriented pseudo-Riemannian spin manifold of signature $(t,s)$, where $t$ is the number of time dimensions (for which $g$ is negative-definite) and $s$ is the number of spatial dimensions (for which $g$ is positive-definite). We consider an orthogonal decomposition of the tangent bundle $TM = \bE_t\oplus \bE_s$, which always exists but is far from unique. We will consider elements of $\bE_t$ to be `purely timelike' and elements of $\bE_s$ to be `purely spacelike'. Given our choice of decomposition $TM = \bE_t\oplus \bE_s$, we have a \emph{timelike projection} $T\colon \bE_t\oplus \bE_s \to \bE_t$ and a \emph{spacelike reflection} $r := 1-2T$ which acts as $(-1)\oplus1$ on $\bE_t\oplus \bE_s$. 

Let $\Cliff(TM,g)$ denote the real Clifford algebra with respect to $g$, and denote the Clifford representation $TM \into \Cliff(TM,g)$ by $\gamma$. Our conventions are such that $\gamma(v)\gamma(w) + \gamma(w)\gamma(v) = - 2 g(v,w)$. 
We shall denote by $h$ the map $T^*M\rightarrow TM$ which maps $\alpha\in T^*M$ to its dual in $TM$ with respect to the metric $g$. That is:
\begin{align*}
h(\alpha) &= v   \Longleftrightarrow   \alpha(w) = g(v,w) \quad\text{for all } w\in TM .
\end{align*}
We assume that $M$ is equipped with a spin structure. 
We consider the corresponding spinor bundle $\bS\rightarrow M$ 
and its space of compactly supported, smooth sections 
$\Gamma_c^\infty(\bS)$. We denote by $c$ the pseudo-Riemannian 
Clifford multiplication $\Gamma_c^\infty(T^*M\otimes \bS)\rightarrow \Gamma_c^\infty(\bS)$ 
given by 
\begin{align*}
c(\alpha\otimes\psi) &:= \gamma\big(h(\alpha)\big)\psi .
\end{align*}
Let $\nabla$ be the Levi-Civita connection for the pseudo-Riemannian 
metric $g$, and let $\nabla^\bS$ be its lift to the spinor bundle. 
The Dirac operator on $\Gamma_c^\infty(\bS)$ is defined as the composition
\begin{align*}
\sD &\colon \Gamma_c^\infty(\bS) \xrightarrow{\nabla^\bS} \Gamma_c^\infty(T^*M\otimes \bS) \xrightarrow{c} \Gamma_c^\infty(\bS) . 
\end{align*}
Locally, we can choose a (pseudo-)orthonormal frame $\{e_j\}_{j=1}^n$ 
corresponding to our choice of decomposition $TM = \bE_t\oplus \bE_s$, 
such that $e_j\in \bE_t$ for $j\leq t$ and $e_j\in \bE_s$ for $j>t$. 
In terms of this frame, the metric can be written as
\begin{align*}
g(e_i,e_j) &= \delta_{ij}\kappa(j) , & \kappa(j) &= 
\begin{cases}-1 & j=1,\ldots,t;\\ 1 & j=t+1,\ldots,n. \end{cases}
\end{align*}
Let $\{\theta^i\}_{i=1}^n$ be the basis of $T^*M$ dual to $\{e_j\}_{j=1}^n$, 
so that $\theta^i(e_j) = \delta^i_j$. We then see that $h(\theta^j) = \kappa(j)e_j$. 
In terms of the local frame $\{e_j\}$, we can then write the Dirac operator as
$$
\sD := c \circ \nabla^\bS = \sum_{j=1}^n \kappa(j) \gamma(e_j) \nabla^\bS_{e_j} .
$$
\subsubsection{The Hilbert space of spinors}
Given the decomposition $TM = \bE_t\oplus \bE_s$, there exists a 
positive-definite hermitian structure \cite[\S3.3.1]{Baum81}
$$
(\cdot|\cdot)\colon \Gamma_c^\infty(\bS) \times \Gamma_c^\infty(\bS) \to C_c^\infty(M) ,
$$
which gives rise to the inner product
$
\la\psi_1|\psi_2\ra := \int_M (\psi_1|\psi_2) \dvol_g ,
$
for all $\psi_1,\psi_2\in\Gamma_c^\infty(\bS)$, where $\dvol_g$ 
denotes the canonical volume form of $(M,g)$. The completion of 
$\Gamma_c^\infty(\bS)$ with respect to this inner product is denoted 
$L^2(\bS)$. We can define an operator $\mJ_M$ on $L^2(\bS)$ by setting
$$
\mJ_M := i^{t(t-1)/2} \gamma(e_1) \cdots \gamma(e_t) , 
$$
where $\{e_j\}$ is a local orthonormal frame corresponding to the 
decomposition $TM = \bE_t\oplus \bE_s$. This operator is self-adjoint 
and unitary, and is related to the spacelike reflection $r$ via
$$
\mJ_M\gamma(v)\mJ_M = (-1)^t \gamma(rv) .
$$
The space $L^2(\bS)$ then becomes a Krein space with the indefinite inner product $\la\cdot|\cdot\ra_{\mJ_M} := \la\mJ_M\cdot|\cdot\ra$ and with fundamental symmetry $\mJ_M$. 
\subsubsection{The Dirac operator and its Wick rotations}
Using the spacelike reflection $r$, we can define a `Wick rotated' metric $g_r$ on $M$ by setting
$$
g_r(v,w) := g(rv,w)
$$
for all $v,w\in TM$. One readily checks that $g_r$ is positive-definite, and hence $(M,g_r)$ is a Riemannian manifold. 
Throughout the remainder of this section we make the following assumption:
\begin{assumption}
\label{ass:mfd_complete}
Let $(M,g)$ be an $n$-dimensional time- and space-oriented pseudo-Riemannian spin manifold of signature $(t,s)$, and let $r$ be a spacelike reflection, such that the associated Riemannian metric $g_r$ is complete. 
\end{assumption}
Consider the Dirac operator $\sD := c\circ\nabla^\bS$ on the Hilbert space $L^2(\bS)$ with initial domain $\Gamma_c^\infty(\bS)$. 
From \cite[Satz 3.17]{Baum81} we know that the adjoint of the Dirac operator $\sD = \sum_{j=1}^{n} \kappa(j) \gamma(e_j) \nabla^\bS_{e_j}$ takes the form 
$$
\sD^* = \sum_{j=1}^n \gamma(e_j) \mJ_M \nabla^\bS_{e_j} \mJ_M = \sum_{j=1}^n \left( \gamma(e_j) \nabla^\bS_{e_j} + \gamma(e_j) \mJ_M \big[\nabla^\bS_{e_j},\mJ_M\big] \right) .
$$
For the real and imaginary parts of $\sD$ we thus find
\begin{align*}
\Re\sD &= \sum_{j=t+1}^n \gamma(e_j) \nabla^\bS_{e_j} + \frac12 \sum_{j=1}^n \gamma(e_j) \mJ_M \big[\nabla^\bS_{e_j},\mJ_M\big] , & \Im\sD &= i \sum_{j=1}^t \gamma(e_j) \nabla^\bS_{e_j} + \frac i2 \sum_{j=1}^n \gamma(e_j) \mJ_M \big[\nabla^\bS_{e_j},\mJ_M\big].
\end{align*}
The assumption that $g_r$ is complete implies that $\Re\sD$ and $\Im\sD$ are essentially self-adjoint \cite[Satz 3.19]{Baum81}. 
For the Wick rotations we then obtain the formula
\begin{align*}
\sD_\pm &= \pm i \sum_{j=1}^t \gamma(e_j) \nabla^\bS_{e_j} + \sum_{j=t+1}^n \gamma(e_j) \nabla^\bS_{e_j} + \frac{1\pm i}2 \sum_{j=1}^n \gamma(e_j) \mJ_M \big[\nabla^\bS_{e_j},\mJ_M\big] \\
&= \sum_{j=1}^n \gamma_\pm(e_j) \nabla^\bS_{e_j} + \frac{1\pm i}2 \sum_{j=1}^n \gamma(e_j) \mJ_M \big[\nabla^\bS_{e_j},\mJ_M\big],
\end{align*}
where we have defined the `Wick rotated' Clifford representations $\gamma_\pm$ as
\begin{align}
\label{eq:Wick_Cliff}
\gamma_\pm(v) &:= \pm i \gamma(v_t) + \gamma(v_s)
\end{align}
for any $v = v_t+v_s \in \bE_t\oplus \bE_s=TM$. Since $\gamma_\pm(v)^2 = -\gamma(v_t)^2 + \gamma(v_s)^2 = g(v_t,v_t) - g(v_s,v_s) = - g_r(v,v)$, we see that $\gamma_\pm$ (for either choice of sign) is a Clifford representation associated to the Riemannian metric $g_r$. 

\begin{prop}
\label{prop:sd_pm_pair}
Let $(M,g,r)$ be as in \cref{ass:mfd_complete}.
Then the Wick rotations $\sD_\pm$  yield spectral triples 
$(C_c^\infty(M), L^2(\bS), \sD_\pm)$ such that $\Dom\sD_+ \cap \Dom\sD_-$ 
is a common core for $\D_+$ and $\D_-$, and such that $\sD_+\pm\sD_-$ 
is essentially self-adjoint on this domain. 
\end{prop}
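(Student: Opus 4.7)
The plan is to bypass the almost anti-commuting hypothesis (which, as explained in the introduction, fails in the general pseudo-Riemannian setting) and instead exploit the fact that $\sD_\pm$ is, modulo a locally bounded zeroth-order term, a genuine Dirac-type operator for the \emph{complete} Riemannian manifold $(M,g_r)$. This converts the question of self-adjointness for $\sD_\pm$ into a standard completeness-based essential self-adjointness argument of Wolf/Chernoff type, rather than a consequence of \cref{coro:anti-sum-sa-reg}.

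First I would observe that $\sD_\pm = \Re\sD\pm\Im\sD$ is symmetric on $\Gamma_c^\infty(\bS)$, and inspect its principal symbol. Since $re_j = -e_j$ for $j\leq t$ and $re_j = e_j$ for $j>t$, the local frame $\{e_j\}$ is $g_r$-orthonormal, so the explicit formula for $\sD_\pm$ together with \eqref{eq:Wick_Cliff} shows that its principal symbol at $\xi\in T^*M$ is proportional to $\gamma_\pm(\xi^{\#_r})$, where $\#_r$ denotes the metric dual with respect to $g_r$. Because $\gamma_\pm(\xi^{\#_r})^2 = -|\xi|_{g_r}^2$, the operator $\sD_\pm$ is elliptic with propagation speed one with respect to $g_r$. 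Completeness of $g_r$ together with the symmetry of $\sD_\pm$ then yields essential self-adjointness on $\Gamma_c^\infty(\bS)$, by exactly the Wolf-type argument used in \cite[Satz 3.19]{Baum81} for $\Re\sD$ and $\Im\sD$ separately.

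The remaining spectral triple axioms are routine. For $f\in C_c^\infty(M)$, the commutator $[\sD_\pm,f]$ is pointwise Clifford multiplication by the bounded endomorphism $\gamma_\pm(df^{\#_r})$; and $f(1+\sD_\pm^2)^{-1/2}$ is compact by ellipticity of $\sD_\pm$ with respect to $g_r$, compact support of $f$, and Rellich's lemma applied on a neighbourhood of $\operatorname{supp} f$. Together with the previous step this gives the spectral triples $(C_c^\infty(M),L^2(\bS),\sD_\pm)$.

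For the common-core and essential self-adjointness claims, note that $\Gamma_c^\infty(\bS)\subset\Dom\sD_+\cap\Dom\sD_-$ is already a core for each $\sD_\pm$ by the first step, so the intersection $\Dom\sD_+\cap\Dom\sD_-$ is automatically a common core. On $\Gamma_c^\infty(\bS)$ we have the pointwise identities $\sD_++\sD_- = 2\Re\sD$ and $\sD_+-\sD_- = 2\Im\sD$, both essentially self-adjoint by \cite[Satz 3.19]{Baum81}; hence the symmetric operators $\sD_++\sD_-$ and $\sD_+-\sD_-$ on the larger domain $\Dom\sD_+\cap\Dom\sD_-$ are symmetric extensions of essentially self-adjoint operators, and are therefore themselves essentially self-adjoint with the same closure. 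The main obstacle is step one: since \cref{coro:anti-sum-sa-reg} is unavailable, the essential self-adjointness of $\sD_\pm$ has to be extracted directly from the geometric observation that the Wick-rotated Clifford symbols encode the complete Riemannian metric $g_r$, which is exactly the content of \cref{ass:mfd_complete}.
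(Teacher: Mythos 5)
Your proposal is correct and follows essentially the same route as the paper's proof: essential self-adjointness of $\sD_\pm$ on $\Gamma_c^\infty(\bS)$ from completeness of $g_r$ via the Chernoff/Wolf-type argument (the paper cites \cite[Proposition 10.2.11]{Higson-Roe00}), locally compact resolvent from ellipticity with respect to $g_r$, bounded commutators with $C_c^\infty(M)$, and then the observation that $\sD_+\pm\sD_-$ restricted to the larger domain is a symmetric extension of the essentially self-adjoint operators $2\Re\sD$, $2\Im\sD$ from \cite[Satz 3.19]{Baum81}. The only cosmetic difference is that you make the principal-symbol computation and the core argument slightly more explicit, which does not change the substance.
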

\begin{proof}
The essential self-adjointness of $\sD_\pm$ on $\Gamma_c^\infty(\bS)$ 
follows from the completeness of $g_r$ (see e.g.\ \cite[Proposition 10.2.11]{Higson-Roe00}). 
Commutators of $\sD_\pm$ with functions $f\in C_c^\infty(M)$ are 
bounded because $\sD_\pm$ is a first-order differential operator, 
whose coefficients are smooth and hence bounded on any compact set.
For a vector $v\in TM$, the principal symbol of $\sD_\pm$ is given by 
$i\gamma_\pm(v)$. Since the square of the principal symbol equals 
the positive-definite metric $g_r(v,v)$, this implies that $\sD_\pm$ is elliptic, 
and hence it has locally compact resolvent 
(see e.g.\ \cite[Proposition 10.5.2]{Higson-Roe00}). 
Thus we indeed have spectral triples $(C_c^\infty(M), L^2(\bS), \sD_\pm)$. 

For the domains of the Wick rotations we have 
$\Dom\sD_+ \cap \Dom\sD_- \supset \Dom\sD\cap\Dom\sD^*$. 
Since this domain contains $\Gamma_c^\infty(\bS)$, it is a core for both $\sD_+$ and $\sD_-$. 
The operators $\Re\sD$ and $\Im\sD$ are essentially self-adjoint on $\Gamma_c^\infty(\bS)$ by 
\cite[Satz 3.19]{Baum81}, 
and since they can be extended to symmetric operators on 
$\Dom\sD\cap\Dom\sD^*$, it follows that these symmetric 
extensions are also essentially self-adjoint. Thus $\Re\sD = \frac12(\sD_++\sD_-)$ 
and $\Im\sD = \frac12(\sD_+-\sD_-)$ are essentially self-adjoint on $\Dom\sD_+\cap\Dom\sD_-$. 
\end{proof}

\begin{remark}
The above proposition shows that (under only mild assumptions) 
a pseudo-Riemannian spin manifold gives rise to two spectral triples 
satisfying the first and second conditions in \cref{defn:even_pair}. From the reverse Wick rotation of \cref{prop:pair_to_indef}, we then \emph{almost} obtain an indefinite spectral triple $(C_c^\infty(M), L^2(\bS), \sD)$, except that $\Re\sD$ and $\Im\sD$ do \emph{not} almost anti-commute. 
Indeed, although the anti-commutator $\{\Re\sD,\Im\sD\}$ is a first-order differential operator, it contains in general both spacelike derivatives and timelike derivatives, and thus it is not relatively bounded by $\Re\sD$. In order to ensure that $\Re\sD$ and $\Im\sD$ almost anti-commute, we need the timelike part of $\{\Re\sD,\Im\sD\}$ to vanish identically. In the next 
subsection, we will provide sufficient conditions on a Lorentzian manifold 
to ensure that $\Re\sD$ and $\Im\sD$ almost anti-commute.  

As mentioned in the introduction, we emphasise however that the main reason for imposing this almost anti-commuting condition is to prove self-adjointness of the Wick rotations $\D_\pm$. For the Dirac operator $\sD$, we can simply prove the self-adjointness of $\sD_\pm$ directly (as we did in \cref{prop:sd_pm_pair}), and this condition is therefore not necessary for describing pseudo-Riemannian manifolds.
\end{remark}
\subsubsection{Lorentzian manifolds with parallel time}
\begin{defn}
\label{defn:bdd_geom}
Let $(M,g,r)$ be as in \cref{ass:mfd_complete}. We say that $(M,g,r)$ 
has bounded geometry if $(M,g_r)$ has strictly positive injectivity radius, 
and all the covariant derivatives of the (pseudo-Riemannian) 
curvature tensor of $(M,g)$ are bounded (with respect to $g_r$) on $M$.
A Dirac bundle on $M$ is said to have bounded geometry if in 
addition all the  covariant derivatives  
of $\Omega^\bS$, the curvature tensor of the connection 
$\nabla^\bS$, are bounded (w.r.t $g_r$) on $M$. 
For brevity, we simply say that $(M,g,r,\bS)$ has bounded geometry.
\end{defn}
We will now restrict to Lorentzian signature, and impose additional assumptions on the geometry:
\begin{assumption}
\label{assum:Lor_parallel}
Let $(M,g)$ be an even-dimensional time- and space-oriented 
Lorentzian spin manifold of signature $(1,n-1)$, with a given spinor bundle $\bS\to M$. 
Let $r$ be a spacelike reflection, such that the associated 
Riemannian metric $g_r$ is complete. Assume furthermore that 
$(M,g,r,\bS)$ has bounded geometry. Lastly, we assume that the 
spacelike reflection $r$ is \emph{parallel} (i.e.\ the unit timelike vector field 
$e_0\in\Gamma(\bE_t)$, corresponding to the decomposition 
$TM = \bE_t\oplus\bE_s$, is parallel: $\nabla e_0=0$). 
\end{assumption}
We choose a local orthonormal frame $\{e_k\}_{k=0}^{n-1}$ 
corresponding to the decomposition $TM = \bE_t\oplus\bE_s$ 
(i.e.\ $e_0$ is timelike and $e_k$ is spacelike for $k>0$). 
The assumption that $e_0$ is parallel then implies that 
$[\nabla^\bS,\gamma(e_0)]=0$. The expressions for the 
real and imaginary parts of $\sD$ and its Wick rotations then simplify to:
\begin{align*}
\Re\sD &= \sum_{j=1}^{n-1} \gamma(e_j) \nabla^\bS_{e_j} , & 
\Im\sD &=  i \gamma(e_0) \nabla^\bS_{e_0} , & 
\sD_\pm &= \sum_{k=0}^{n-1} \gamma_\pm(e_k) \nabla^\bS_{e_k} ,
\end{align*}
where we recall from \cref{eq:Wick_Cliff} the Wick rotated Clifford representations $\gamma_\pm(v) :=  \pm i \gamma(v_t) + \gamma(v_s)$ (for $v = v_t+v_s \in E_t\oplus E_s=TM$). 
\begin{lem}
\label{lem:Re-Im_sD}
Let $(M,g,r,\bS)$ be as in Assumption \ref{assum:Lor_parallel}.
The operators $\Re\sD$ and $\Im\sD$ yield an almost anti-commuting pair $(\Re\sD,\Im\sD)$. 
\end{lem}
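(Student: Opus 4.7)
The plan is to verify the three conditions of Definition 2.6 for $(S,T) := (\Re\sD, \Im\sD)$ with common core $\E := \Gamma_c^\infty(\bS)$. The heart of the argument is an explicit calculation showing that, under the parallel-time hypothesis, the anti-commutator $\{\Re\sD,\Im\sD\}$ collapses to a bounded zeroth-order (curvature) operator; once this is in hand, the remaining conditions of the almost-anti-commuting definition follow with only modest additional effort.

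For the computation, I would work in a local orthonormal frame $\{e_k\}_{k=0}^{n-1}$ adapted to $TM = \bE_t \oplus \bE_s$. The hypothesis $\nabla e_0 = 0$ has two crucial consequences: first, $\nabla^\bS_X$ commutes with $\gamma(e_0)$ for every vector field $X$; second, metric compatibility forces the connection coefficients $a_{jk} := g(\nabla_{e_0} e_j, e_k)$ to be antisymmetric in $j,k$ (with $a_{j0}=0$, since $\nabla_{e_0}e_j$ is necessarily spacelike). Expanding $\Im\sD\cdot\Re\sD$ via the Leibniz rule, anti-commuting $\gamma(e_j)$ past $\gamma(e_0)$, and substituting the curvature identity $[\nabla^\bS_{e_0},\nabla^\bS_{e_j}] = \Omega^\bS(e_0,e_j) + \nabla^\bS_{\nabla_{e_0}e_j}$ (valid because $[e_0,e_j] = \nabla_{e_0}e_j$), the derivative terms reorganise into the curvature contribution plus a first-order remainder proportional to $\sum_{j,k}(a_{jk}+a_{kj})\gamma(e_j)\nabla^\bS_{e_k}$, which vanishes by antisymmetry. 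What survives is
\begin{align*}
\{\Re\sD,\Im\sD\} &= i\gamma(e_0)\sum_{j=1}^{n-1}\gamma(e_j)\,\Omega^\bS(e_0,e_j),
\end{align*}
a bundle endomorphism uniformly bounded by the bounded-geometry assumption on $\Omega^\bS$.

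The remaining conditions I would dispatch as follows. Completeness of $g_r$ together with \cite[Satz 3.19]{Baum81} gives essential self-adjointness of both $\Re\sD$ and $\Im\sD$ on $\Gamma_c^\infty(\bS)$, so $\E$ is a core for $\Im\sD$ (condition 1). For condition 2), the parallel-time structure makes $\Re\sD$ act transversely to the $e_0$-direction while $\Im\sD$ differentiates only along $e_0$; combined with the just-established boundedness of the anti-commutator, a commutator estimate on the resolvent shows that $(\Re\sD - i\mu)^{-1}$ preserves $\Dom\Im\sD$ and that $\Im\sD(\Re\sD - i\mu)^{-1}\xi \in \Dom\Re\sD$ for all $\xi\in\E$. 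Condition 3') is then immediate: $\{\Re\sD,\Im\sD\}(\Re\sD - i\mu)^{-1}$ is the composition of a bounded endomorphism with a bounded resolvent.

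The main obstacle is the bookkeeping in the Leibniz/curvature computation. \emph{A priori} the anti-commutator is a first-order operator mixing spatial and timelike derivatives, and it is only the conjunction of $\nabla e_0 = 0$ (giving antisymmetry of $a_{jk}$) with the sign flip obtained by the index swap $j\leftrightarrow k$ that forces exact cancellation of the first-order remainder. Without the parallel-time hypothesis this cancellation fails and one is left with genuine transverse derivatives, which is precisely the obstruction flagged in the remark preceding this subsection.
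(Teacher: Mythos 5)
Your proof is correct, and it follows the same basic strategy as the paper (expand $\{\Re\sD,\Im\sD\}$ in a parallel frame, use $[\nabla^\bS_X,\gamma(e_0)]=0$ and the curvature identity $[\nabla^\bS_{e_0},\nabla^\bS_{e_j}]=\Omega^\bS(e_0,e_j)+\nabla^\bS_{[e_0,e_j]}$), but you carry the computation one step further and reach a genuinely sharper conclusion. The paper stops at the observation that the surviving first-order terms $i\sum_j\gamma(e_0)\gamma(e_j)\nabla^\bS_{[e_0,e_j]}$ and $i\sum_j\gamma(e_0)[\nabla^\bS_{e_0},\gamma(e_j)]\nabla^\bS_{e_j}$ involve only spacelike derivatives (since $g(\nabla_{e_0}e_j,e_0)=0$), and then asserts that such operators are relatively bounded by $\Re\sD$ --- a step that implicitly invokes an elliptic estimate for the leafwise operator $\Re\sD$ under bounded geometry. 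You instead write $\nabla_{e_0}e_j=\sum_k a_{jk}e_k$ with $a_{jk}=g(\nabla_{e_0}e_j,e_k)=-a_{kj}$ and observe that the two first-order contributions combine to $i\sum_{j,k}(a_{jk}+a_{kj})\gamma(e_0)\gamma(e_j)\nabla^\bS_{e_k}=0$, so that $\{\Re\sD,\Im\sD\}$ is the \emph{bounded} endomorphism $i\gamma(e_0)\sum_j\gamma(e_j)\Omega^\bS(e_0,e_j)$; I have checked this cancellation and it is right. This buys you two things: condition 3') of \cref{defn:almost_(anti-)commute} becomes the trivial composition of a bounded endomorphism with a bounded resolvent, and you only need boundedness of $\Omega^\bS$ itself rather than the relative-boundedness claim. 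Your treatment of conditions 1) and 2) (essential self-adjointness on $\Gamma_c^\infty(\bS)$ via completeness of $g_r$, plus the asserted mapping properties of $(\Re\sD-i\mu)^{-1}$) is at the same level of brevity as the paper's ``we observe that $\E$ satisfies conditions 1) and 2)''; neither argument spells out why $(\Re\sD-i\mu)^{-1}$ maps $\E$ into $\Dom\Im\sD$ with $\Im\sD(\Re\sD-i\mu)^{-1}\xi\in\Dom\Re\sD$, so this is not a gap relative to the paper, though it is the one place where both proofs would benefit from more detail.
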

\begin{proof}
We observe that the space $\E := \Gamma_c^\infty(\bS)$ of smooth compactly supported sections satisfies conditions 1) and 2) in \cref{defn:almost_(anti-)commute}.
Since $\gamma(e_0)$ commutes with $\nabla^\bS$ and anti-commutes with $\gamma(e_j)$ (for $j\neq0$), we calculate (on $\E$)
\begin{align}
\{\Re\sD,\Im\sD\} &= i \sum_{j=1}^{n-1} \left( \gamma(e_j) \nabla^\bS_{e_j}  \gamma(e_0) \nabla^\bS_{e_0} + \gamma(e_0) \nabla^\bS_{e_0} \gamma(e_j) \nabla^\bS_{e_j} \right) \notag\\
&= i \sum_{j=1}^{n-1} \Big( \gamma(e_j) \gamma(e_0) \nabla^\bS_{e_j} \nabla^\bS_{e_0} + \gamma(e_0) \gamma(e_j) \nabla^\bS_{e_0} \nabla^\bS_{e_j} + \gamma(e_0) \big[\nabla^\bS_{e_0},\gamma(e_j)\big] \nabla^\bS_{e_j} \Big) \notag\\
&= i \sum_{j=1}^{n-1} \left( \gamma(e_0) \gamma(e_j) \big[\nabla^\bS_{e_0},\nabla^\bS_{e_j}\big] + \gamma(e_0) \big[\nabla^\bS_{e_0},\gamma(e_j)\big] \nabla^\bS_{e_j} \right) \notag\\
&= i \sum_{j=1}^{n-1} \Big( \gamma(e_0) \gamma(e_j) \nabla^\bS_{[e_0,e_j]} + \gamma(e_0) \gamma(e_j) \Omega^\bS(e_0,e_j) + \gamma(e_0) \big[\nabla^\bS_{e_0},\gamma(e_j)\big] \nabla^\bS_{e_j} \Big) . \label{eq:comm_Re_Im_sD}
\end{align}
The curvature $\Omega^\bS(e_0,e_j)$ and the commutator $[\nabla^\bS_{e_0},\gamma(e_j)]$ are bounded by the assumption of bounded geometry.
Hence, on the last line of \cref{eq:comm_Re_Im_sD}, the second term is bounded and the third term is relatively bounded by $\Re\sD$. Since $e_0$ is parallel, we have $[e_0,e_j] = \nabla_{e_0}e_j - \nabla_{e_j}e_0 = \nabla_{e_0}e_j$. Since $e_0$ and $e_j$ are orthogonal (for $j>0$), we find that
$$
g(\nabla_{e_0}e_j,e_0) = - g(e_j,\nabla_{e_0}e_0) + e_0\big(g(e_j,e_0)\big) = 0 ,
$$
and hence $[e_0,e_j] = \nabla_{e_0}e_j \in \bE_s$. This means that the first term in \cref{eq:comm_Re_Im_sD} also only has spacelike derivatives, and is therefore relatively bounded by $\Re\sD$ as well. 
\end{proof}
Combining this with \cref{prop:sd_pm_pair}, and applying the reverse Wick rotation of \cref{prop:pair_to_indef}, we obtain:
\begin{coro}
Let $(M,g,r,\bS)$ satisfy \cref{assum:Lor_parallel}.
The Wick rotations $\sD_\pm$ yield an even pair of spectral triples $(C_c^\infty(M),L^2(\bS),\sD_+,\sD_-)$, and hence the triple $(C_c^\infty(M),L^2(\bS),\sD)$ is an even indefinite spectral triple.
\end{coro}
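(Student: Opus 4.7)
The plan is to combine the two preceding results, \cref{prop:sd_pm_pair} and \cref{lem:Re-Im_sD}, to verify each clause of \cref{defn:even_pair} for the pair $(\sD_+,\sD_-)$, and then invoke the reverse Wick rotation machinery of \cref{prop:pair_to_indef} to conclude the second assertion.

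First, I would go through \cref{defn:even_pair} clause by clause. \cref{prop:sd_pm_pair} already gives that $(C_c^\infty(M),L^2(\bS),\sD_\pm)$ are both spectral triples, that $\E := \Dom\sD_+\cap\Dom\sD_-$ (which contains $\Gamma_c^\infty(\bS)$) is a common core for $\sD_+$ and $\sD_-$, and that $\sD_++\sD_- = 2\Re\sD$ and $\sD_+-\sD_- = 2\Im\sD$ are essentially self-adjoint on this domain. This covers properties 1) and 2) of \cref{defn:even_pair}. The last remaining requirement is property 3): that the pair $(\sD_++\sD_-,\sD_+-\sD_-)$ is almost anti-commuting. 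But up to harmless real rescalings by $2$, this is precisely \cref{lem:Re-Im_sD}, since \cref{defn:almost_(anti-)commute} is manifestly invariant under multiplication of either operator by a nonzero real scalar (the cores, domains, resolvent identities, and relative boundedness of the anti-commutator by $S$ all rescale accordingly).

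Having thus verified that $(C_c^\infty(M),L^2(\bS),\sD_+,\sD_-)$ is an even pair of spectral triples, the second half of the corollary is immediate from \cref{prop:pair_to_indef}: its reverse Wick rotation is an indefinite spectral triple. The only tiny check here is that this reverse Wick rotation actually reproduces $\sD$. By definition the reverse Wick rotation is the closure of $\tfrac12(\sD_++\sD_-)+\tfrac i2(\sD_+-\sD_-)=\Re\sD+i\Im\sD$ on $\Dom\sD_+\cap\Dom\sD_-$; since $\Gamma_c^\infty(\bS)$ lies in $\Dom\sD\cap\Dom\sD^*$, where it is a core for $\sD$, and on $\Gamma_c^\infty(\bS)$ the operator $\Re\sD+i\Im\sD$ agrees with $\sD$, the two closures coincide.

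There is essentially no new obstacle to overcome here — the genuine difficulty, namely the control of the anti-commutator $\{\Re\sD,\Im\sD\}$, was already resolved in \cref{lem:Re-Im_sD} using the parallel-time assumption, which makes the otherwise problematic timelike derivative in $[e_0,e_j]$ spacelike and thus renders $\{\Re\sD,\Im\sD\}$ relatively bounded by $\Re\sD$. The present corollary is therefore essentially a bookkeeping step assembling \cref{prop:sd_pm_pair}, \cref{lem:Re-Im_sD}, and \cref{prop:pair_to_indef}.
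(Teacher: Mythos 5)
Your proof is exactly the paper's argument: combine \cref{prop:sd_pm_pair} (both Wick rotations are spectral triples, common core, essential self-adjointness of $\sD_+\pm\sD_-$) with \cref{lem:Re-Im_sD} (the almost anti-commuting condition, with the factor-of-$2$ rescaling indeed harmless) to verify \cref{defn:even_pair}, and then apply the reverse Wick rotation of \cref{prop:pair_to_indef}. One caution on your ``tiny check'': the argument you give only shows that the reverse Wick rotation \emph{extends} the closure of $\sD$ from $\Gamma_c^\infty(\bS)$ (equality would additionally require $\Gamma_c^\infty(\bS)$ to be dense in $\Dom\sD_+\cap\Dom\sD_-$ for the combined graph norm $\|\cdot\|_{\sD_+,\sD_-}$), but this does not affect the corollary since the paper implicitly takes $\sD$ there to be the operator produced by the reverse Wick rotation.
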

Finally, we relate the Wick rotations $\sD_\pm$ of the Lorentzian Dirac operator on $(M,g)$ to the canonical Dirac operator on the Riemannian manifold $(M,g_r)$. Since $M$ is even-dimensional, recall that the spinor bundle $\bS$ is $\Z_2$-graded with the grading operator given by
\begin{align}
\label{eq:spin_grading}
\Gamma_M := i^{-t+\frac{n(n+1)}{2}} \gamma(e_0) \cdots \gamma(e_{n-1}) ,
\end{align}
where for the Lorentzian signature we of course have $t=1$. 
\begin{prop}
\label{prop:Wick_commuting_diagram}
The Wick rotations $\sD_\pm$ of the Lorentzian Dirac operator $\sD$ on $(M,g,r,\bS)$ are the two canonical Dirac operators on the Wick rotated Riemannian spin manifold $(M,g_r,\bS)$ corresponding to the two possible choices of orientation $\Gamma_M^\pm$ on $\bS$. In other words, the following diagram commutes.
\begin{align*}
\begin{array}{ccc}
(M,g,r,\bS,\Gamma_M) & \xrightarrow{\textnormal{Wick rotate}} & (M,g_r,\bS,\Gamma_M^\pm) \\
\Big\downarrow && \Big\downarrow \\
\sD & \xrightarrow{\textnormal{Wick rotate}} & \sD_\pm \\
\end{array}
\end{align*}
\end{prop}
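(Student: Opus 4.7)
The plan is to show that under \cref{assum:Lor_parallel} the Wick‐rotated operators $\sD_\pm$ simplify to the standard formula for a Riemannian Dirac operator associated to the metric $g_r$, the connection $\nabla^\bS$, and the Clifford action $\gamma_\pm$, and to verify that the data $(g_r,\nabla^\bS,\gamma_\pm)$ really do define the canonical spin Dirac operator on $(M,g_r,\bS)$ for the two opposite orientations.

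First I would exploit the parallel time assumption to kill the `extra' terms in the formulas for $\sD_\pm$ obtained above. With $t=1$ we have $\mJ_M=\gamma(e_0)$ up to a scalar, and since $e_0$ is parallel (so $\gamma(e_0)$ is parallel under $\nabla^\bS$), the commutators $[\nabla^\bS_{e_k},\mJ_M]$ all vanish. Hence on $\Gamma_c^\infty(\bS)$ the general expression for $\sD_\pm$ collapses to
\begin{align*}
\sD_\pm \;=\; \sum_{k=0}^{n-1} \gamma_\pm(e_k)\,\nabla^\bS_{e_k},
\end{align*}
which is formally the expression for a Dirac operator on $(M,g_r)$ built from the Clifford representation $\gamma_\pm$ and the connection $\nabla^\bS$.

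Next I would check that this is genuinely the canonical Riemannian spin Dirac operator. Two ingredients are needed. (a) From the relation $\gamma_\pm(v)^2=-g_r(v,v)$ established already, $\gamma_\pm$ is a Clifford representation of $(TM,g_r)$ on $\bS$, and the frame $\{e_k\}$ is $g_r$-orthonormal, so $\bS$ with the action $\gamma_\pm$ is a (Riemannian) Dirac bundle. (b) The Levi-Civita connections of $g$ and of $g_r$ agree. Indeed, since the decomposition $TM=\bE_t\oplus\bE_s$ is given by the parallel unit vector $e_0$, the reflection $r=1-2T$ is parallel with respect to the Levi-Civita connection $\nabla^g$, and hence $\nabla^g g_r=\nabla^g\bigl(g(r\cdot,\cdot)\bigr)=0$. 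Together with the torsion-freeness of $\nabla^g$, uniqueness of the Levi-Civita connection gives $\nabla^g=\nabla^{g_r}$. Consequently the spinor connections induced from the spin lifts of $\nabla^g$ and $\nabla^{g_r}$ coincide, so $\nabla^\bS$ really is the Riemannian spin connection of $(M,g_r,\bS)$, and the above formula is precisely $c_\pm\circ\nabla^\bS$, the canonical Dirac operator on the Riemannian Dirac bundle $(\bS,\gamma_\pm,\nabla^\bS)$.

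Finally, to identify the two gradings, I would compute the Riemannian chirality element associated to $\gamma_\pm$ and the frame $\{e_0,\ldots,e_{n-1}\}$, namely $\Gamma_M^\pm := i^{n(n+1)/2}\gamma_\pm(e_0)\cdots\gamma_\pm(e_{n-1})$. Using $\gamma_\pm(e_0)=\pm i\gamma(e_0)$ and $\gamma_\pm(e_j)=\gamma(e_j)$ for $j\geq1$, this reduces to $\pm i^{1+n(n+1)/2}\gamma(e_0)\cdots\gamma(e_{n-1})$, which compared with \eqref{eq:spin_grading} at $t=1$ shows $\Gamma_M^\pm=\mp\Gamma_M$. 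Thus the $+$ and $-$ Wick rotations realise exactly the two canonical gradings on $\bS$ as a $\Z_2$-graded spinor bundle over $(M,g_r)$, establishing commutativity of the diagram.

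The only really delicate step is (b) above, where one must be careful to deduce $\nabla^g r=0$ from $\nabla^g e_0=0$ and then argue that $\nabla^g$ is the Levi-Civita connection of $g_r$; everything else is a clean local-frame computation, and the sign bookkeeping in the grading comparison is the only other place where one has to be vigilant.
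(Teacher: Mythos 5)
Your proposal is correct and follows essentially the same route as the paper: simplify $\sD_\pm$ using the parallel-time assumption (which the paper does just before \cref{lem:Re-Im_sD}), identify $\gamma_\pm$ as Clifford actions for $g_r$ with gradings $\Gamma_M^\pm=\mp\Gamma_M$, and use that the parallel reflection $r$ forces the Levi-Civita connections of $g$ and $g_r$ to coincide. The only difference is that you spell out the step $\nabla^g r=0\Rightarrow\nabla^g g_r=0\Rightarrow\nabla^g=\nabla^{g_r}$, which the paper simply asserts.
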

\begin{proof}
In \cref{eq:Wick_Cliff} we have given two Clifford representations $\gamma_\pm$ corresponding to the Riemannian metric $g_r$. The grading operators corresponding to these Clifford representations  are given by
$$
\Gamma_M^\pm := i^{n(n+1)/2} \gamma_\pm(e_0) \cdots \gamma_\pm(e_{n-1}) 
= \pm i^{1+n(n+1)/2} \gamma(e_0) \cdots \gamma(e_{n-1}) 
= \mp \Gamma_M ,
$$
where $\Gamma_M$ is given by \cref{eq:spin_grading}. Hence the choice of sign for the Wick rotation of $\gamma$ corresponds to the choice of orientation for the spinor bundle $\bS$ (in the terminology of \cite[\S2.7]{Ply86}, the choice $(\bS,\Gamma_M^-)$ is the reverse spin structure of $(\bS,\Gamma_M^+)$). Next, the assumption that the spacelike reflection $r$ is parallel implies that the Levi-Civita connection $\nabla$ of $g$ is also the Levi-Civita connection for the Riemannian metric $g_r$. Hence the canonical Dirac operators corresponding to each of the orientations are given by $\sD_\pm = \sum_{j=0}^{n-1} \gamma_\pm(e_j) \nabla^\bS_{e_j}$, which are precisely the Wick rotations of the Lorentzian Dirac operator $\sD$. 
\end{proof}
The above proposition motivates our use of the term `Wick rotations' for $\sD_\pm$, as they are precisely the Dirac operators corresponding to the `Wick rotated' metric $g_r$. 
\subsection{The harmonic oscillator}
\label{sec:harm-osc}
The $d$-dimensional harmonic oscillator has been discussed in \cite[\S2.1]{GaW13} (see also \cite{Wul10}), 
where the harmonic oscillator is `deformed' to obtain a description of the spectral geometry of the (noncommutative) Moyal plane with harmonic propagation. 
Here, we only consider the classical (commutative) case. 

On
$L^2(\R^d)$ we consider the bosonic annihilation and creation operators with canonical commutation relations:
\begin{align*}
a_\mu &:= \omega x_\mu + \partial_\mu , & a_\mu^* &= \omega x_\mu - \partial_\mu , & [a_\mu,a_\nu]=[a_\mu^*,a_\nu^*] &= 0 , & [a_\mu,a_\nu^*] &= 2 \omega \delta_{\mu\nu} ,
\end{align*}
for $\mu,\nu=1,\ldots,d$. Here we have also introduced a frequency parameter $\omega>0$. On the exterior algebra $\Lambda(\C^d)$, we introduce the fermionic partners $b_\mu,b_\mu^*$ satisfying the anti-commutation relations
\begin{align*}
\{b_\mu,b_\nu\} = \{b_\mu^*,b_\nu^*\} &= 0 , & \{b_\mu,b_\nu^*\} &= \delta_{\mu\nu} .
\end{align*}

Denote by $|0\ra_f$ the fermionic vacuum satisfying $b_\mu|0\ra_f = 0$ for all $\mu$. By repeated application of the creation operators $b_\mu^*$ one constructs out of this vacuum the $2^d$-dimensional Hilbert space $\Lambda(\C^d) \simeq \C^{2^d}$, yielding the standard orthonormal basis elements $(b_1^*)^{s_1}\cdots(b_d^*)^{s_d}|0\ra_f$ (where $s_\mu\in\{0,1\}$). The fermionic number operator $N_f := \sum_{\mu=1}^d b_\mu^* b_\mu$ naturally defines an $\N$-grading $\Lambda(\C^d) = \bigoplus_{p=0}^d \Lambda^p(\C^d)$ such that $b_\mu\colon\Lambda^p(\C^d)\to\Lambda^{p-1}(\C^d)$ and $b_\mu^*\colon\Lambda^p(\C^d)\to\Lambda^{p+1}(\C^d)$. The induced $\Z_2$-grading $\Gamma$ on $\Lambda(\C^d)$ then satisfies
\begin{align*}
\Gamma &= (-1)^{N_f} , & \Gamma^2 &= 1, & \Gamma^* &= \Gamma , & \Gamma b_\mu &= - b_\mu \Gamma , & \Gamma b_\mu^* &= - b_\mu^* \Gamma .
\end{align*}
Thus we obtain a $\Z_2$-grading on the Hilbert space $L^2(\R^d)\otimes\Lambda(\C^d)$ given by $1\otimes\Gamma$, which we will also simply denote by $\Gamma$. 
On this $\Z_2$-graded Hilbert space $L^2(\R^d) \otimes \Lambda(\C^d)$ we then consider the odd operators
\begin{align*}
\D_1 &:= \sum_{\mu=1}^d \left( a_\mu \otimes b_\mu^* + a_\mu^* \otimes b_\mu \right) , & \D_2 &:= \sum_{\mu=1}^d \left( a_\mu \otimes b_\mu + a_\mu^* \otimes b_\mu^* \right) .
\end{align*}
Their squares are of the form
\begin{align*}
\D_1^2 &= H \otimes 1 + \omega \otimes \Sigma , & \D_2^2 &= H \otimes 1 - \omega \otimes \Sigma ,
\end{align*}
where the \emph{Hamiltonian} $H$ and the \emph{spin matrix} $\Sigma$ are defined as
\begin{align*}
H &:= \sum_{\mu=1}^d \left(\omega^2 x_\mu^2 - \partial_\mu^2\right) , & \Sigma &:= \sum_{\mu=1}^d [b_\mu^*,b_\mu] .
\end{align*}
\begin{remark}
Note that in \cite{GaW13} the operator $\D_2$ is defined instead as 
$\D_2 := \sum_{\mu=1}^d \left( i a_\mu \otimes b_\mu - i a_\mu^* \otimes b_\mu^* \right)$. 
However, our definition and theirs yield the same square
$$
\D_2^2 = \left(\sum_{\mu=1}^d \left( a_\mu \otimes b_\mu + a_\mu^* \otimes b_\mu^* \right)\right)^2 = \left(\sum_{\mu=1}^d \left( i a_\mu \otimes b_\mu - i a_\mu^* \otimes b_\mu^* \right) \right)^2 = \left\{ \sum_{\mu=1}^d a_\mu \otimes b_\mu , \sum_{\mu=1}^d a_\mu^* \otimes b_\mu^* \right\} .
$$
\end{remark}
\begin{prop} The data
$(\mS(\R^d), L^2(\R^d)\otimes\Lambda(\C^d), \D_1, \D_2)$ defines an even pair of spectral triples. 
\end{prop}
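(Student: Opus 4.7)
The plan is to check, in order, the four conditions in \cref{defn:even_pair} using the explicit formulas for $\D_1$, $\D_2$, $\D_1\pm\D_2$ and their anti-commutator. Throughout I will take $\E := \mS(\R^d)\otimes\Lambda(\C^d)$ as the candidate common core, which is manifestly preserved by $a_\mu,a_\mu^*,b_\mu,b_\mu^*$ and hence by $\D_1$ and $\D_2$.

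\textbf{Step 1: each $\D_i$ is a spectral triple.} From the formulas
\[
\D_1^2 = H\otimes 1 + \omega\otimes\Sigma, \qquad \D_2^2 = H\otimes 1 - \omega\otimes\Sigma,
\]
and the fact that the harmonic oscillator Hamiltonian $H$ is essentially self-adjoint on $\mS(\R^d)$ with compact resolvent, while $\Sigma\in\End(\Lambda(\C^d))$ is bounded self-adjoint, I would deduce that $\D_i^2$ is essentially self-adjoint on $\E$ with compact resolvent. Essential self-adjointness of $\D_i$ on $\E$ then follows by a standard Nelson commutator argument with $N := H\otimes 1 + 1$, and $(1+\D_i^2)^{-1/2}$ is compact. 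For $f\in\mS(\R^d)$, the commutators $[\D_i,f\otimes 1]$ act as sums of terms $(\partial_\mu f)\otimes b_\mu^{(*)}$, which are bounded; and $(f\otimes 1)(1+\D_i^2)^{-1/2}$ is compact because $f(1+H)^{-1/2}$ is (Rellich). Thus $(\mS(\R^d),L^2(\R^d)\otimes\Lambda(\C^d),\D_i)$ is a spectral triple for $i=1,2$.

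\textbf{Step 2: the sum and difference.} Using $a_\mu+a_\mu^*=2\omega x_\mu$ and $a_\mu-a_\mu^*=2\partial_\mu$, a direct expansion gives
\begin{align*}
\D_1+\D_2 &= 2\omega \sum_{\mu=1}^d x_\mu\otimes (b_\mu+b_\mu^*), \\
\D_1-\D_2 &= 2 \sum_{\mu=1}^d \partial_\mu \otimes (b_\mu^*-b_\mu).
\end{align*}
The first is a bounded-below self-adjoint multiplication operator valued in the self-adjoint matrices $b_\mu+b_\mu^*$, and the second is a constant-coefficient first-order differential operator with anti-self-adjoint fermionic coefficients (whose matrix symbol satisfies a Clifford relation). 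Both are therefore essentially self-adjoint on $\E$ by the standard arguments for multiplication and Fourier-multiplier operators on $\mS(\R^d)$. This verifies conditions 1) and 2) of \cref{defn:even_pair}.

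\textbf{Step 3: almost anti-commuting.} The key computation — and the only non-trivial step — is the anti-commutator. Using $\{A\otimes B, C\otimes D\} = AC\otimes BD + CA\otimes DB$, one checks the fermionic identity
\[
\{b_\mu+b_\mu^*,\, b_\nu^*-b_\nu\} = \delta_{\mu\nu} - \delta_{\mu\nu} = 0,
\]
so for $\mu\neq\nu$ the cross terms cancel because $[x_\mu,\partial_\nu]=0$. For $\mu=\nu$ the only surviving contribution comes from $[\partial_\mu,x_\mu]=1$, yielding
\[
\{\D_1+\D_2,\D_1-\D_2\} = 4\omega \sum_{\mu=1}^d 1\otimes (b_\mu^*-b_\mu)(b_\mu+b_\mu^*),
\]
which is bounded. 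In particular $\{\D_1+\D_2,\D_1-\D_2\}(\D_1+\D_2 - i\mu)^{-1}$ extends to a bounded operator, so $(\D_1+\D_2,\D_1-\D_2)$ is an almost anti-commuting pair. This verifies condition 3), and combining the steps concludes the proof.

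The only potentially delicate point is Step 1 (essential self-adjointness and local compactness for $\D_i$), which is however entirely standard for harmonic-oscillator-type operators. Everything else is a bookkeeping exercise in the CCR/CAR relations.
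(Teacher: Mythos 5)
Your proposal is correct and follows essentially the same route as the paper: essential self-adjointness and compact resolvent of $\D_1,\D_2$ via their squares $H\otimes 1\pm\omega\otimes\Sigma$, the explicit formulas for $\D_1\pm\D_2$, and boundedness of the anti-commutator (your direct CAR computation gives $4\omega\otimes\Sigma$, which is exactly the paper's observation that $\{\D_1+\D_2,\D_1-\D_2\}=2(\D_1^2-\D_2^2)$ is bounded). The only cosmetic differences are that the paper deduces essential self-adjointness of $\D_i$ from that of $\D_i^2$ via the Reed--Simon/Bernau square-root result rather than Nelson's commutator theorem, and it additionally records that boundedness of $\D_1^2-\D_2^2$ forces $\Dom\D_1=\Dom\D_2$.
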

\begin{proof}
The operator $H$ is well-known to be essentially self-adjoint on 
$\mS(\R^d)$ and to have compact resolvent. Since 
$\omega\otimes\Sigma$ is only a bounded perturbation of 
$H\otimes1$, it follows that $\D_1^2$ and $\D_2^2$ are 
essentially self-adjoint on $\mS(\R^d)\otimes\Lambda(\C^d)$ 
and also have compact resolvent. Since $\D_1$ and $\D_2$ are symmetric and their squares are essentially self-adjoint, it follows (see e.g.\ \cite[exercise 28, Chapter X]{Reed-Simon-II} or the proof of \cite[Lemma 3]{Ber68}) that $\D_1$ and $\D_2$ are also essentially self-adjoint. Likewise, compactness of their resolvents follows from the compactness of the resolvents of their squares. 
Furthermore, commutators of 
$\D_1$ and $\D_2$ with Schwartz functions are bounded. Hence 
$\D_1$ and $\D_2$ indeed yield even spectral triples. 

To show that these spectral triples in fact form an even pair, 
we need to check the axioms in \cref{defn:even_pair}. Since 
$\D_1^2-\D_2^2$ is bounded, it follows that $\Dom\D_1 = \Dom\D_2$.
Furthermore, the operators
\begin{align*}
\D_1+\D_2 &= \sum_{\mu=1}^d (a_\mu+a_\mu^*)\otimes(b_\mu+b_\mu^*) = \sum_{\mu=1}^d 2\omega x_\mu \otimes (b_\mu+b_\mu^*) , \\ 
\D_1-\D_2 &= \sum_{\mu=1}^d (a_\mu^*-a_\mu)\otimes(b_\mu-b_\mu^*) = \sum_{\mu=1}^d -2\partial_\mu \otimes (b_\mu-b_\mu^*) ,
\end{align*}
are essentially self-adjoint on $\mS(\R^d)\otimes\Lambda(\C^d)\subset\Dom\D_1=\Dom\D_2$. Since the graph norm of $\D_1\pm\D_2$ is bounded by the norm $\|\cdot\|_{\D_1,\D_2}$ (cf.\ \cref{lem:norms-1-2}), it follows that the domain of the closure of $\D_1\pm\D_2$ contains $\Dom\D_1\cap\Dom\D_2$, so that $\D_1\pm\D_2$ is also essentially self-adjoint on $\Dom\D_1=\Dom\D_2$. Lastly, the domain $\E := \mS(\R^d)\otimes\Lambda(\C^d)$ satisfies properties 1) and 2) in \cref{defn:almost_(anti-)commute}, and the operator $\{\D_1+\D_2,\D_1-\D_2\} = \D_1^2-\D_2^2$ is bounded on this domain, so $(\D_1+\D_2,\D_1-\D_2)$ is an almost anti-commuting pair. 
\end{proof}
From \cref{prop:pair_to_indef} we then obtain:
\begin{coro}
The operator 
$$
\D := \frac12(\D_1+\D_2) + \frac i2 (\D_1-\D_2) = \sum_{\mu=1}^d \left( \omega x_\mu \otimes (b_\mu+b_\mu^*) - i \partial_\mu \otimes (b_\mu-b_\mu^*) \right) 
$$
yields an even indefinite spectral triple $(\mS(\R^d), L^2(\R^d)\otimes\Lambda(\C^d), \D)$.
\end{coro}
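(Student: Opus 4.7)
The proof plan is essentially to invoke \cref{prop:pair_to_indef} (the reverse Wick rotation) applied to the even pair of spectral triples $(\mS(\R^d), L^2(\R^d)\otimes\Lambda(\C^d), \D_1, \D_2)$ established in the preceding proposition. That result tells us that the reverse Wick rotation of any such pair automatically yields an even indefinite unbounded Kasparov module (here a spectral triple, since $B=\C$), so all the defining properties are already packaged for us.

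The only remaining content is to verify that the reverse Wick rotation $\frac12(\D_1+\D_2)+\frac i2(\D_1-\D_2)$, defined on $\Dom\D_1\cap\Dom\D_2$, can be written in the closed form stated. First I would recall the computations already performed in the proof of the preceding proposition,
\begin{align*}
\D_1+\D_2 &= \sum_{\mu=1}^d 2\omega x_\mu \otimes (b_\mu+b_\mu^*), &
\D_1-\D_2 &= \sum_{\mu=1}^d -2\partial_\mu\otimes(b_\mu-b_\mu^*),
\end{align*}
and then substitute into the formula for $\D$ to obtain
\[
\D = \sum_{\mu=1}^d\bigl(\omega x_\mu\otimes(b_\mu+b_\mu^*) - i\partial_\mu\otimes(b_\mu-b_\mu^*)\bigr)
\]
on the initial domain $\mS(\R^d)\otimes\Lambda(\C^d)$, which is a common core for $\D_1$ and $\D_2$ by the preceding proposition.

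There is essentially no obstacle here: the substantive analytic content (essential self-adjointness of the symmetric parts, the almost anti-commuting property, boundedness of commutators with $\mS(\R^d)$, and local compactness of the inclusion of the joint domain) has already been verified in establishing that $(\mS(\R^d), L^2(\R^d)\otimes\Lambda(\C^d), \D_1, \D_2)$ is an even pair of spectral triples. Thus the corollary is an immediate application of \cref{prop:pair_to_indef} combined with the algebraic identification of $\D$ above.
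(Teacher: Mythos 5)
Your proposal is correct and matches the paper exactly: the corollary is stated as an immediate consequence of \cref{prop:pair_to_indef} applied to the pair $(\mS(\R^d), L^2(\R^d)\otimes\Lambda(\C^d), \D_1, \D_2)$, with the explicit formula for $\D$ following from the expressions for $\D_1\pm\D_2$ already computed in the preceding proof. Nothing is missing.
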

We remark that this operator $\D$ still encodes all the information of the $d$-dimensional harmonic oscillator. In particular, the Hamiltonian $H$ and the spin matrix $\Sigma$ can be recovered via
\begin{align*}
\frac12(\D\D^*+\D^*\D) &= \frac12(\D_1^2+\D_2^2) = H\otimes1 , & -\frac i2(\D^2-{\D^*}^2) &= \frac12(\D_1^2-\D_2^2) = \omega\otimes\Sigma .
\end{align*}
\begin{example}
Suppose that $d=1$. We then have the operators
\begin{align*}
a &:= \omega x + \frac{d}{dx} , & a^* &= \omega x - \frac{d}{dx} , & b &:= \mattwo{0}{1}{0}{0} , & b^* &= \mattwo{0}{0}{1}{0} ,
\end{align*}
acting on the Hilbert space $L^2(\R) \otimes \C^2$.
These operators give rise to two self-adjoint operators $\D_1$ and $\D_2$ and their reverse Wick rotation $\D$ given by
\begin{align*}
\D_1 &:= a \otimes b^* + a^* \otimes b = \mattwo{0}{a^*}{a}{0} , & \D_2 &:= a \otimes b + a^* \otimes b^* = \mattwo{0}{a}{a^*}{0} , & \D &= \mattwo{0}{\omega x-i\frac{d}{dx}}{\omega x+i\frac{d}{dx}}{0} .
\end{align*}
From \cref{thm:odd_indef-even_pair} we then see that the $1$-dimensional harmonic oscillator yields an \emph{odd} indefinite spectral triple $(\mS(\R), L^2(\R), \omega x+\frac{d}{dx})$. 
\end{example}
\subsection{Families of spectral triples}
\label{sec:families}
In this section we study families of spectral triples 
$\{(\A,{}_{\pi_x}\mH,\D_1(x))\}_{x\in M}$ parametrised by a 
Riemannian manifold $M$. We use these families to 
construct examples of pairs of spectral triples and thus of 
indefinite spectral triples. Our approach is largely based on and 
inspired by work of Kaad and Lesch \cite[\S8]{KL13}, who 
studied the spectral flow of a family of operators $\{\D_1(x)\}_{x\in M}$.
\subsubsection{The family of spectral triples}
Let us start with a brief discussion of families of operators parametrised by the manifold $M$.
\begin{defn}
A map $S(\cdot)\colon M\to\mL(\mH_1,\mH_2)$, $x\mapsto S(x)$, is said to have a \emph{uniformly bounded weak derivative} if the map is weakly differentiable (i.e.\ the map $x\mapsto \la S(x)\xi,\eta\ra$ is differentiable for each $\xi\in\mH_1$ and $\eta\in\mH_2$), the weak derivative $dS(x)\colon \mH_1 \to \mH_2\otimes T_x^*(M)$ is bounded for all $x\in M$, and the supremum $\sup_{x\in M}\|dS(x)\|$ is finite.
\end{defn}
We gather a few statements from \cite[\S8]{KL13} into the following lemma. 
\begin{lem}
\label{lem:fam_op}
Let $S(\cdot)\colon M\to\mL(\mH_1,\mH_2)$ have a uniformly bounded weak derivative. 
Then: \\
1) If $x,y\in M$ lie in the same coordinate chart, then 
$$
\big\| S(x) - S(y) \big\| \leq \sup_{z\in M} \|dS(z)\| \cdot \dist(x,y) ,
$$
where $\dist(x,y)$ denotes the geodesic distance between $x$ and $y$;\\
 2) If $\sup_{x\in M} \|S(x)\|\leq\infty$, then $S(\cdot)$ yields a well-defined operator $C_0(M,\mH_1)\to C_0(M,\mH_2)$ by setting
$$
\big(S(\cdot)\psi\big)(x) := S(x) \psi(x) ,
$$ 
for $\psi\in C_0(M,\mH_1)$.
\end{lem}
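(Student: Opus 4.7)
The plan is to prove the two statements essentially separately, using the weak differentiability hypothesis in the first and then leveraging the resulting Lipschitz-type estimate in the second.

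For part 1), I would fix a coordinate chart containing both $x$ and $y$, and connect them by a smooth path $\gamma\colon[0,1]\to M$ lying in this chart. For arbitrary $\xi\in\mH_1$ and $\eta\in\mH_2$, I would consider the scalar function $f(t):=\la S(\gamma(t))\xi,\eta\ra$. By the assumption of weak differentiability, $f$ is differentiable with
\[
f'(t) = dS(\gamma(t))\xi\cdot\dot\gamma(t)\bigl(\eta^*\bigr),
\]
so the Cauchy--Schwarz inequality together with the definition of the operator norm of $dS(\gamma(t))\colon \mH_1\to\mH_2\otimes T_{\gamma(t)}^*M$ gives $|f'(t)|\leq\sup_{z\in M}\|dS(z)\|\cdot\|\xi\|\cdot\|\eta\|\cdot\|\dot\gamma(t)\|$. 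Integrating from $0$ to $1$ and taking the infimum over such paths yields
\[
\bigl|\la (S(x)-S(y))\xi,\eta\ra\bigr| \leq \sup_{z\in M}\|dS(z)\|\cdot\|\xi\|\cdot\|\eta\|\cdot\dist(x,y),
\]
from which the claimed operator-norm bound follows by taking the supremum over unit vectors $\xi,\eta$.

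For part 2), writing $C:=\sup_{x\in M}\|S(x)\|<\infty$, the pointwise estimate $\|S(x)\psi(x)\|\leq C\|\psi(x)\|$ shows both that $S(\cdot)\psi$ is bounded and that it vanishes at infinity as soon as $\psi\in C_0(M,\mH_1)$ does. For continuity at a point $x_0\in M$, I would pick a coordinate chart around $x_0$ and decompose
\[
S(x)\psi(x) - S(x_0)\psi(x_0) = \bigl(S(x)-S(x_0)\bigr)\psi(x_0) + S(x)\bigl(\psi(x)-\psi(x_0)\bigr).
\]
The first term is controlled by part 1) (for $x$ sufficiently close to $x_0$ to lie in the chosen chart), and the second by the uniform bound $C$ combined with the continuity of $\psi$; both go to $0$ as $x\to x_0$.

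The main obstacle, such as it is, is the first part, where care is needed with the weak derivative: one must justify that $f(t)=\la S(\gamma(t))\xi,\eta\ra$ is not merely differentiable but absolutely continuous with derivative bounded as claimed, so that the fundamental theorem of calculus applies. This is immediate from the weak differentiability hypothesis and the uniform bound on $\|dS\|$, since the derivative is then bounded in $t$. After that, everything reduces to elementary estimates, and the extension to the operator on $C_0(M,\mH_1)$ in part 2) is straightforward.
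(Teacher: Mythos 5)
Your proof is correct and takes essentially the same approach as the paper: for part 2) the paper uses exactly the same decomposition of $S(x)\psi(x)-S(y)\psi(y)$ into a term controlled by part 1) and a term controlled by the uniform bound on $\|S(\cdot)\|$ together with the continuity of $\psi$. For part 1) the paper simply cites \cite[Remark 8.4]{KL13}, and your path-integration argument (with the observation that everywhere-differentiability plus a uniform derivative bound justifies the fundamental theorem of calculus) is the standard proof of that Lipschitz estimate.
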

\begin{proof}
We refer to \cite[Remark 8.4, 2.]{KL13} for a short proof of 1). 
For 2) we need to check that $S(x)\psi(x)$ is continuous in $x$. We have the inequality
$$
\big\| S(x) \psi(x) - S(y) \psi(y) \big\| 
\leq \big\| S(x) - S(y) \big\| \big\| \psi(x) \big\| + \big\| S(y) \big\| \big\| \psi(x) - \psi(y) \big\| .
$$
As $y\to x$, each of these terms approaches zero; 
the first term by the first statement of this lemma, the second by continuity of $\psi$. 
\end{proof}
\begin{defn}
\label{defn:family}
A \emph{weakly differentiable family of spectral triples} $\{(\A,{}_{\pi_x}\mH,\D_1(x))\}_{x\in M}$ parametrised by the manifold $M$ is a family of spectral triples $\{(\A,{}_{\pi_x}\mH,\D_1(x))\}_{x\in M}$ such that the following conditions are satisfied:\vspace{-6pt}
\begin{itemize}
\item there exists another Hilbert space $W$ which is continuously and densely embedded in $\mH$ such that the inclusion map $\iota\colon W\hookrightarrow\mH$ is \emph{locally compact}, i.e.\ the composition $\pi_x(a)\circ\iota$ is compact for each $x\in M$ and $a\in\A$; \vspace{-6pt}
\item the domain of $\D_1(x)$ is independent of $x$ and equals $W$, and the graph norm of $\D_1(x)$ is uniformly equivalent to the norm of $W$ (i.e.\ there exist constants $C_1,C_2>0$ such that $C_1\|\xi\|_W \leq \|\xi\|_{\D_1(x)} \leq C_2\|\xi\|_W$ for all $\xi\in W$ and all $x\in M$);\vspace{-6pt}
\item for each $a\in\A$, the maps $\D_1(\cdot)\colon M\to\mL(W,\mH)$ and $\pi_\cdot(a)\colon M\to\mL(\mH)$ have uniformly bounded weak derivatives, and the map $[\D_1(\cdot),\pi_\cdot(a)]\colon M\to\mL(\mH)$ is continuous. 
\end{itemize}
\end{defn}
\begin{remark}
\begin{enumerate}
\item The unbounded operator $\D_1(x)\colon\Dom\D_1(x)\to\mH$ is considered a bounded operator $W\to\mH$, where $W=\Dom\D_1(x)$ is a Hilbert space with respect to the graph inner product of $\D_1(x)$. Since the graph norms of $\D_1(x)$ are equivalent for all $x\in M$, it follows that the bound on the operator norm of $d\D_1(x)$ is thus a relative bound with respect to $\D_1(y)$, for any $y\in M$. \vspace{-6pt}
\item The requirement that the graph norm of $\D_1(x)$ is \emph{uniformly} equivalent to the norm of $W$ implies that $\sup_{x\in M} \|\D_1(x)\|$ is finite. \vspace{-6pt}
\item The case where $\A=\C$ and $\pi_x$ is scalar multiplication brings us back to the case of a family of operators $\{\D_1(x)\}$ as studied in \cite[\S8]{KL13}. 
\end{enumerate}
\end{remark}
Consider the Hilbert $C_0(M)$-module $C_0(M,\mH)$. The family of representations $\pi_x\colon A\to\mL(\mH)$ determines a representation $\pi\colon A\otimes C_0(M) \simeq C_0(M,A)\to C_0(M,\mL(\mH)) \simeq \End_{C_0(M)}(C_0(M,\mH))$ by setting
$$
(\pi(a)\psi)(x) := \pi_x(a(x)) \psi(x) ,
$$
for $\psi\in C_0(M,\mH)$ and $a\in C_0(M,A)$. 
The family of operators $\{\D_1(x)\}$ on the Hilbert space 
$\mH$ defines a new operator $\D_1(\cdot)$ on the 
$C_0(M)$-module $C_0(M,\mH)$ with domain $C_0(M,W)$ by setting
$$
(\D_1(\cdot)\psi)(x) := \D_1(x)\psi(x) .
$$
The assumption of weak differentiability is more than 
sufficient to ensure that $\pi$ and $\D_1(\cdot)$ are well-defined (see \cref{lem:fam_op}, part 2). 
The operator $\D_1(\cdot)\colon C_0(M,W) \to C_0(M,\mH)$ is densely defined and symmetric. 
\begin{remark}
In \cite[\S8]{KL13} the family $\{\D_1(x)\}_{x\in M}$ is used to construct 
a class in the odd K-theory $K_1(C_0(M)) = KK^1(\C,C_0(M))$ of 
$C_0(M)$. In order to ensure that $\D_1(\cdot)$ has compact resolvent 
(as an operator on the right $C_0(M)$-module $C_0(M,\mH)$), 
it is then necessary to replace $\D_1(\cdot)$ by $f^{-1}\D_1(\cdot)$, for a 
strictly positive function $f\in C_0^1(M)$. In our approach we aim to construct instead 
a class in $KK^1(C_0(M,A),C_0(M))$, for which introducing this 
function $f$ is not necessary, as now we only need the 
resolvent to be \emph{locally} compact (for the left action by $C_0(M,A)$). \end{remark}
\begin{prop}[{cf.\ \cite[Proposition 8.7]{KL13}}]
\label{prop:fam_Kasp_mod}
If $\{(\A,{}_{\pi_x}\mH,\D_1(x))\}_{x\in M}$ is a weakly 
differentiable family of spectral triples, then the triple $(\A\odot C_c^\infty(M), C_0(M,\mH)_{C_0(M)},\D_1(\cdot))$ 
is an odd unbounded Kasparov $C_0(M,A)$-$C_0(M)$-module. 
\end{prop}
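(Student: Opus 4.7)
Since $\D_1(\cdot)$ will be self-adjoint (so $\Re\D=\D_1(\cdot)$ and $\Im\D=0$), the odd indefinite axioms 1)--3) of \cref{defn:odd_indef_Kasp_mod} reduce to self-adjointness and regularity. The three non-trivial points to verify are therefore: (a) regularity and self-adjointness of $\D_1(\cdot)$ on the right Hilbert $C_0(M)$-module $C_0(M,\mH)$, with domain $C_0(M,W)$; (b) boundedness of the commutators $[\D_1(\cdot),\pi(a)]$ for $a\in\A\odot C_c^\infty(M)$; and (c) local compactness of the resolvent with respect to the left action of $C_0(M,A)$. The strategy closely parallels \cite[Proposition 8.7]{KL13}, but I must additionally track the left action.

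For (a), the plan is to define candidate resolvents fibrewise by $(R_\pm\psi)(x):=(\D_1(x)\pm i)^{-1}\psi(x)$ and to show they are adjointable operators on $C_0(M,\mH)$ of norm $\leq 1$. The key ingredient is norm-continuity of $x\mapsto(\D_1(x)\pm i)^{-1}$, which I would obtain from the resolvent identity
\[
(\D_1(x)\pm i)^{-1} - (\D_1(y)\pm i)^{-1} = (\D_1(x)\pm i)^{-1}\big(\D_1(y)-\D_1(x)\big)(\D_1(y)\pm i)^{-1},
\]
combined with the Lipschitz-type bound on $x\mapsto\D_1(x)\in\mL(W,\mH)$ supplied by the first part of \cref{lem:fam_op} and the uniform equivalence of graph norms with the norm of $W$. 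Once $R_\pm$ sends $C_0(M,\mH)$ into $C_0(M,W)$, a direct computation gives $R_+^* = R_-$; density of the ranges then yields self-adjointness and regularity of $\D_1(\cdot)$ by the usual resolvent criterion.

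For (b), the representation acts by $(\pi(a)\psi)(x)=\pi_x(a(x))\psi(x)$ and $\D_1(\cdot)$ acts fibrewise, so for $a\otimes f\in\A\odot C_c^\infty(M)$ one simply has $[\D_1(\cdot),\pi(a\otimes f)]\psi(x) = f(x)[\D_1(x),\pi_x(a)]\psi(x)$. By hypothesis the map $x\mapsto[\D_1(x),\pi_x(a)]$ is continuous, and multiplication by the compactly supported $f$ makes it a bounded endomorphism of $C_0(M,\mH)$.

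Step (c) is the main obstacle. Using the standard identification $\End^0_{C_0(M)}(C_0(M,\mH))\cong C_0(M,\mK(\mH))$ and a density argument on the left algebra, it suffices to show that for $a_0\in\A$ and $f\in C_c(M)$ the map
\[
x\longmapsto f(x)\,\pi_x(a_0)\,(1+\D_1(x)^2)^{-1/2}
\]
lies in $C_c(M,\mK(\mH))$. Each fibre is compact because $(\A,\mH,\D_1(x))$ is a spectral triple, so the core issue is upgrading the weak differentiability assumptions of \cref{defn:family} to genuine operator-norm continuity at the level of the compact operator $\pi_x(a_0)(1+\D_1(x)^2)^{-1/2}$. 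I would obtain norm-continuity of $x\mapsto(1+\D_1(x)^2)^{-1/2}$ by feeding the resolvent continuity from (a) into the integral representation
\[
(1+\D^2)^{-1/2} = \frac{1}{\pi}\int_0^\infty \lambda^{-1/2}(1+\lambda+\D^2)^{-1}\,d\lambda,
\]
and norm-continuity of $x\mapsto\pi_x(a_0)$ from the uniformly bounded weak derivative of $\pi_\cdot(a_0)$ via \cref{lem:fam_op}. The product of these with $f\in C_c(M)$ then sits in $C_c(M,\mK(\mH))$, giving local compactness and completing the verification.
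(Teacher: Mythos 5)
Your proposal is correct and follows essentially the same route as the paper: resolvent continuity in $x$ via the resolvent identity and the Lipschitz bound of \cref{lem:fam_op} gives self-adjointness and regularity, the commutators are computed fibrewise, and local compactness is checked through the identification of the compacts with $C_0(M,\mK(\mH))$. The only (cosmetic) difference is in step (c), where the paper works directly with $\pi_x(a)(\D_1(x)\pm i)^{-1}$ -- whose norm-continuity is already in hand from step (a) -- rather than passing to $(1+\D_1(x)^2)^{-1/2}$ via an integral representation, which saves you the extra dominated-convergence argument.
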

\begin{proof}
The operator $\D_1(x)$ is self-adjoint for each $x\in M$. It then follows from the 
local-global principle \cite[Theorems 4.2, 5.6, and 5.8]{KL12} that $\D_1(\cdot)$ is self-adjoint and regular. However, this can also be seen directly. First, observe that the resolvent $(\D_1(x)\pm i)^{-1}$ depends continuously 
on $x$, since by the resolvent identity and the first statement of \cref{lem:fam_op} we have
\begin{align*}
\big\|(\D_1(x)\pm i)^{-1} - (\D_1(y)\pm i)^{-1}\big\| 
&= \big\|(\D_1(x)\pm i)^{-1} (\D_1(y) - \D_1(x)) (\D_1(y)\pm i)^{-1} \big\| \\
&\leq \big\|(\D_1(x)\pm i)^{-1}\big\| \; \big\|\D_1(y) - \D_1(x)\big\| \; \big\|(\D_1(y)\pm i)^{-1} \big\| \\
&\leq \big\|\D_1(y) - \D_1(x)\big\| 
\leq \sup_{z\in M} \|d(\D_1(z))\| \cdot \dist(x,y) .
\end{align*}
Since $\D_1(x)\pm i$ is surjective for each $x\in M$, this implies that $\D_1(\cdot)\pm i$ is also surjective, and hence $\D_1(\cdot)$ is self-adjoint and regular. 

The algebraic tensor product $\A\odot C_c^\infty(M)$ is dense in $C_0(M,A)$, and for 
$a\otimes f\in \A\odot C_c^\infty(M)$ the commutators 
$$
\big[ \D_1(\cdot) , \pi(a\otimes f) \big](x) = f(x) \big[\D_1(x),\pi_x(a)\big]  
$$
are bounded for each $x$. By assumption such commutators are 
continuous, and the compact support of $f$ then ensures that they are globally bounded. 

It remains to show that $\pi(a\otimes f) (\D_1(\cdot)\pm i)^{-1}$ is 
compact (as an operator on the $C_0(M)$-module 
$C_0(M,\mH)$) for each $a\in A$ and $f\in C_0(M)$. 
The 
compact operators on $C_0(M,\mH)$ are given by $C_0(M,\mK(\mH))$. 
The operator $\pi_x(a) (\D_1(x)\pm i)^{-1}$ is compact and bounded by 
$\|a\|$ for each $x\in M$ (since $(\A,{}_{\pi_x}\mH,\D_1(x))$ is a spectral triple). 
Hence the map $M\to \mK(\mH)$, $x \mapsto \pi_x(a)(\D_1(x)\pm i)^{-1}$ 
is continuous and globally bounded by $\|a\|$, so if we also multiply by 
$f\in C_0(M)$ we get $\pi(a\otimes f)(\D_1(\cdot)\pm i)^{-1} \in C_0(M,\mK(\mH))$.
\end{proof}
\subsubsection{The Kasparov product}
We would like to use the Kasparov product to `glue together' our family of spectral triples. For this purpose, we need to consider a spectral triple on the manifold $M$, which we construct as follows. 
From here on we will assume that the Riemannian manifold $M$ is complete. 
Consider a first-order symmetric elliptic differential operator $\D_2\colon \Gamma_c^\infty(M,F)\to\Gamma_c^\infty(M,F)$ 
on a hermitian 
vector bundle $F\to M$, which has \emph{bounded propagation speed}, 
i.e.\ the principal symbol $\sigma_{\D_2}\colon T^*M\to\End(F)$ satisfies
$$
\sup \big\{ \|\sigma_{\D_2}(x,\xi)\| \;\big|\; (x,\xi)\in T^*M,\;g(\xi,\xi)\leq1\big\} < \infty .
$$
\begin{prop}[cf.\ {\cite[\S8]{KL13}}]
\label{prop:parameter_triple}
The operator $\D_2$ yields an odd spectral triple $(C_c^\infty(M), L^2(M,F), \D_2)$. 
\end{prop}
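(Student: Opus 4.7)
The plan is to verify the three defining properties of an odd spectral triple for the triple $(C_c^\infty(M), L^2(M,F), \D_2)$: essential self-adjointness of $\D_2$ on $\Gamma_c^\infty(M,F)$, boundedness of commutators $[\D_2,f]$ for $f\in C_c^\infty(M)$, and local compactness of the resolvent.

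For essential self-adjointness, I would invoke the classical Chernoff-type result that on a complete Riemannian manifold, every symmetric first-order elliptic differential operator with bounded propagation speed is essentially self-adjoint on compactly supported smooth sections (this is exactly the content used in \cite[Proposition 10.2.11]{Higson-Roe00}). The completeness of $M$ and the bounded propagation speed assumption on $\sigma_{\D_2}$ are both present, so the hypotheses apply directly; we denote the self-adjoint closure again by $\D_2$.

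For boundedness of commutators, recall that for a first-order differential operator and $f\in C_c^\infty(M)$, the commutator $[\D_2,f]$ is a zero-order operator acting as pointwise multiplication by the principal symbol evaluated on $df$, i.e.\ $[\D_2,f]\psi = \sigma_{\D_2}(df)\psi$. Since $f$ has compact support, $df$ is a bounded compactly supported one-form, and the bounded propagation speed condition $\sup\{\|\sigma_{\D_2}(x,\xi)\|: g(\xi,\xi)\leq1\}<\infty$ gives that $\sigma_{\D_2}(df)$ is a bounded endomorphism of $F$, hence a bounded operator on $L^2(M,F)$. For local compactness of the resolvent, I would use standard elliptic regularity: ellipticity of $\D_2$ implies $\Dom\D_2\subset H^1_{\mathrm{loc}}(M,F)$ with the graph norm dominating the local $H^1$-norm on any compact set. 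Hence for $f\in C_c^\infty(M)$, multiplication by $f$ maps $\Dom\D_2$ continuously into the compactly supported Sobolev space $H^1_{\mathrm{comp}}(M,F)$, which by the Rellich--Kondrachov theorem embeds compactly into $L^2(M,F)$. Composing with the bounded operator $(\D_2\pm i)^{-1}\colon L^2(M,F)\to\Dom\D_2$ yields compactness of $f(\D_2\pm i)^{-1}$; by a density and approximation argument this extends to all $f\in C_0(M)$ and thus suffices for the spectral triple condition.

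The only non-routine step is essential self-adjointness, which rests on the Chernoff finite-propagation argument. Boundedness of commutators and local compactness of the resolvent are standard elliptic-theory consequences, and in fact the proof can simply be a short verification citing \cite{Higson-Roe00} for both.
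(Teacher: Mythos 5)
Your proposal is correct and follows essentially the same route as the paper: essential self-adjointness from completeness plus bounded propagation speed via \cite[Proposition 10.2.11]{Higson-Roe00}, boundedness of $[\D_2,f]=\sigma_{\D_2}(df)$ since $\D_2$ is first order and $f$ has compact support, and local compactness of the resolvent from ellipticity (the paper cites \cite[Proposition 10.5.2]{Higson-Roe00}, whose proof is exactly the elliptic-regularity/Rellich argument you spell out). The only cosmetic difference is that compact support of $df$ already makes $\sigma_{\D_2}(df)$ bounded without invoking bounded propagation speed, but this does not affect correctness.
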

\begin{proof}
The completeness of $M$ and the bounded propagation 
speed ensure the essential self-adjointness of $\D_2$ 
on $\Gamma_c^\infty(M,F)$ (see e.g.\ \cite[Proposition 10.2.11]{Higson-Roe00}). 
Since $\D_2$ is a first-order differential operator, the commutator with a smooth, compactly supported function is bounded. 
Lastly, ellipticity of $\D_2$ 
ensures that its resolvent is locally compact (see e.g.\ \cite[Proposition 10.5.2]{Higson-Roe00}).
\end{proof}

We are now ready to construct the odd unbounded Kasparov product of the $C_0(M,A)$-$C_0(M)$-module $(\A\odot C_c^\infty(M), C_0(M,\mH),\D_1(\cdot))$ with the $C_0(M)$-$\C$-module $(C_c^\infty(M), L^2(M,F),\D_2)$. 
On the internal tensor product of the Hilbert modules $C_0(M,\mH) \otimes_{C_0(M)} L^2(M,F) \simeq L^2(M,\mH\otimes F)$ we consider the operator $\D_1(\cdot)\otimes 1$, which we simply denote as $\D_1(\cdot)$ on $L^2(M,\mH\otimes F)$. Using the identification $L^2(M,\mH\otimes F) \simeq \mH\otimes L^2(M,F)$, we also consider the operator $1\otimes\D_2$, which we simply denote as $\D_2$. 
\begin{thm}
\label{thm:DS_triple}
Let $M$ be a complete oriented Riemannian manifold of dimension $m$, and let $\mH$ be a separable Hilbert space. 
Let $\D_2$ be a closed first-order symmetric elliptic differential operator on a hermitian vector bundle $F\to M$, which has bounded propagation speed. 
Let $\big\{(\A,{}_{\pi_x}\mH,\D_1(x))\big\}_{x\in M}$ be a weakly differentiable family of spectral triples. 
Then the following statements hold:
\begin{enumerate}
\item the operator
$$
\D_1\times\D_2 := \mattwo{0}{\D_1(\cdot) - i\D_2}{\D_1(\cdot) + i\D_2}{0} \colon \left(\Dom(\D_1(\cdot))\cap\Dom(\D_2)\right)^{\oplus2} \to L^2(M,\mH\otimes F)^{\oplus2} 
$$
yields an even spectral triple $(\A\odot C_c^\infty(M) , L^2(M,\mH\otimes F)^{\oplus2} , \D_1\times\D_2)$ which represents the odd unbounded Kasparov product of $(\A\odot C_c^\infty(M), C_0(M,\mH)_{C_0(M)},\D_1(\cdot))$ with $(C_c^\infty(M), L^2(M,F),\D_2)$;
\item the triple $(\A\odot C_c^\infty(M), L^2(M,\mH\otimes F) , \D_1(\cdot) + i \D_2)$ is an odd indefinite spectral triple. 
\end{enumerate}
\end{thm}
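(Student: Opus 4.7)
The plan is to reduce both parts to a single technical core: showing that $\bigl(\D_1(\cdot),\D_2\bigr)$, viewed as operators on the Hilbert space $L^2(M,\mH\otimes F)$, form an almost commuting pair of regular self-adjoint operators in the sense of \cref{defn:almost_(anti-)commute}. Granted this, \cref{thm:sum-sa-reg} of Kaad--Lesch yields self-adjointness and regularity of the matrix operator $\D_1\times\D_2$, which is precisely the operator $\til\D_+$ that \cref{thm:odd_indef-even_pair} associates to $\D=\D_1(\cdot)+i\D_2$. Part (2) then drops out of \cref{thm:odd_indef-even_pair}, and only the Kasparov-product identification in (1) needs additional argument beyond checking the remaining spectral triple axioms.

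Self-adjointness of $\D_1(\cdot)$ on $L^2(M,\mH\otimes F)$ is obtained by the same resolvent-surjectivity argument as in \cref{prop:fam_Kasp_mod}, and self-adjointness of $\D_2$ is \cref{prop:parameter_triple}. Both admit the dense common core $\E:=\Gamma_c^\infty(M,W\otimes F)$ of smooth compactly supported sections taking values in the common domain $W$. The crux is the almost-commuting estimate. Pointwise, $\D_1(x)$ commutes with the coefficient functions of $\D_2$ because they act on distinct tensor factors, so in a local trivialisation $\D_2=\sum_j\sigma_{\D_2}^j(x)\partial_j+B(x)$ one computes
$$
[\D_1(\cdot),\D_2]\Psi(x) = -\sum_j \sigma_{\D_2}^j(x)\,\bigl(d_{\partial_j}\D_1(x)\bigr)\Psi(x)
$$
on $\E$. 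The right-hand side pairs the uniformly bounded weak derivative $d\D_1\colon M\to\mL(W,\mH\otimes T^*M)$ against the principal symbol $\sigma_{\D_2}$, which is bounded by the bounded-propagation-speed hypothesis. Since the $W$-norm is uniformly equivalent to the graph norm of $\D_1(\cdot)$, this gives a uniform bound of $[\D_1(\cdot),\D_2]$ as an operator $W\to L^2(M,\mH\otimes F)$, and hence boundedness of $[\D_1(\cdot),\D_2](\D_1(\cdot)-i\mu)^{-1}$, since the resolvent maps $L^2$ into $W$ with uniform bound. A partition-of-unity argument globalises the local estimates.

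With the almost commuting pair in hand, the remaining conditions on the even spectral triple $(\A\odot C_c^\infty(M),L^2(M,\mH\otimes F)^{\oplus 2},\D_1\times\D_2)$ are routine: commutators with elementary tensors $a\otimes f\in\A\odot C_c^\infty(M)$ split as $[\D_1(\cdot),a]\otimes f \pm i\,a\otimes[\D_2,f]$, each bounded by the spectral triple hypotheses on the factors; local compactness of $\pi(a\otimes f)(1+(\D_1\times\D_2)^2)^{-1/2}$ follows from the pointwise compactness $\pi_x(a)(\D_1(x)\pm i)^{-1}$ of \cref{prop:fam_Kasp_mod} (uniform in $x$ on the compact support of $f$) combined with local compactness of $(\D_2\pm i)^{-1}$ from \cref{prop:parameter_triple}, the two being joined via the just-established relative bound between $[\D_1(\cdot),\D_2]$ and $\D_1(\cdot)$. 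The identification with the unbounded Kasparov product is obtained by Kucerovsky's criterion: the connection condition is immediate because $\D_1(\cdot)$ acts pointwise along $M$, and the positivity condition reduces to the same relative bound. The principal obstacle is unambiguously the almost-commuting estimate of the second paragraph; the hypotheses on $\D_1(\cdot)$ and $\D_2$ are tailor-made for it, but converting the pointwise computation into a genuine Hilbert-module relative bound in the sense of \cref{defn:almost_(anti-)commute} requires that every constant be controlled independently of $x$ and of the local trivialisation, and that the resolvent capping in the definition respect the internal tensor product structure.
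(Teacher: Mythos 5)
Your architecture is essentially the paper's: the technical core is that $(\D_1(\cdot),\D_2)$ is an almost commuting pair, from which \cref{thm:sum-sa-reg} gives self-adjointness and regularity of $\D_1\times\D_2$ and identifies $\Re\D=\D_1(\cdot)$, $\Im\D=\D_2$ for $\D=\D_1(\cdot)+i\D_2$. The differences are in how the surrounding pieces are assembled. First, the paper does not re-derive the almost-commuting estimate: it cites \cite[Proposition 8.11]{KL13} outright, whereas you reprove it via the local symbol computation $[\D_1(\cdot),\D_2]\Psi(x)=-\sum_j\sigma_{\D_2}^j(x)\,(d_{\partial_j}\D_1(x))\Psi(x)$; your computation is correct and your closing caveat (uniformity of constants, and the domain conditions 1) and 2) of \cref{defn:almost_(anti-)commute} for the resolvent $(\D_1(\cdot)-i\mu)^{-1}$) is exactly where the content of the cited result lies. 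Second, for the product identification the paper verifies that the data form a \emph{correspondence} in the sense of \cite[Definition 6.3]{KL13} --- the only new check beyond \cite[Proposition 8.11]{KL13} being boundedness of $[\D_2,\pi(a\otimes f)]=f\,\sigma_{\D_2}(d(\pi_\cdot(a)))+\sigma_{\D_2}(df)\pi_\cdot(a)$ --- and then invokes \cite[Theorems 6.7 \& 7.5]{KL13}, which deliver the spectral triple property and the product class simultaneously; you instead propose Kucerovsky's criterion. Third, you obtain part (2) by recognising $\D_1\times\D_2$ as the operator $\til\D_+$ of \cref{thm:odd_indef-even_pair}; the paper verifies the odd indefinite axioms directly, but its closing remark explicitly endorses your route as an equivalent alternative.

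The one place your argument is genuinely thin is local compactness. The pointwise compactness of $\pi_x(a)(\D_1(x)\pm i)^{-1}$ gives compactness only on the Hilbert $C_0(M)$-module $C_0(M,\mH)$ (this is \cref{prop:fam_Kasp_mod}), \emph{not} on the Hilbert space $L^2(M,\mH\otimes F)$: compactness in the $M$-directions must come from $\D_2$, and ``joining'' the two resolvents really is the substance of the unbounded product theorem rather than a consequence of the relative bound alone. This matters for your ordering in particular, because Kucerovsky's criterion presupposes that the candidate module is already a Kasparov module, so you cannot defer local compactness to the product identification. The paper sidesteps this by using the constructive Kaad--Lesch theorem, which produces the product module and its local compactness in one stroke; if you keep the Kucerovsky route, you need to supply the standard integral-formula estimate comparing $(1+(\D_1\times\D_2)^2)^{-1}$ with $(1+\D_1(\cdot)^2)^{-1}$ and $(1+\D_2^2)^{-1}$ (using that the cross term $[\D_1(\cdot),\D_2]$ is relatively bounded), or simply cite the same product theorem for that step.
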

\begin{proof}
For the first statement, we need to show that we have a correspondence (as defined in \cite[Definition 6.3]{KL13}) from $(\A\odot C_c^\infty(M), C_0(M,\mH)_{C_0(M)},\D_1(\cdot))$ to $(C_c^\infty(M), L^2(M,F),\D_2)$, so that we can apply \cite[Theorems 6.7 \& 7.5]{KL13}. For a family of operators, this has been shown in \cite[Proposition 8.11]{KL13}. For a family of spectral triples, the only difference is that we now consider a left action by $C_0(M,A)$ (instead of $\C$) on $C_0(M,\mH)$. Thus we need to check the third condition in 
\cite[Definition 6.3]{KL13},
which requires that the commutator $[\D_2,\pi(a\otimes f)\otimes1]\colon\Dom(\D_2)\to C_0(M,\mH) \otimes_{C_0(M)} L^2(M,F) \simeq L^2(M,\mH\otimes F)$ is well-defined and bounded for all $a\otimes f\in \A\odot C_c^\infty(M)$. 

The commutator with $f\in C_c^\infty(M)$ simply yields $[\D_2,f] = \sigma_{\D_2}(df)$, which is bounded because $f\in C_c^\infty(M)$ implies that $df$ is bounded, and because $\sigma_{\D_2}$ is completely bounded by \cite[Proposition 8.2]{KL13}. Similarly, the commutator $[\D_2,\pi_x(a)] = \sigma_{\D_2}(d(\pi_x(a)))$ is bounded, because by assumption the weak derivative of $\pi_x(a)$ is uniformly bounded. Thus we indeed have a correspondence, and the first statement then follows from \cite[Theorems 6.7 \& 7.5]{KL13}. 

For the second statement, consider the operator $\D := \D_1(\cdot) + i\D_2$
on $\Dom\D = \Dom\D_1(\cdot)\cap\Dom\D_2$. We know from 
\cite[Proposition 8.11]{KL13}
that $(\D_1(\cdot),\D_2)$ is an almost commuting pair, so it follows from 
\cref{thm:sum-sa-reg} 
that $\D^* = \D_1(\cdot)-i\D_2$ on $\Dom\D^* = \Dom\D_1(\cdot)\cap\Dom\D_2$, and therefore we have $\Re\D = \D_1(\cdot)$ and $\Im\D = \D_2$ on this domain. 

The operators $\D_1(\cdot)$ and $\D_2$ are both essentially self-adjoint on $\Dom\D_1(\cdot)\cap\Dom\D_2$ (for $\D_1(\cdot)$ this follows from \cref{lem:pair_S_ess-sa}, and for $\D_2$ this follows from the completeness of the Riemannian manifold). The domain $\Dom\D_1(\cdot)\cap\Dom\D_2$ is preserved by $\A\odot C_c^\infty(M)$, and both $\D_1(\cdot)$ and $\D_2$ have bounded commutators with $\A\odot C_c^\infty(M)$. Lastly, the inclusion $\iota\colon \Dom\D_1(\cdot)\cap\Dom\D_2\into L^2(M,\mH\otimes F)$ is locally compact because (by the first statement) $\D_1\times\D_2$ has locally compact resolvent. 
\end{proof}
\begin{remark}
In the construction of the 
operator $\D_1\times\D_2$ we may replace $\D_2$ by $-\D_2$, without affecting the first statement of the above theorem. We thus obtain two different spectral triples with the operators 
\begin{align*}
\mattwo{0}{\D_1(\cdot) - i\D_2}{\D_1(\cdot) + i\D_2}{0}
\quad\text{and}\quad 
\mattwo{0}{\D_1(\cdot) + i\D_2}{\D_1(\cdot) - i\D_2}{0} .
\end{align*}
The second statement of the theorem could have been proved alternatively by showing that these two spectral triples form a \emph{pair} of spectral triples. It then follows from \cref{thm:odd_indef-even_pair} that $\D = \D_1(\cdot) + i\D_2$ yields an odd indefinite spectral triple. 
\end{remark}
\subsubsection{Generalised Lorentzian cylinders}
Dirac operators on generalised pseudo-Riemannian cylinders have been studied in \cite{BGM05}. Here we will specialise to the Lorentzian case, and we will show that this provides an example of a family of spectral triples parametrised by the real line $\R$. 

Let $\Sigma$ be an $(n-1)$-dimensional smooth spin manifold, and let $g_t$ be a smooth family of \emph{complete} Riemannian metrics on $\Sigma$ parametrised by $t\in\R$. Consider the \emph{generalised Lorentzian cylinder} $(M,g) := (\Sigma\times\R, g_t - dt^2)$. The vector field $\nu := \partial_t$ is a unit timelike vector field which is orthogonal to the hypersurfaces $\Sigma_t := (\Sigma\times\{t\},g_t)$. 

Since each hypersurface $\Sigma_t$ is a complete Riemannian spin manifold, we obtain for each $t\in\R$ a spectral triple
$$
\big( C_c^\infty(\Sigma), L^2(\Sigma_t,\bS_t), \sD(t) \big) ,
$$
where $\bS_t$ is the spinor bundle over $\Sigma_t$, and $\sD(t) = \gamma_t\circ\nabla^{\bS_t}$ is the canonical Dirac operator on $\Sigma_t$. 

For $x\in\Sigma$ and $t_0,t_1\in\R$, parallel transport along the curve $t\mapsto (t,x)\in M$ (i.e.\ an integral curve of the vector field $\nu$) yields a linear isometry $\tau_{t_0}^{t_1}\colon(\bS_{t_0})_x\to(\bS_{t_1})_x$. 
The Hilbert spaces $\mH_t := L^2(\Sigma_t,\bS_t)$ of square-integrable spinors on $\Sigma_t$ can be identified via this parallel transport, and we shall write $\mH := \mH_0$. Under this identification, the action of $C_c^\infty(\Sigma)$ on $\mH_t\simeq\mH$ (given by pointwise multiplication) does not depend on $t$. 

A local orthonormal frame $\{e_1,\ldots,e_{n-1}\}$ on $\Sigma_0$ can be extended to an orthonormal frame $\{\nu,e_1,\ldots,e_{n-1}\}$ on $M$ via parallel transport along $\nu$, and this extended frame then satisfies $\nabla_\nu e_j = 0$. Consequently, the Clifford multiplication $\gamma$ on $(M,g)$ satisfies 
$$
\big[\nabla^S_\nu , \gamma(e_j)\big] = \gamma(\nabla_\nu e_j) = 0 ,
$$
so $\gamma$ is parallel along the vector field $\nu$. Under the identification 
$\tau_t^0\colon \mH_t \to \mH_0$, 
the Clifford multiplication $\gamma_t$ on $\mH_t$ is mapped to $\tau_t^0 \circ \gamma_t(\tau_0^t X) \circ \tau_0^t = \gamma_0(X)$ on $\mH_0$ (see also \cite[\S5]{BGM05}). Thus, upon identifying $\mH_t \simeq \mH_0$, the Clifford multiplication becomes independent of $t$. 

\begin{prop}
Let $(M,g)$ be an even-dimensional generalised Lorentzian cylinder as constructed above. Suppose that the smooth family of metrics $g_t$ has derivatives of all orders (both in $t$ and along $\Sigma$) which are \emph{globally bounded}. Then the spectral triples $\big( C_c^\infty(\Sigma), L^2(\Sigma_t,\bS_t), \sD(t) \big)$ form a weakly differentiable family of spectral triples (as in \cref{defn:family}) parametrised by the real line $M=\R$. 
\end{prop}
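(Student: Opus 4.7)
The plan is to verify the three conditions of \cref{defn:family} for $M=\R$. The strategy throughout is to use the parallel-transport identifications $\tau_t^0\colon \mH_t\to\mH_0$ (suitably normalized to absorb the $t$-dependence of the volume forms) to regard every $\sD(t)$ as an unbounded operator on the single fixed Hilbert space $\mH:=\mH_0$, and to exploit the global $C^k$ bounds on $g_t$ to derive uniform-in-$t$ estimates.

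For the first two bullets of \cref{defn:family}, I would take $W:=H^1(\Sigma,\bS_0)$, the first Sobolev space associated to the initial complete Riemannian spin manifold $(\Sigma,g_0,\bS_0)$; the embedding $W\hookrightarrow\mH$ is locally compact by Rellich-Kondrachov (applied to compactly supported multipliers $f\in C_c^\infty(\Sigma)$). Under the parallel-transport identification, $\sD(t)$ becomes a smooth one-parameter family of first-order elliptic differential operators on $\Sigma$ whose coefficients are polynomial in $g_t$, $g_t^{-1}$, and the first derivatives of $g_t$ along $\Sigma$. Global boundedness of these quantities forces $g_t$ to be uniformly equivalent to $g_0$, so $\Dom\sD(t)=W$ for every $t$; uniform ellipticity together with uniform bounds on the principal symbol and lower-order terms then yields, via standard G\aa rding-type estimates, the required uniform equivalence of graph norms.

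For the third bullet, $\pi_t(f)$ is pointwise multiplication by $f\in C_c^\infty(\Sigma)$; under the parallel-transport identification this is independent of $t$, so $d\pi_\cdot(f)=0$ trivially, and continuity in operator norm of the commutator $[\sD(t),f]=\gamma_t(\mathrm{grad}_{g_t}f)$ is immediate from the smooth dependence of $g_t$ and $g_t^{-1}$ on $t$ together with the compact support of $f$. Differentiating $\sD(t)$ in $t$ produces a first-order operator whose coefficients involve at most the $t$-derivatives of $g_t$ and of its Christoffel symbols, and these are globally bounded on $\Sigma\times\R$ by hypothesis; hence $d\sD(t)\colon W\to\mH$ is uniformly bounded.

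The main obstacle is the technical bookkeeping underlying the strategy: turning the parallel-transport identification into a genuine unitary equivalence of Hilbert spaces (absorbing the $t$-dependent volume form as a strictly positive scalar factor multiplying sections), and tracking how the Levi-Civita connection of $g_t$ on $\Sigma_t$ transforms under this identification, so that the resulting one-parameter family on $\mH_0$ is manifestly a smooth family of first-order elliptic operators whose coefficients and their $t$- and $\Sigma$-derivatives are directly controlled by the global bounds on $g_t$. Once this calculus is set up, all three conditions of \cref{defn:family} follow without further surprises.
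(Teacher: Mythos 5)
Your overall strategy is the same as the paper's (identify every $\mH_t$ with $\mH := \mH_0$ via parallel transport along $\nu$ and convert the global bounds on $g_t$ into uniform-in-$t$ operator estimates), but you do not use the one structural fact that makes the proof short, and which the paper establishes immediately before the proposition: under this identification the Clifford multiplication $\gamma_t$ is \emph{independent of $t$}. Hence $\sD(t)-\sD(0)=\gamma_0\circ(\nabla^{\bS_t}-\nabla^{\bS_0})$ is a zeroth-order bundle endomorphism, globally bounded by the hypothesis on $g_t$; this gives at once $\Dom\sD(t)=\Dom\sD(0)$ and the uniform equivalence of all the graph norms, and it makes $[\sD(t),f]$ literally the $t$-independent operator of Clifford multiplication by $df$, so no continuity argument is needed for the third bullet. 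Correspondingly, the paper takes $W=\Dom\sD(0)$ with the graph inner product (locally compact embedding because $\sD(0)$ is elliptic), not the Sobolev space $H^1(\Sigma,\bS_0)$.

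This is where your version of the second bullet has a genuine soft spot. The inference ``global bounds force $g_t$ uniformly equivalent to $g_0$, so $\Dom\sD(t)=W$'' is not valid as stated: uniform equivalence of the metrics controls the principal symbols but not the spin connection terms (first derivatives of $g_t$) that enter $\sD(t)$, so domain equality does not follow from metric equivalence alone. Moreover, on a non-compact $\Sigma$ the identification $\Dom\sD(t)=H^1(\Sigma,\bS_0)$, uniformly in $t$, is not delivered by ``standard G\aa rding-type estimates'', which are local; one needs a genuinely global input (e.g.\ the Lichnerowicz formula together with globally bounded curvature, or a bounded-geometry argument) both to know $\Dom\sD(0)=H^1$ at all and to make the comparison uniform in $t$. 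The bounded-endomorphism observation above bypasses all of this. Your treatment of the remaining items is fine: the uniform bound on $d\sD(t)\colon W\to\mH$ via globally bounded $t$-derivatives of $g_t$ and its Christoffel symbols is in substance what the paper gets by citing \cite[Theorem 5.1]{BGM05}, and your remark about absorbing the $t$-dependent volume form to make the parallel-transport identification unitary is a fair point of bookkeeping which the paper glosses over; the resulting scalar density factor commutes with Clifford multiplication and contributes only a bounded multiplication operator, so it does not disturb the argument.
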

\begin{proof}
We define the Hilbert space $W := \Dom\sD(0)$ equipped with the graph inner product of $\sD(0)$. Then $W$ is continuously and densely embedded in $\mH := L^2(\Sigma_0,\bS_0)$. Since $\sD(0)$ is elliptic, this embedding is locally compact. 

Using the fact that $\gamma_t$ is independent of $t$ under the identification $L^2(\Sigma_t,\bS_t) \simeq \mH$, we can write $\sD(t) - \sD(0) = \gamma_0\circ(\nabla^{\bS_t}-\nabla^{\bS_0})$, which is a smooth endomorphism on $\bS_0$. The assumption that $g_t$ has globally bounded derivatives ensures that $\sD(t) - \sD(0)$ is globally bounded, and therefore the graph norms of $\sD(t)$ are uniformly equivalent.

For $f\in C_c^\infty(\Sigma)$, the commutator $[\sD(t),f]$ is given by Clifford multiplication with $df$. Hence, under the identification $L^2(\Sigma_t,\bS_t) \simeq \mH$, both $f$ and $[\sD(t),f]$ are independent of $t$. Lastly, since $g_t$ has globally bounded derivatives, it follows from \cite[Theorem 5.1]{BGM05} that the time-derivative of $\sD(t)$ is relatively bounded by $\sD(t)$ (and hence by $\sD(0)$). 
\end{proof}

By considering $\D_2 = -i\partial_x$ on $L^2(\R)$, \cref{thm:DS_triple} then yields the odd \emph{indefinite} spectral triple 
$$
\big( C_c^\infty(\Sigma\times\R), L^2(\R,\mH), \sD(\cdot) + \partial_t \big) ,
$$
describing the Dirac operator on the foliated spacetime $\Sigma\times\R$. In fact, this example provided our initial motivation to consider families of spectral triples, and we intend to study this approach to foliated spacetimes in more detail in a future work.

\newcommand{\MR}[1]{}


\begin{thebibliography}{{\noopsort{Dungen}}DPR13}

\bibitem[Bau81]{Baum81}
H.~Baum, \emph{{Spin-Strukturen und Dirac-Operatoren \"uber
  pseudo-Rie\-mann\-schen Mannigfaltigkeiten}}, {Teubner-Texte zur Mathematik},
  vol.~41, Teub\-ner-Verlag, Leipzig, 1981.

\bibitem[Ber68]{Ber68}
S.~J. Bernau, \emph{The square root of a positive self-adjoint operator}, J.
  Aust. Math. Soc. \textbf{8} (1968), 17--36.

\bibitem[BGM05]{BGM05}
C.~B{\"a}r, P.~Gauduchon, and A.~Moroianu, \emph{Generalized cylinders in
  semi-{R}iemannian and spin geometry}, Math. Z. \textbf{249} (2005), no.~3,
  545--580.

\bibitem[BJ83]{BJ83}
S.~{Baaj} and P.~{Julg}, \emph{{Th\'eorie bivariante de {K}asparov et
  op\'erateurs non born\'es dans les {$C^{\ast} $}-modules hilbertiens}}, C. R.
  Acad. Sci. Paris S\'er. I Math. \textbf{296} (1983), 875--878.

\bibitem[Bla98]{Blackadar98}
B.~Blackadar, \emph{K-theory for operator algebras}, 2nd ed., Mathematical
  Sciences Research Institute Publications, Cambridge University Press, 1998.

\bibitem[Con94]{Connes94}
A.~Connes, \emph{{Noncommutative Geometry}}, Academic Press, San Diego, CA,
  1994.

\bibitem[{\noopsort{Dungen}}DPR13]{vdDPR13}
K.~{\noopsort{Dungen}}{van den}~Dungen, M.~{Paschke}, and A.~{Rennie},
  \emph{{Pseudo-Riemannian spectral triples and the harmonic oscillator}}, J.
  Geom. Phys. \textbf{73} (2013), 37--55.

\bibitem[FMR14]{FMR14}
I.~{Forsyth}, B.~{Mesland}, and A.~{Rennie}, \emph{{Dense domains, symmetric
  operators and spectral triples}}, New York J. Math. \textbf{20} (2014),
  1001--1020.

\bibitem[GW13]{GaW13}
V.~{Gayral} and R.~{Wulkenhaar}, \emph{{Spectral geometry of the Moyal plane
  with harmonic propagation}}, J. Noncommut. Geom. \textbf{7} (2013), 939--979.

\bibitem[HR00]{Higson-Roe00}
N.~Higson and J.~Roe, \emph{{Analytic K-Homology}}, Oxford University Press,
  New York, 2000.

\bibitem[{Kas}80]{Kas80b}
G.~G. {Kasparov}, \emph{{The operator K-functor and extensions of
  {$C^{\ast}$}-algebras}}, Izv. Akad. Nauk SSSR \textbf{44} (1980), 571--636.

\bibitem[KL12]{KL12}
J.~Kaad and M.~Lesch, \emph{{A local global principle for regular operators in
  Hilbert-modules}}, J. Funct. Anal. \textbf{262} (2012), no.~10, 4540--4569.

\bibitem[KL13]{KL13}
\bysame, \emph{Spectral flow and the unbounded {K}asparov product}, Adv. Math.
  \textbf{248} (2013), 495--530. \MR{3107519}

\bibitem[Lan95]{Lance95}
E.~Lance, \emph{Hilbert {$C^{\ast} $}-modules: A toolkit for operator
  algebraists}, Lecture note series: London Mathematical Society, Cambridge
  University Press, 1995.

\bibitem[{Mes}14]{Mes14}
B.~{Mesland}, \emph{{Unbounded bivariant K-theory and correspondences in
  noncommutative geometry}}, J. Reine Angew. Math. \textbf{691} (2014), 1--244.

\bibitem[Ply86]{Ply86}
R.~Plymen, \emph{{Strong Morita equivalence, spinors and symplectic spinors}},
  Journal of Operator Theory \textbf{16} (1986), 305--324.

\bibitem[RS75]{Reed-Simon-II}
M.~Reed and B.~Simon, \emph{{Fourier Analysis, Self Adjointness}}, {Methods of
  Modern Mathematical Physics}, vol.~II, Academic Press, San Diego, CA, 1975.

\bibitem[Wul10]{Wul10}
R.~Wulkenhaar, \emph{Non-compact spectral triples with finite volume}, Quanta
  of Maths (E.~Blanchard, D.~Ellwood, M.~Khalkhali, M.~Marcolli, H.~Moscovici,
  and S.~Popa, eds.), Clay Mathematics Proceedings, vol.~10, American
  Mathematical Society, Providence, RI, 2010, pp.~617--648.

\end{thebibliography}

\providecommand{\noopsort}[1]{}\providecommand{\vannoopsort}[1]{}
\providecommand{\bysame}{\leavevmode\hbox to3em{\hrulefill}\thinspace}
\providecommand{\MR}{\relax\ifhmode\unskip\space\fi MR }
\providecommand{\MRhref}[2]{%
  \href{http://www.ams.org/mathscinet-getitem?mr=#1}{#2}
}
\providecommand{\href}[2]{#2}

\end{document}